\documentclass{amsart}

\usepackage[english]{babel} 
\usepackage{exscale}   
\usepackage{amsmath}
\usepackage{amsthm}
\usepackage{amsfonts}
\usepackage{amssymb}
\usepackage{amsxtra}
\usepackage{xspace}
\usepackage{epsfig}
\usepackage{latexsym}
\usepackage{graphics}
\usepackage{xcolor}
\usepackage{mathabx}

\usepackage{mathtools}
\usepackage{tikz}

\usepackage{tikz}
\usepackage{tikz-cd}
\usetikzlibrary{cd}

\usepackage{comment}

\newtheorem{theorem}{Theorem}[section]

\newtheorem{proposition}[theorem]{Proposition}
\newtheorem{corollary}[theorem]{Corollary}
\newtheorem{lemma}[theorem]{Lemma}
\newtheorem{definition}[theorem]{Definition}
\newtheorem{example}[theorem]{Example}

\newtheorem{prop}[theorem]{Proposition}
\newtheorem{defn}[theorem]{Definition}
\newtheorem{rem}[theorem]{Remark}
\newtheorem{rems}[theorem]{Remarks}
\newtheorem{examples}[theorem]{Examples}

\newcommand{\1}{\mbox{$1\!\!\!\:{\rm I}$}}

\newcommand{\BH}{\mbox{$\mathcal{B}(\mathcal{H})$}}

\newcommand{\Bset}{\mathbb{B}}
\newcommand{\Cset}{\mathbb{C}}

\newcommand{\Nset}{\mathbb{N}}

\newcommand{\Sset}{\mathbb{S}}

\newcommand{\Zset}{\mathbb{Z}} 

\newcommand{\CC}{\ensuremath{{\mathcal C}}\xspace}         
\newcommand{\CD}{\ensuremath{{\mathcal D}}\xspace}         
\newcommand{\CH}{\ensuremath{{\mathcal H}}\xspace}         
\newcommand{\CK}{\ensuremath{{\mathcal K}}\xspace}         

\newcommand{\NN}{\mathbb{N}}
\newcommand{\ZZ}{\mathbb{Z}}

\newcommand{\Ff}{\mathcal{F}}
\newcommand{\Hh}{\mathcal{H}}

\newcommand{\e}{\varepsilon}

\newcommand{\cat}{\Lambda}

\newcommand{\diff}{\partial}
\newcommand{\into}{\rightarrow}
\newcommand{\ip}[1]{\langle #1 \rangle}

\renewcommand{\root}[1]{\underline{#1}}
\newcommand{\ssc}{\Delta_{S}}

\DeclarePairedDelimiter{\card}{\lvert}{\rvert}
\DeclarePairedDelimiter{\rank}{\lvert}{\rvert}

\newcommand{\lev}[1]{\ell(#1)}

\DeclareMathOperator{\id}{id}
\DeclareMathOperator{\im}{im}

\DeclareMathOperator{\spn}{span}

\DeclareMathOperator{\solim}{sot-lim}

\tikzset{vertex/.style={inner sep=1pt, circle, fill=black}} 
\tikzset{lvertex/.style={inner sep=0.5pt, minimum width=10pt, circle,draw}} 
\tikzset{cvertex/.style={inner sep=0.5pt, circle, draw=red}} 
\tikzset{dedge/.style={-latex}} 
\tikzset{tdedge/.style={red, thick, -latex}} 
\tikzset{rdedge/.style={red,-latex}}
\tikzset{bdedge/.style={blue,-latex}}
\tikzset{gdedge/.style={green,-latex}}
\tikzset{cylch/.style={circle,blue,thick,draw,text=black,inner sep=1pt}}
\tikzset{colind/.style={->,blue,thick}}
\tikzset{colindl/.style={circle,draw,font=\tiny,inner sep=2pt,above}}
\tikzset{deltas/.style={midway, sloped, above, font=\tiny}}

\begin{document}

\title{Semi-Cosimplicial Hilbert Spaces with Isometric Coface Operators}

\author{D.~Gwion Evans}
\address{D.~Gwion Evans \\
Department of Mathematics \\ 
Aberystwyth University \\
Aberystwyth, SY23 3BZ, UK}
\email{dfe@aber.ac.uk}
\author{Rolf Gohm}
\address{Rolf Gohm \\
Department of Mathematics \\ 
Aberystwyth University \\
Aberystwyth, SY23 3BZ, UK}
\email{rog@aber.ac.uk}
\author{Claus K\"{o}stler}
\address{Claus K\"{o}stler\\
School of of Mathematical Sciences \\ 
University College Cork\\
Cork, Ireland}
\email{Claus@ucc.ie}

\begin{abstract}
Semi-cosimplicial objects in the category of Hilbert spaces with isometries which are motivated by non-commutative probability theory, in particular by the distributional symmetry of spreadability, are introduced and systematically developed in various directions: partial shifts, cohomology, Hessenberg form, a related graph, decomposition into labeled subspaces, representation theory of the infinite symmetric and braid groups, classification and extensions for semi-cosimplicial sets with injective coface maps and a toy version of spreadability. 
\end{abstract}

\maketitle

\section{Introduction}\label{S:intro}

In \cite{EGK17} we explained how the distributional symmetry of spreadability, in particular if we include the wider area of non-commutative probability, can be understood algebraically by considering semi-cosimplicial objects in the category of (non-commutative) probability spaces. We remind the reader that the semi-simplicial category $\Delta_S$ has as objects finite ordered sets, usually written as $[n] := \{0,1,\ldots,n\},\; n \in \Nset_0$, and the morphisms are all strictly increasing maps between these sets.
A covariant functor $F$ from the semi-simplicial category $\Delta_S$ to another category $\CC$ is called a semi-cosimplicial object (SCO) in $\CC$. We can work out a more explicit description of what a SCO is by noting that the morphisms of $\Delta_S$ are generated by the face maps
\[
\e_k \colon [n-1] \rightarrow [n],\quad
m \mapsto m \; \text{if}\;\; m<k,\; 
m \mapsto m+1 \; \text{if}\;\; m \ge k.
\]
Here $k=0,\ldots,n$ and $n \in \Nset$. Following the usual convention we omit the index $n$ in the notation
of the $\e_k$ and leave the domain and codomain to the context. The $\e_k$ satisfy the cosimplicial identities
\[
\e_j \e_i = \e_i \e_{j-1} \quad \text{if}\,\;i < j
\]
and these cosimplicial identities provide a presentation of the category $\Delta_S$. The functor $F$ takes $[n]$ to $F[n]$ and $\e_k$ to $\delta_k=F(\e_k): F[n-1] \rightarrow F[n]$. Hence we can say that
a semi-cosimplicial object (SCO) in the category $\CC$ is a sequence $(F_n)_{n\in \Nset_0}$ of objects in $\CC$ together with morphisms (coface operators)
\[
\delta_k: F_{n-1} \rightarrow F_n \quad\quad (k=0,\ldots,n)
\]
satisfying the cosimplicial identities
\[
\delta_j \delta_i = \delta_i \delta_{j-1} \quad \text{if}\,\;i < j\,.
\]
We refer to, for example, \cite{We94} for more information about (semi-co)simplicial objects and for a development of the rich and far developed theory built around them. 

As mentioned above, in \cite{EGK17} we have been mainly interested in SCOs in the category of (non-commutative) probability spaces and we are led to some deep questions, for example if there is always a representation of the braid group in the background (framed as braidability in \cite{GK09}). To make some progress about such questions we decided to investigate SCOs in a category which is easier but closely related to the category of (non-commutative) probability spaces, namely the category of Hilbert spaces. For example by applying the GNS-construction we can go from the former to the latter. From this motivation it is natural to use isometries as morphisms of Hilbert spaces. Another wider choice would have been contractions as morphisms. But it turned out that the more rigid choice of isometries leads to a very rich and interesting structure worth to consider on its own. This is what we want to do in this paper. It is impossible in a single paper to follow all the threads to their ends. So the philosophy is to establish in each section some basic direction and some results about it and to end with open questions and perspectives. Let us give short summaries of these topics. 

In Section 2 we define semi-cosimplical Hilbert spaces (SCHs) to be augmented SCOs in the category of Hilbert spaces with isometries, as motivated above, and establish some basic properties. We introduce partial shifts. A main result is the toy de Finetti theorem connecting saturatedness of the SCH with a shift property of the partial shifts. 

In Section 3 we look at the cohomology of SCHs. A basic observation is that the cohomology of a saturated SCH is always trivial. 

In Section 4 we establish a way to write partial shifts in a Hessenberg form which allows us to study how they can be obtained from unitary representations of the infinite braid group $\Bset_\infty$. This leads to the definition of braided SCHs. 

In Section 5 we introduce a category $\Lambda_S$ closely related to the semi-simplicial category $\Delta_S$ and the corresponding graph which visualises the dynamics in a SCH. 

In Section 6 we construct a decomposition of the SCH into what we call labeled subspaces. One of the main results is that braided saturated SCHs are characterized by the orthogonality of labeled subspaces. We give more characterizations and call these SCHs normal. 

In Section 7 we explain that the notion of a normal SCH really could have been invented as the structure of tame representations of the infinite symmetric group $\Sset_\infty$. We give an alternative proof of the main theorem about these, Lieberman's theorem, by our SCH methods. 

In Section 8 we address the question of extending non-normal to normal SCHs. We prove that for SCSs (semi-cosimplicial sets with injective coface maps) this is always possible and derive a classification. 

In Section 9 we define some toy version of spreadability which can be studied and classified by SCHs. 

More work needs to be done to obtain the full circle back to the motivating problems in non-commutative probability theory. 

\section{SCHs, Partial Shifts, Saturation and a Toy de Finetti Theorem}\label{sec:defs}

For $n \in \ZZ$ we let $\NN_n:=\{m \in \ZZ : m \ge n\}$ and define $[n]:=\{m \in \NN_0 : m \le n\}$, so that, in particular, $[-1]=\emptyset$.

\begin{definition}
A semi-cosimplicial Hilbert space $(\CH_k, \delta_i)_{k,i}$
(abbreviated: SCH) is a sequence $(\CH_k)_{k \in \NN_{-1}}$ of Hilbert spaces
together with isometries $\delta_i: \CH_{n-1} \rightarrow \CH_n$ 
(for all $n \in \Nset_0$ and $i=0,\ldots,n$) which satisfy
$\delta_j \delta_i = \delta_i \delta_{j-1}$ if $0 \le i < j \le n$.
\end{definition}
Note that $\delta_i$ also depends on $n$ but, as common in homological algebra, 
this is suppressed in the notation.

\begin{definition}
An SCH $(\widecheck{\CH}_k,\widecheck{\delta}_i)_{k,i}$ is a  sub-SCH of the SCH $(\CH_k, \delta_i)_{k,i}$ if $\widecheck{\CH}_k \subset \CH_k$ for all $k \ge -1$ and $\widecheck{\delta}_i = \delta_i |_{\widecheck{\CH}_{n-1}}$ (for all $n \in \Nset_0$ and $i=0,\ldots,n$).
\end{definition}

\begin{definition}\label{D:partial_shifts}
A sequence of isometries $(\alpha_n)_{n \in \Nset_0}$, with $\alpha_n: \CH \rightarrow \CH$ where $\CH$ is a Hilbert space, is called a sequence of partial shifts if $\alpha_j \alpha_i = \alpha_i \alpha_{j-1}$ for all $0 \le i < j$.	
\end{definition}	
This is related to SCHs as follows. Given an SCH
$(\CH_k, \delta_i)_{k,i}$ we interpret $\delta_n: \CH_{n-1} \rightarrow \CH_n$ as an embedding (for all $n \in \Nset_0$), hence we obtain a tower $\CH_{-1} \subset \CH_0 \subset \CH_1 \subset \ldots$. We can define a Hilbert space
$\CH_\infty$ as the closure of ${\bigcup_{k \ge -1}{\CH_k}}$
and maps $(\alpha_n)_{n \in \Nset_0}$ determined by $\alpha_n |_{\CH_k} := \delta_n |_{\CH_k}$ for all $k \ge n-1$. These are well defined on $\CH_\infty$ because this is compatible with the embeddings: if $\CH_k \ni x = \delta_{k+1}(x) \in \CH_{k+1}$ then, for all $k \ge n-1$, we have
$\alpha_n \delta_{k+1}(x) = \delta_n \delta_{k+1}(x) =
\delta_{k+2} \delta_n(x) = \delta_n(x) = \alpha_n(x)$ (because $\delta_n(x) \in \CH_{k+1}$, so 
$\delta_{k+2}$ is just an embedding).
Hence the $\alpha_n$ extend by continuity to isometries on $\CH_\infty$ and, inheriting the properties of the $\delta_n$, they form a sequence of partial shifts which we refer to as associated to the SCH.
Categorically this is a colimit (compare \cite{EGK17}). 

By definition $\alpha_n$ acts identically on $\CH_{n-1}$ (for all $n \in \Nset_0$) which we also write as $\CH_{n-1} \subset \CH_\infty^{\alpha_n}$ (the latter being the notation for the fixed point space of $\alpha_n$). For all $k \ge -1$ we have $\alpha_n (\CH_k) \subset \CH_{k+1}$ and we refer to this property as adaptedness with respect to the tower.

Conversely, if we start with a sequence of partial shifts $(\alpha_n)_{n \in \Nset_0}$ on a Hilbert space $\CH$ then from any tower $(\CH_k)_{k \ge -1}$ to which the $\alpha_n$ are adapted we get an SCH by defining coface operators $\delta_i: \CH_{n-1} \rightarrow \CH_n$ as $\delta_i := \alpha_i|_{\CH_{n-1}}$. If in addition $\CH_{n-1} \subset \CH^{\alpha_n}$ is valid for all $n$ then the sequence of partial shifts obtained by restricting the original sequence to (the invariant subspace) $\CH_\infty$ is associated to this SCH.

For later use we mention the natural notion of unitary equivalence for SCHs.
\begin{definition}
Two SCHs $(\CH_k, \delta_i)_{k,i}$ and $(\CH^\prime_k, \delta^\prime_i)_{k,i}$ are unitarily equivalent (isomorphic) if for all $k$ there are unitaries $U_k: \CH_k \rightarrow \CH^\prime_k$ such that $\delta^\prime_i U_{n-1} = U_n \delta_i$ (for all $n \in \Nset_0$ and $i=0,\ldots,n$). Or, equivalently, if there is a unitary $U$ from
$\CH_\infty$ to $\CH^\prime_\infty$ such that $U \CH_k = \CH^\prime_k$ for all $k$ and $\alpha^\prime_i U = U \alpha_i $ for all $i \in \Nset_0$.
\end{definition}

We see that it is rather a matter of convenience if we want to state results in terms of SCHs or of partial shifts together with an adapted tower $(\CH_k)_{k \ge -1}$ such that $\CH_{n-1} \subset \CH^{\alpha_n}$ for all $n$. Indeed we will often just choose the more convenient version in the following.

 
\begin{definition}
An SCH $(\CH_k,\delta_i)$ is called saturated if $\CH_{n-1} = \CH_\infty^{\alpha_n}$ for all $n \in \Nset_0$, where $(\alpha_n)_{n \in \Nset_0}$ is the associated sequence of partial shifts. 	
\end{definition}



\begin{proposition}\label{P:saturation}	
Any sequence $(\alpha_n)_{n \in \Nset_0}$ of partial shifts is associated to the saturated SCH $(\CH_k, \delta_i)_{k,i}$ given by $\CH_{k} := \CH^{\alpha_{k+1}}$
for all $k$.
\end{proposition}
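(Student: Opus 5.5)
The plan is to check, in order, the hypotheses of the converse construction described after Definition~\ref{D:partial_shifts}. These are that $(\CH_k)_{k\ge -1}$ with $\CH_k:=\CH^{\alpha_{k+1}}$ is a tower of closed subspaces, that the $\alpha_n$ are adapted to it, and that $\CH_{n-1}\subset \CH^{\alpha_n}$. Once these hold, that construction produces an SCH with coface operators $\delta_i:=\alpha_i|_{\CH_{n-1}}$, whose associated partial shifts are the restrictions of the $\alpha_n$ to $\CH_\infty=\overline{\bigcup_k \CH_k}$. Finally I will verify saturation. Each $\CH^{\alpha_{k+1}}=\ker(\alpha_{k+1}-\1)$ is a closed subspace, so it is a Hilbert space.

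First I show the tower property $\CH^{\alpha_n}\subset\CH^{\alpha_{n+1}}$. Take $x$ with $\alpha_n x=x$. The relation $\alpha_j\alpha_i=\alpha_i\alpha_{j-1}$ with $i=n$, $j=n+1$ gives $\alpha_{n+1}\alpha_n=\alpha_n\alpha_n$. Hence
\[
\alpha_{n+1}x=\alpha_{n+1}\alpha_n x=\alpha_n\alpha_n x=x .
\]
By induction, $\CH_k\subset\CH_m$ for all $k\le m$.

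Next I check adaptedness, $\alpha_n(\CH_k)\subset\CH_{k+1}$, by splitting into two cases. Let $x\in\CH_k$, so $\alpha_{k+1}x=x$.
\begin{itemize}
\item If $n\le k+1$, the partial shift relation with $i=n<j=k+2$ gives $\alpha_{k+2}\alpha_n x=\alpha_n\alpha_{k+1}x=\alpha_n x$. Thus $\alpha_n x\in\CH^{\alpha_{k+2}}=\CH_{k+1}$.
\item If $n\ge k+2$, the tower property gives $x\in\CH^{\alpha_n}$. Then $\alpha_n x=x\in\CH_k\subset\CH_{k+1}$.
\end{itemize}
The condition $\CH_{n-1}\subset\CH^{\alpha_n}$ holds with equality by definition. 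So the converse construction applies and yields an SCH $(\CH_k,\delta_i)_{k,i}$. Its associated sequence of partial shifts is $(\alpha_n|_{\CH_\infty})_n$, where $\CH_\infty$ is invariant under every $\alpha_n$ by adaptedness and continuity.

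For saturation, the fixed space of the restricted shift is $\CH_\infty^{\alpha_n}=\CH^{\alpha_n}\cap\CH_\infty$. Since $\CH_{n-1}=\CH^{\alpha_n}\subset\CH_\infty$, this intersection equals $\CH_{n-1}$, which is exactly saturation.

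The only point needing care is interpretive rather than computational. In general $\CH_\infty$ may be a proper subspace of $\CH$, so ``associated'' must be read in the sense fixed earlier in the paper, namely as the restriction of the sequence to the invariant subspace $\CH_\infty$. I will state this explicitly in the proof. The genuine content is the case split in the adaptedness step, and in particular the use of the tower property in the case $n\ge k+2$.
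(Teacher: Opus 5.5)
Your proof is correct and follows the paper's argument: you get the tower property from $\alpha_{n+1}\alpha_n=\alpha_n^2$, and adaptedness from $\alpha_{k+2}\alpha_n=\alpha_n\alpha_{k+1}$ for small $n$ and from the tower property for large $n$. The differences are cosmetic: you put the boundary case $n=k+1$ in the first case rather than the second, and you spell out saturation and the restriction to $\CH_\infty$, which the paper leaves implicit.
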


\begin{proof}
Given a sequence $(\alpha_n)_{n \in \Nset_0}$ of partial shifts we put 
$\CH_{n-1} := \CH^{\alpha_n}$ for all $n \in \Nset_0$. Fix $m \in \NN_0$ and suppose $x \in \CH^{\alpha_m}$. Then $\alpha_{m+1}(x) = \alpha_{m+1}\alpha_m(x) = \alpha^2_m(x) = x$.
Hence $x \in \CH^{\alpha_{m+1}}$ and, by induction, $x \in \CH^{\alpha_n}$ for all $n \ge m$. On the other hand, if $n < m$ then $\alpha_{m+1} \alpha_n(x) = \alpha_n \alpha_m(x) = \alpha_n(x)$ and hence  $\alpha_n(x) \in \CH^{\alpha_{m+1}}$. This proves adaptedness. 
\end{proof}

\begin{corollary}\label{C:saturation}
Every SCH $(\CH_k, \delta_i)_{k,i}$ is a sub-SCH of a saturated SCH, namely the saturated SCH given in Proposition \ref{P:saturation} for the associated partial shifts.
\end{corollary}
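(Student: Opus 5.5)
The plan is to use the associated sequence of partial shifts $(\alpha_n)_{n \in \Nset_0}$ on $\CH_\infty$ and to compare the given tower with the tower of fixed point spaces. Put $\widehat{\CH}_{k} := \CH_\infty^{\alpha_{k+1}}$ for $k \ge -1$. By Proposition \ref{P:saturation}, the partial shifts $(\alpha_n)$ are associated to the saturated SCH $(\widehat{\CH}_k, \widehat{\delta}_i)_{k,i}$, whose coface operators are $\widehat{\delta}_i := \alpha_i|_{\widehat{\CH}_{n-1}}$. It then remains to check the two requirements in the definition of a sub-SCH.

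\emph{Inclusion of spaces.} We need $\CH_k \subset \widehat{\CH}_k$ for all $k \ge -1$. This is exactly the property, noted right after the construction of the associated partial shifts, that $\alpha_{k+1}$ acts identically on $\CH_k$, i.e.\ $\CH_k \subset \CH_\infty^{\alpha_{k+1}}$. Here $\CH_k$ is identified with its image in $\CH_\infty$ under the embeddings $\delta_{n}$ of the tower, as the paper does throughout.

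\emph{Compatibility of coface operators.} We need $\widehat{\delta}_i|_{\CH_{n-1}} = \delta_i$ for $n \in \Nset_0$ and $0 \le i \le n$. By definition $\widehat{\delta}_i|_{\CH_{n-1}} = \alpha_i|_{\CH_{n-1}}$. Since $n-1 \ge i-1$, the defining relation $\alpha_i|_{\CH_k} = \delta_i|_{\CH_k}$ for $k \ge i-1$ gives $\alpha_i|_{\CH_{n-1}} = \delta_i$. The two sides are read inside $\CH_n \subset \widehat{\CH}_n$, which is consistent with adaptedness.

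The only point that needs care is the identification of the original spaces with subspaces of $\CH_\infty$. The rest is direct unwinding of definitions, so I expect no real obstacle: the argument is essentially the observation that $\CH_{n-1} \subset \CH_\infty^{\alpha_n}$, combined with Proposition \ref{P:saturation}.
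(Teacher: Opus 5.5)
Your proposal is correct and is essentially the paper's argument. The paper gives no separate proof: the corollary follows from Proposition \ref{P:saturation} together with the observation after Definition \ref{D:partial_shifts} that $\CH_{n-1} \subset \CH_\infty^{\alpha_n}$ and $\delta_i = \alpha_i|_{\CH_{n-1}}$, and these are exactly the two sub-SCH conditions you verify.
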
 

\begin{definition}
  Given an SCH $(\CH_k, \delta_i)_{k,i}$,  the saturated SCH in Corollary \ref{C:saturation} is called the saturation of $(\CH_k,\delta_i)_{k,i}$; we denote it by $(\hat{\CH}_k,\hat{\delta}_i)_{k,i}$.
\end{definition}

\begin{rems}
   The saturation $(\hat{\CH}_k,\hat{\delta}_i)_{k,i}$ of an SCH
   $(\CH_k,\delta_i)_{k,i}$:
   \begin{enumerate}
  \item has the same ambient Hilbert space, i.e.
    $\CH_\infty:=\overline{\bigcup_{k \ge
        -1}{\CH_k}}=\overline{\bigcup_{k \ge
        -1}\hat{\CH}_k}=:\hat{\CH}_\infty$, and the same associated
    sequence of partial shifts
    $(\alpha_n)_{n \in \Nset_0}=(\hat{\alpha}_n)_{n \in \Nset_0}$;
  \item is the smallest saturated SCH containing $(\CH_k,\delta_i)_{k,i}$ as a sub-SCH, in the sense that it is a sub-SCH of any other saturated SCH that contains $(\CH_k,\delta_i)_{k,i}$ as a sub-SCH.
  \end{enumerate}

\end{rems}

\begin{definition}
Given an SCH $(\CH_k, \delta_i)_{k,i}$we call
\[
\CD_{-1} := \CH_{-1},\; \CD_n := \CH_n \ominus \CH_{n-1} \;\; \text{for} \; n \in \Nset_0
\]	
the innovation spaces of the SCH.
\end{definition}
\begin{rem}
We have that $\CH_\infty := \overline{\bigcup_{k \ge -1}{\CH_k}} = \bigoplus_{k \ge -1} \CD_k$.
\end{rem}

It will be useful for us to consider localised versions of saturation.

\begin{definition}
  We say that an SCH $(\CH_k,\delta_i)_{k,i}$ is
  \begin{enumerate}
  \item saturated at level $n$ if
    $\CH_n=\hat{\CH}_n:=\CH_\infty^{\alpha_{n+1}}$;
    \item saturated at level $n$ up to level $m$ if $\CH_n=\hat{\CH}_n\cap \CH_m$.
\end{enumerate}
\end{definition}

For the remainder of this section $(\CH_k,\delta_i)_{k,i}$ will denote an arbitrary SCH with sequence of innovation spaces $(\CD_k)_k$. Its saturation will be denoted by $(\hat{\CH}_k, \hat{\delta}_i)_{k,i}$; the sequence of innovation spaces of the saturation will be denoted by $(\hat{D}_k)_k$. For each $n \in \NN_{-1}$, we let $P_n$ and $\hat{P}_n$ be the orthogonal projections on $\CH_\infty=\hat{\CH}_\infty$ onto $\CH_n$ and $\hat{\CH}_n$, respectively.

\begin{lemma}\label{L:cosimplicial_ids_for_proj}
  For each $i,n \in \NN_0$ with  $i \le n$, we have $\hat{P}_n\alpha_i=\alpha_i\hat{P}_{n-1}$.
\end{lemma}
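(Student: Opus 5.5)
We want $\hat P_n\alpha_i=\alpha_i\hat P_{n-1}$ for $0\le i\le n$, where $\hat{\CH}_{n-1}=\CH_\infty^{\alpha_n}$ and $\hat{\CH}_n=\CH_\infty^{\alpha_{n+1}}$. The plan is to reduce the identity to a statement about fixed point spaces of isometries. For an isometry $\alpha$ the fixed point space equals $\ker(\alpha-1)=\ker(\alpha^*-1)$, since $\alpha x=x$ gives $\alpha^*x=\alpha^*\alpha x=x$, and conversely $\alpha^*x=x$ gives $\|\alpha x-x\|^2=2\|x\|^2-2\,\mathrm{Re}\langle \alpha x,x\rangle=0$. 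Hence $\CH_\infty^{\alpha}$ reduces both $\alpha$ and $\alpha^*$. The strategy is to use the relation $\alpha_{n+1}\alpha_i=\alpha_i\alpha_n$ (valid since $i<n+1$) together with the corresponding adjoint relation.

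The first step establishes $\alpha_i\hat{\CH}_{n-1}\subset\hat{\CH}_n$. This is exactly the adaptedness argument in the proof of Proposition \ref{P:saturation}: if $\alpha_n x=x$ then $\alpha_{n+1}\alpha_i x=\alpha_i\alpha_n x=\alpha_i x$. It follows that $\hat P_n\alpha_i\hat P_{n-1}=\alpha_i\hat P_{n-1}$.

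The second step, which is the main point, shows $\hat P_n\alpha_i(1-\hat P_{n-1})=0$. Equivalently, $\alpha_i^*\hat{\CH}_n\subset\hat{\CH}_{n-1}$, i.e. $\hat P_{n-1}\alpha_i^*\hat P_n=\alpha_i^*\hat P_n$; taking adjoints then gives $\hat P_n\alpha_i\hat P_{n-1}=\hat P_n\alpha_i$. To prove this, take $y$ with $\alpha_{n+1}y=y$, so also $\alpha_{n+1}^*y=y$. Adjoining the cosimplicial relation gives $\alpha_i^*\alpha_{n+1}^*=\alpha_n^*\alpha_i^*$, hence $\alpha_n^*(\alpha_i^*y)=\alpha_i^*\alpha_{n+1}^*y=\alpha_i^*y$. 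Thus $\alpha_i^*y$ is fixed by $\alpha_n^*$, and by the isometry observation above it is fixed by $\alpha_n$, i.e. $\alpha_i^*y\in\hat{\CH}_{n-1}$.

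Combining the two steps gives $\hat P_n\alpha_i=\hat P_n\alpha_i\hat P_{n-1}=\alpha_i\hat P_{n-1}$. The only delicate point is the equality of fixed spaces of $\alpha$ and $\alpha^*$ for isometries, which relies on equality in Cauchy–Schwarz, and I expect it to be routine. The case $n=0$ with $\hat{\CH}_{-1}=\CH_\infty^{\alpha_0}$ is covered by the same argument, since $i=0<1$.
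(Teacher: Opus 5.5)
Your proof is correct, but it takes a different route from the paper. The paper argues in one line via von Neumann's mean ergodic theorem. It writes $\hat P_{n-1}$ as the strong limit of the Ces\`aro means $\frac{1}{N}\sum_{m=0}^{N-1}\alpha_n^m$. The cosimplicial identity gives $\alpha_{n+1}^m\alpha_i=\alpha_i\alpha_n^m$ for all $m$, so $\alpha_i$ intertwines the averages of $\alpha_{n+1}$ and $\alpha_n$, and hence also the limiting projections.

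You instead split the identity into the two inclusions $\alpha_i\hat{\CH}_{n-1}\subset\hat{\CH}_n$ and $\alpha_i^*\hat{\CH}_n\subset\hat{\CH}_{n-1}$. For the second you need only the elementary fact that an isometry (indeed any contraction) and its adjoint have the same fixed vectors. That fact is the key ingredient in the usual proof of the mean ergodic theorem, but you use it without any limiting argument.

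Given the ergodic theorem as a black box, the paper's version is shorter and handles both directions in a single computation. Yours is fully elementary, which shows that the ergodic theorem can be dispensed with here even for general SCHs; the remark following Theorem \ref{T:scs-saturation} makes this point only for SCSs. Your argument also isolates the statement $\alpha_i^*\hat{\CH}_n\subset\hat{\CH}_{n-1}$, equivalently $\alpha_i(\hat{\CH}_{n-1}^\perp)\subset\hat{\CH}_n^\perp$. This is exactly what the paper later extracts from Theorem \ref{T:saturation} in the proof of Lemma \ref{adjoint}.
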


\begin{proof}
 Recalling that for each $n \in \NN_0$, $\hat{\CH}_{n-1}=\CH_\infty^{\alpha_{n}}$, we have, by von Neumann's mean ergodic theorem, $\hat{P}_{n-1}=\solim_{N \to \infty}\frac{1}{N} \sum_{m=0}^{N-1} \alpha^m_{n}$. It now follows from the cosimplicial identities that $\hat{P}_n\alpha_i=\alpha_i\hat{P}_{n-1}$ for each $0\le i \le n$.
\end{proof}

\begin{theorem}(`toy de Finetti')\label{T:saturation}
  Let $(\CH_k,\delta_i)_{k,i}$ be an SCH. For each $n \in \NN_0$,
  \begin{align*}
    & \alpha_n(\CD_k) \subset \CD_{k+1} \text{ for all } k \ge n \\ 
    \implies & (\CH_k,\delta_i)_{k,i}\text{ is saturated at level $n-1$} \\
    \implies & \alpha_i(\CD_n)\subset \CD_{n+1} \text{ for all } 0\le i \le n.
  \end{align*}
  The following assertions are equivalent:
  \begin{enumerate}
      \item The SCH $(\CH_k,\delta_i)_{k,i}$ is saturated.
      \item $\alpha_i(\CD_n)\subset \CD_{n+1}$ for all $i,n \in \Nset_0$ such that $0 \le i \le n$.
      \item $\alpha_n(\CD_n)\subset \CD_{n+1}$ for all $n \in \Nset_0$.
  \end{enumerate}
\end{theorem}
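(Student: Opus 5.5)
Fix $n \in \Nset_0$; both implications are proved at level $n$. Two basic facts are used throughout. First, $\alpha_n$ acts as the identity on $\CH_{n-1}$, and $\CH_{n-1} \subset \hat{\CH}_{n-1} = \CH_\infty^{\alpha_n}$. Second, $\alpha_i$ is adapted, so $\alpha_i \CH_k \subset \CH_{k+1}$.

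\textbf{First implication.} Assume $\alpha_n(\CD_k) \subset \CD_{k+1}$ for all $k \ge n$. Take $x \in \CH_\infty^{\alpha_n}$ and decompose it as $x = P_{n-1}x + \sum_{k \ge n} x_k$ with $x_k \in \CD_k$. Since $\alpha_n$ fixes $P_{n-1}x$, we get
\[
\sum_{k \ge n} x_k = \sum_{k\ge n}\alpha_n x_k .
\]
The left side has $x_n \in \CD_n$ as its $\CD_n$-component. The right side has no $\CD_n$-component, because $\alpha_n x_k \in \CD_{k+1}$. Hence $x_n = 0$. Comparing $\CD_{k+1}$-components gives $x_{k+1} = \alpha_n x_k$, so inductively $x_k = 0$ for all $k \ge n$. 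Therefore $x \in \CH_{n-1}$, i.e.\ $\CH_{n-1} = \hat{\CH}_{n-1}$.

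\textbf{Second implication.} Assume $\CH_{n-1} = \hat{\CH}_{n-1}$, and take $0 \le i \le n$ and $y \in \CD_n$. Adaptedness gives $\alpha_i y \in \CH_{n+1}$, so it remains to show $\alpha_i y \perp \CH_n$. Since $\alpha_i$ is an isometry, it suffices to show $\alpha_i^* P_n \alpha_i y = 0$. Lemma \ref{L:cosimplicial_ids_for_proj} gives $\hat{P}_n \alpha_i = \alpha_i \hat{P}_{n-1}$. By hypothesis $\hat{P}_{n-1} = P_{n-1}$, and $P_{n-1} y = 0$, so $\hat{P}_n \alpha_i y = 0$. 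Because $\CH_n \subset \hat{\CH}_n$, it follows that $P_n \alpha_i y = P_n \hat{P}_n \alpha_i y = 0$, and so $\alpha_i y \in \CD_{n+1}$. This uses saturation only at level $n-1$, exactly as stated.

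\textbf{Equivalences.} (1)$\Rightarrow$(2) follows from the second implication applied at every $n$. (2)$\Rightarrow$(3) is trivial. For (3)$\Rightarrow$(1), I need the hypothesis of the first implication, namely $\alpha_n(\CD_k) \subset \CD_{k+1}$ for all $k \ge n$, from only the diagonal statements $\alpha_k(\CD_k) \subset \CD_{k+1}$. This is the main obstacle. The natural route is induction on the level: prove saturation at levels $-1, 0, 1, \dots$ in turn and use the second implication to propagate.

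At level $-1$ the first implication needs $\alpha_0(\CD_k) \subset \CD_{k+1}$ for all $k \ge 0$, but (3) only gives $k = 0$. So instead I will prove saturation at all levels $\le m$ by a simultaneous induction. Suppose $\CH_{j-1} = \hat{\CH}_{j-1}$ for all $j \le m$. The second implication then yields $\alpha_i(\CD_j) \subset \CD_{j+1}$ for all $i \le j \le m$. I want to show $\hat{\CH}_m = \CH_m$.

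Take $x \in \hat{\CH}_m$ and write $x = P_m x + z$ with $z \perp \CH_m$. Both $x$ and $P_m x$ need to be $\alpha_{m+1}$-fixed; the first is, and for the second I would use the commutation $\hat{P}_m \alpha_i = \alpha_i \hat{P}_{m-1}$ with $\hat{P}_{m-1} = P_{m-1}$. From this, $\CH_m = \CH_{m-1} + \sum_{i\le m}\alpha_i \CD_{m-1}$ should span $\CH_m$ up to pieces of $\hat{\CH}_m$. This last claim is not valid in general, so I do not expect it to close the argument directly.

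As a fallback, I will try a direct argument. Apply $\alpha_{m+1}$ to $z$, use $\alpha_{m+1}(\CD_{m+1}) \subset \CD_{m+2}$ together with $\alpha_{m+1}\alpha_i = \alpha_i \alpha_m$ to push components upward, and run the norm-decrease comparison from the first implication to force $z = 0$. Controlling the higher components of $z$ at this step is where I expect the real difficulty to lie.
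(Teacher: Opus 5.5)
Your two localised implications and the steps (1)$\Rightarrow$(2)$\Rightarrow$(3) are correct and follow the paper's argument. The gap is (3)$\Rightarrow$(1), which you leave open. The bottom-up induction you sketch does not get off the ground, because saturation at level $n-1$ is not determined by the diagonal conditions at levels $\le n$. In Example \ref{E:basic_examples}(3) one has $\CD_0=\{0\}$, so $\alpha_0(\CD_0)\subset\CD_1$ holds trivially, yet $\CH_{-1}=\{0\}\neq\Cset=\CH_\infty^{\alpha_0}$. So even your base case needs information from all higher levels, and your fallback (pushing components of $z$ upward) is not yet an argument.

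The missing idea is to pass through (2) one level at a time, and only then apply the first implication globally. Your proof of the second implication used saturation at level $n-1$ only to get $\hat{P}_{n-1}y=0$ for $y\in\CD_n$. This follows from $\alpha_n(\CD_n)\subset\CD_{n+1}$ alone. Write $y=w+z$ with $w=\hat{P}_{n-1}y$ and $z\in\hat{\CD}_n$; this is possible since $y\in\CH_n\subset\hat{\CH}_n$. We have $\alpha_n y\in\CD_{n+1}\perp y$ and $\alpha_n w=w$. Moreover $\ip{w,\alpha_n z}=\ip{\alpha_n w,\alpha_n z}=\ip{w,z}=0$ and $\ip{z,\alpha_n w}=\ip{z,w}=0$. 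Hence
\[
0=\ip{y,\alpha_n y}=\|w\|^2+\ip{z,\alpha_n z}.
\]
The saturation is saturated and has the same partial shifts, so your second implication applied to it gives $\alpha_n(\hat{\CD}_n)\subset\hat{\CD}_{n+1}$. Therefore $\ip{z,\alpha_n z}=0$ and $w=0$. Your computation $P_n\alpha_i y=P_n\hat{P}_n\alpha_i y=P_n\alpha_i\hat{P}_{n-1}y=0$ now yields $\alpha_i(\CD_n)\subset\CD_{n+1}$ for all $0\le i\le n$. This is Lemma \ref{L:one4all}, i.e.\ (3)$\Rightarrow$(2) levelwise. Finally, (2)$\Rightarrow$(1) is immediate from your first implication: (2) at level $k$ with $i=n\le k$ gives $\alpha_n(\CD_k)\subset\CD_{k+1}$ for all $k\ge n$, hence saturation at level $n-1$ for every $n\in\Nset_0$. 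No induction over levels is needed.
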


\begin{proof}
  Let $n \in \NN_0$. Suppose that $\alpha_n(\CD_k) \subset \CD_{k+1}$ for all $k \ge n$. For $x \in \CH_\infty^{\alpha_n}$ write $x=y+z$ where $y\in \CH_{n-1}$ and $z\in \CH_\infty^{\alpha_n} \ominus \CH_{n-1}$ (noting $y\in \CH_{n-1} \subset \CH_\infty^{\alpha_n}$). As $z \in \bigoplus_{j\ge n } \CD_{j} \cap \CH_\infty^{\alpha_n}$ we see that $z=\alpha_n(z)$ implies $z=0$ (since from $n+1$ onwards, each component of $z$ is equal to the image of its predecessor under the partial shift, and the component in $\CD_n$ is zero). Hence $x \in \CH_{n-1}$. This shows saturatedness at level $n-1$.
  Now suppose that $\CH_{n-1}=\CH_{\infty}^{\alpha_n}$, 
  so $P_{n-1}=\hat{P}_{n-1}$. 
  For each $0 \le i \le n$ and $x \in \CD_n$ (hence $P_{n-1}(x)=0$) we have, by Lemma \ref{L:cosimplicial_ids_for_proj},
  \[ P_n \alpha_i(x) = P_n \hat{P}_n \alpha_i(x) = P_n \alpha_i \hat{P}_{n-1}(x)=P_n\alpha_iP_{n-1}(x) = 0.\] Thus $\alpha_i(x) \in \CH_{n+1}\ominus \CH_{n}=\CD_{n+1}$. Hence $\alpha_i(\CD_n)\subset \CD_{n+1}$. We have proved the implications for the localised statements. But now we observe that the final statement valid for all $n \in \Nset_0$ is identical to the first statement for all $n \in \Nset_0$, so we immediately get the equivalence of (1) and (2). It is obvious that (2) implies (3).
 We can deduce (2) from (3) from a localised version which we state in Lemma \ref{L:one4all}. 
\end{proof}

  The converses of the localised implications in Theorem \ref{T:saturation} do not hold in general, as demonstrated in Examples \ref{E:basic_examples} below.
As an explanation why we consider Theorem \ref{T:saturation} as a toy version of the probabilistic de Finetti theorem classifying spreadable sequences of random variables we recommend to look at Theorem \ref{C} below where it is used to obtain a classification of a Hilbert space version of spreadability.

We observe in the following lemma that it is sufficient to check that the $n^\text{th}$ partial shift shifts the $n^\text{th}$ innovation space to ensure that all lower partial shifts do so also. In particular, we use this observation when considering the cohomology of an SCH in Section \ref{S:cohomology}.

\begin{lemma}\label{L:one4all}
  
For each $n\in \NN_0$, if $\alpha_n(\CD_n) \subset \CD_{n+1}$ then $\alpha_i(\CD_n) \subset \CD_{n+1}$ for all $0\le i \le n$.
    \end{lemma}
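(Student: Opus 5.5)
The inclusion $\alpha_i(\CD_n)\subset\CH_{n+1}$ is free, because $\CD_n\subset\CH_n$ and the partial shifts are adapted to the tower. So the whole task is to show that $\alpha_i(x)\perp\CH_n$ for $x\in\CD_n$ and $0\le i\le n$. The case $i=n$ is the hypothesis, so I fix $i<n$, $x\in\CD_n$ and $y\in\CH_n$, and aim to show $\langle\alpha_i x,y\rangle=0$. The idea is to move the hypothesis $\alpha_n x\perp\CH_n$ over to $\alpha_i$ using the cosimplicial identities and the fact that $\CH_n\subset\CH_\infty^{\alpha_{n+1}}$.

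First I use that $\alpha_{n+1}$ is an isometry fixing $y$, together with the identity $\alpha_{n+1}\alpha_i=\alpha_i\alpha_n$, which holds since $i<n+1$:
\[
\langle \alpha_i x, y\rangle=\langle \alpha_{n+1}\alpha_i x,\alpha_{n+1}y\rangle=\langle \alpha_i\alpha_n x, y\rangle .
\]
Iterating gives $\langle\alpha_i x,y\rangle=\langle\alpha_i\alpha_n^m x,y\rangle$ for every $m$. Averaging over $m$ and applying the mean ergodic theorem, as in the proof of Lemma \ref{L:cosimplicial_ids_for_proj}, gives
\[
\langle\alpha_i x,y\rangle=\langle\alpha_i\hat P_{n-1}x,y\rangle .
\]
The same formula also follows directly from Lemma \ref{L:cosimplicial_ids_for_proj}, since $y\in\CH_n\subset\hat\CH_n$ gives $\langle\alpha_i x,y\rangle=\langle\hat P_n\alpha_i x,y\rangle=\langle\alpha_i\hat P_{n-1}x,y\rangle$. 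It therefore suffices to prove that $\hat P_{n-1}x=0$, i.e. that $\CD_n\perp\CH_\infty^{\alpha_n}$.

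This reduction is the step I expect to be the main obstacle. For $z\in\CH_\infty^{\alpha_n}$ we have $\langle x,z\rangle=\langle\alpha_n x,z\rangle$, and $\alpha_n x\in\CD_{n+1}$ is orthogonal to $\CH_n$. So what is needed is that the component of $z$ in $\bigoplus_{k\ge n+1}\CD_k$ has zero $\CD_{n+1}$-part against $\alpha_n x$.

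My first attempt is to decompose $z=P_{n-1}z+z'$. Since $\CH_{n-1}\subset\CH_\infty^{\alpha_n}$, the vector $z'=z-P_{n-1}z$ is again $\alpha_n$-fixed and lies in $\bigoplus_{k\ge n}\CD_k$. Then $\langle x,z\rangle=\langle x,P_nz'\rangle$. Comparing the $\CD_n$- and $\CD_{n+1}$-components of $z'=\alpha_n z'$, using that $\alpha_n$ maps $\CD_n$ isometrically into $\CD_{n+1}$ and maps $\CH_{n-1}$ to itself, I hope to derive $P_n z'=0$. This would be a one-level version of the argument in the first implication of Theorem \ref{T:saturation}.

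If that comparison does not close, because the higher components of $z'$ can also feed into $\CD_{n+1}$ under $\alpha_n$, I will fall back on a different route. I would avoid $\hat P_{n-1}$ and instead work with $\alpha_i^*y$, using $\alpha_n\alpha_i=\alpha_i\alpha_{n-1}$ for $i<n$ to show that $\alpha_i^*$ maps $\CH_n$ into a space orthogonal to $\alpha_n\CD_n$.
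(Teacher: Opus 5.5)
Your reduction to $\hat{P}_{n-1}x=0$ for $x\in\CD_n$ is correct, and it is the same reduction the paper makes with Lemma \ref{L:cosimplicial_ids_for_proj}. Note, however, that it holds in every SCH and never uses the hypothesis. So the whole content of the lemma is the claim $\CD_n\perp\CH_\infty^{\alpha_n}$, and that claim you leave unproved.

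Your first attempt cannot close. Write $z'=z'_n+z''$ with $z'_n\in\CD_n$ and $z''\in\bigoplus_{k\ge n+1}\CD_k$. The hypothesis puts $\alpha_n z'_n$ into $\CD_{n+1}$, but it says nothing about $\alpha_n$ on $\CD_k$ for $k\ge n+1$. Comparing $\CD_n$-components of $z'=\alpha_n z'$ therefore only yields $z'_n=(P_n-P_{n-1})\alpha_n z''$, which gives no conclusion by itself; partial shifts of a non-saturated SCH can move higher innovation vectors down (cf.\ Example \ref{E:basic_examples}(2)). The component-chasing in the first implication of Theorem \ref{T:saturation} works only because there $\alpha_n(\CD_k)\subset\CD_{k+1}$ is assumed for \emph{all} $k\ge n$. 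Your fallback via $\alpha_i^*$ merely restates the goal ($\alpha_i^*y\perp\CD_n$ for $y\in\CH_n$) and gives no way for the hypothesis to enter.

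The missing idea is to decompose $x$ instead of the fixed vector, and to use the hypothesis in the form $\langle x,\alpha_n x\rangle=0$ (since $x\in\CD_n$ and $\alpha_n x\in\CD_{n+1}$). Because $x\in\CH_n\subset\hat{\CH}_n$ and $\hat{\CH}_{n-1}=\CH_\infty^{\alpha_n}$, you can write $x=y+w$ with $y=\hat{P}_{n-1}x$ and $w\in\hat{\CD}_n=\hat{\CH}_n\ominus\hat{\CH}_{n-1}$. Since $\alpha_n y=y$ and $\langle y,\alpha_n w\rangle=\langle \alpha_n y,\alpha_n w\rangle=\langle y,w\rangle=0$, the cross terms vanish and
\[
0=\langle x,\alpha_n x\rangle=\|y\|^2+\langle w,\alpha_n w\rangle .
\]
Now apply Lemma \ref{L:cosimplicial_ids_for_proj} with $i=n$: it gives $\hat{P}_n\alpha_n w=\alpha_n\hat{P}_{n-1}w=0$, so $\alpha_n w\perp\hat{\CH}_n\ni w$ and hence $\langle w,\alpha_n w\rangle=0$. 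This is the second localized implication of Theorem \ref{T:saturation} applied to the saturation, which is how the paper phrases it. Therefore $y=0$, and your reduction finishes the proof. You already had every ingredient; what was missing was applying the projection identity with $i=n$ to the part of $x$ lying outside $\CH_\infty^{\alpha_n}$.
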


    \begin{proof}
      
      For $n \in \NN_0$, suppose that $\alpha_{n}(\CD_n) \subset \CD_{n+1}$. By Lemma \ref{L:cosimplicial_ids_for_proj}, for $x \in \CD_n, i \in \NN_0$ with $i \le n$, we have $P_n\alpha_i(x) = P_n\hat{P}_n\alpha_i(x) = P_n\alpha_i\hat{P}_{n-1}(x)$. We claim that $\hat{P}_{n-1}(x)=0$. To see why, given that $x \in \CD_n \subset \CH_n \subset \hat{\CH}_n$ we may write $x=y+z$ where $y=\hat{P}_{n-1}(x), z \in \hat{D}_n$. As $\CD_{n} \subset \CD_{n+1}^\perp$, $y \in \CH^{\alpha_n}_\infty$ and $\ip{\alpha_n(y),\alpha_n(z)}=\ip{y,z}=0$, we have (now making use of the assumption $\alpha_{n}(\CD_n) \subset \CD_{n+1}$)
      \begin{align*}
        0&= \ip{x,\alpha_n(x)} = \ip{y+z, \alpha_n(y) + \alpha_n(z)} = \ip{y,y} + \ip{z,\alpha_n(z)}.
      \end{align*}
      Furthermore, by Theorem \ref{T:saturation} we have $\alpha_n(\hat{D}_n)\subset \hat{D}_{n+1}$, thus $\ip{z,\alpha_n(z)}=0$. Hence $\hat{P}_{n-1}(x)=y=0$ so that $\alpha_i(x) \in \CD_{n+1}$.
    \end{proof}
    
We finish this section with an elementary way to produce examples.
\begin{definition}
A 
semi-cosimplicial set $(X_k, \delta_i)_{k,i}$
(abbreviated: SCS) is a sequence $(X_k)_{-1 \le k \in \Zset}$ of sets
together with injective maps $\delta_i: X_{n-1} \rightarrow X_n$ 
(for all $n \in \Nset_0$ and $i=0,\ldots,n$) which satisfy
$\delta_j \delta_i = \delta_i \delta_{j-1}$ if $0 \le i < j \le n$.
\end{definition}
Given an SCS $(X_k, \delta_i)_{k,i}$ we can build an SCH $(\CH_k, \delta_i)_{k,i}$ by interpreting the elements of the sets $X_k$ as orthonormal basis vectors of $\CH_k$ (for all $k$) and by extending the maps $\delta_i$ linearly to isometries (which we still call $\delta_i$). All the definitions and statements about SCHs have natural analogues for SCSs. For example we have a tower $X_{-1} \subset X_0 \subset \ldots$, innovation sets $D_k := X_k \setminus X_{k-1}$ and partial shifts $\alpha_i$ on $X_\infty = \bigcup_k X_k$ respectively on $\CH_\infty = \ell^2(X_\infty)$. Furthermore, the SCS is saturated at level $n$ if $X_{n}=X_\infty^{\alpha_{n+1}}$ and saturated at level $n$ up to level $m$ if $X_n=X_\infty^{\alpha_{n+1}}\cap X_m$. We say in this situation that the SCH has an underlying SCS. 

\begin{examples}\label{E:basic_examples}
  \begin{enumerate}
  \item \label{E:proto} The prototypical example is given by the SCS $(X_k,\delta_i)_{k,i}$ with $X_k:=[k]$ for all $k \in \NN_{-1}$ and $\delta_i:=\e_i$ for all $i$ (as in Section 1).  We have $X_\infty=\NN_0$ and the partial shifts satisfy
    \[ \alpha_i(n)=
    \begin{cases}
      n+1 & \text{ if } 0\le i \le n, \\ n & \text{ if } i>n.
    \end{cases}
  \]
  This example explains the terminology `partial shift'.
\item A modification of the prototypical example: start with $\NN_0$ and the same sequence of partial shifts $(\alpha_i)_i$ as for the prototypical example above.  Let $X_{-1}=X_0=X_1=\emptyset$, $X_2=\{0,2\}$, $X_3=X_2\cup\{1,3\}$ and let $X_n=X_{n-1}\cup\{n\}$ for all $n \ge 4$. As $X_{n-1}\subset \NN_0^{\alpha_n}$, we may form the SCS $(X_k,\delta_i)_{k,i}$ (with each $\delta_i$ being a restriction of $\alpha_i$) (cf. discussion after Definition \ref{D:partial_shifts}).  Notice this example illustrates that the converse to the first implication in Theorem \ref{T:saturation} does not hold, since
  $\CH_0=\CH_{\infty}^{\alpha_1}=\{0\}$ but $\alpha_0(D_3) \not\subset D_4$ (since $1 \in D_3$ but $\alpha_0(1)=2\in D_2$ so $\alpha_0(1)\notin D_4$).

  Note that with this method of `putting the balls $n$ into the wrong boxes $D_{\ell(n)}$' we can produce many examples. We can check that exactly if $\ell: \NN_0 \rightarrow \NN_0$ satisfies $n \le \ell(n) \le \ell(n-1)+1$ for all $n \ge 1$ (where $\ell(0)$ is arbitrary) it gives us an SCS which is associated to the prototypical partial shifts on $\NN_0$. The choice $\ell(n)=n$ for all $n$
  is saturated (right boxes), all the others are not (wrong boxes). The example above is obtained from $\ell(0)=2,\, \ell(1)=3$ and $\ell(n)=n$ for all $n \ge 2$. It is sketched in the figure below.

\begin{figure}[!h]
  \begin{tikzpicture}
 \node (D-1) at (0,1) {$D_{-1}$};
 \node (D0) at (1,1) {$D_0$};
 \node (D1) at (2,1) {$D_1$};
 \node (D2) at (3,1) {$D_2$};
 \node (D3) at (4,1) {$D_3$};
 \node (D4) at (5,1) {$D_4$};
 \node at (6,1) {$\dots$};
 \node at (6,-1.4) {$\dots$};
 
 \pgfmathtruncatemacro{\b}{-2}
 
  \draw [dashed,gray] (0,\b) -- (D-1);
  \draw [dashed,gray] (1,\b) -- (D0);
  \draw [dashed,gray] (2,\b) -- (D1);
  \draw [dashed,gray] (3,\b) -- (D2);
  \draw [dashed,gray] (4,\b) -- (D3);
  \draw [dashed,gray] (5,\b) -- (D4);
 
  \node [vertex] (0) at (3,0) {};
  \node [vertex] (1) at (4,-0.7) {};
  \node [vertex] (2) at (3,-1.4) {};
  \node [vertex] (3) at (4,-1.4) {};
  \node [vertex] (4) at (5,-1.4) {};

\node at (0) [left] {0};
\node at (1) [right] {1};  
\node at (2) [below] {2};
\node at (3) [below] {3};
\node at (4) [below] {4};

  
  \draw [dedge] (0) -- (1);
  \draw [dedge] (1) -- (2);
  \draw [dedge] (2) -- (3);
  \draw [dedge] (3) -- (4);

\end{tikzpicture}
\label{F:examplesSCS}

\end{figure}

\item Let $\CH_{-1}=\CH_{0}=\{0\}$ and let $\CH_n=\Cset$ for all $n > 0$. Let $\alpha_n=\id$ for all $n \in \NN_0$. Then $(\alpha_n)_n$ is a sequence of partial shifts, which induces an SCH.  This trivial example illustrates that the converse to the second implication in Theorem \ref{T:saturation} does not hold, since $\alpha_0(\CD_0) \subset \CD_1$ but $\CH_\infty^{\alpha_0}=\Cset \neq \CH_{-1}$.
\end{enumerate}
\end{examples}

In the case of SCSs we can strengthen Theorem \ref{T:saturation} as follows.

\begin{theorem}\label{T:scs-saturation}
  Fix $n \in \NN_0$. The SCS $(X_k,\delta_i)_{k,i}$ is saturated at level $n-1$ up to level $n$ if and only if $\alpha_i(D_n)\subset D_{n+1}$ for all $0\le i \le n$.
\end{theorem}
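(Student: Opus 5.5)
The plan is to argue directly with elements of the sets. Injectivity of the partial shifts on $X_\infty$ replaces the projection arguments of Lemma \ref{L:cosimplicial_ids_for_proj} and Lemma \ref{L:one4all}. Throughout I use the set-level facts noted after Definition \ref{D:partial_shifts}: $X_{k-1}\subset X_\infty^{\alpha_k}$ for all $k$, $\alpha_i(X_k)\subset X_{k+1}$ (adaptedness), and $\alpha_j\alpha_i=\alpha_i\alpha_{j-1}$ for $i<j$. I also use that each $\alpha_i$ is injective on $X_\infty$, being a union of injective coface maps compatible with the tower. Saturation at level $n-1$ up to level $n$ means $X_{n-1}=X_\infty^{\alpha_n}\cap X_n$. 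The inclusion $\subset$ always holds, so only $X_\infty^{\alpha_n}\cap X_n\subset X_{n-1}$ is at stake.

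For ``$\Rightarrow$'' I fix $0\le i\le n$ and $x\in D_n=X_n\setminus X_{n-1}$. Adaptedness gives $\alpha_i(x)\in X_{n+1}$, so it remains to exclude $\alpha_i(x)\in X_n$. Suppose $\alpha_i(x)\in X_n$. Since $X_n\subset X_\infty^{\alpha_{n+1}}$, the cosimplicial identity with $i<n+1$ gives
\[
\alpha_i(x)=\alpha_{n+1}\alpha_i(x)=\alpha_i\alpha_n(x).
\]
Injectivity of $\alpha_i$ then yields $\alpha_n(x)=x$. Hence $x\in X_\infty^{\alpha_n}\cap X_n=X_{n-1}$, contradicting $x\in D_n$. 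Therefore $\alpha_i(x)\in D_{n+1}$.

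For ``$\Leftarrow$'' only the case $i=n$ of the hypothesis is needed. Take $x\in X_\infty^{\alpha_n}\cap X_n$ and suppose $x\notin X_{n-1}$, i.e.\ $x\in D_n$. Then $x=\alpha_n(x)\in D_{n+1}$. But $D_n\cap D_{n+1}=\emptyset$, because $D_n\subset X_n$ while $D_{n+1}=X_{n+1}\setminus X_n$. This is a contradiction, so $x\in X_{n-1}$.

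There is no real obstacle. The one point needing care is that the strengthening over Theorem \ref{T:saturation} comes from injectivity of $\alpha_i$ on sets, which lets us cancel $\alpha_i$ in $\alpha_i\alpha_n(x)=\alpha_i(x)$. In the Hilbert space setting an isometry cannot be cancelled this way on non-basis vectors mixing levels, which is why only the weaker implications hold there.
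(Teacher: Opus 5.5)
Your proof is correct and follows the paper's argument essentially step for step. In the forward direction you cancel the injective $\alpha_i$ in $\alpha_i(x)=\alpha_{n+1}\alpha_i(x)=\alpha_i\alpha_n(x)$ to get $x\in X_\infty^{\alpha_n}\cap X_n=X_{n-1}$. In the converse you play $x=\alpha_n(x)\in D_{n+1}$ against $D_n\cap D_{n+1}=\emptyset$, and you rightly note that only $i=n$ is needed. One small correction to your closing heuristic: isometries are injective, so cancellation is not what fails for general SCHs. What fails is the set-level dichotomy that an element of $X_{n+1}$ lies either in $X_n$ or in $D_{n+1}$, because a vector of $\CH_{n+1}$ can have nonzero components in both $\CH_n$ and $\CD_{n+1}$.
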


\begin{proof}
  Suppose that the SCS $(X_k,\delta_i)_{k,i}$ is saturated at level $n-1$ up to level $n$ for some $n \in \NN_0$. Then for each $0\le i \le n$ and $x \in D_n$, suppose $\alpha_i(x) \in X_n$. Then
  \[ \alpha_i(x) = \alpha_{n+1} \alpha_i(x) = \alpha_i\alpha_n(x)\]
  so that $x=\alpha_n(x)$ since $\alpha_i$ is injective.  Hence $x \in X^{\alpha_n} \cap X_n = X_{n-1}$, which is a contradiction since $D_n\cap X_{n-1}=\emptyset$. Therefore we must have $\alpha_i(x) \in X_{n+1}\setminus X_n=D_{n+1}$.

  Conversely, suppose $\alpha_i(D_n) \subset D_{n+1}$ for all $0\le i \le n$. Consider  $x \in X^{\alpha_n}\cap X_n$. Thus $x \in X^{\alpha_n}\cap D_k$ for some $0\le k \le n$.
Suppose $x \in X^{\alpha_n}\cap D_n$ then $x=\alpha_n(x) \in D_{n+1}$ (by our assumption), which contradicts $D_n\cap D_{n+1} = \emptyset$. Therefore $x \in D_k$ for some $0 \le k < n$, i.e. $x \in X_{n-1}$. It follows that $X_{n-1}=X_\infty^{\alpha_n}\cap X_n$.
\end{proof}

\begin{rems}
  \begin{enumerate}
    \item While Theorem \ref{T:scs-saturation} follows partially from Theorem \ref{T:saturation}, we provided an elementary proof to emphasise that von Neumann's mean ergodic theorem is not required in the setting of SCSs.
  \item As a consequence of Theorem \ref{T:scs-saturation} there is no
    ambiguity in using the analogue for saturation at level $n-1$ up
    to level $n$ for an SCS $(X_k,\delta_i)_{k,i}$ or its induced SCH
    $(\CH_k,\delta_i)_{k,i}$ since for all $n\in\NN_0$ we have that
    $X_{n-1}=X_\infty^{\alpha_n}\cap X_n$ if and only if
    $\CH_{n-1}=\CH_\infty^{\alpha_n}\cap\CH_n$.
\end{enumerate}

\end{rems}
\section{Cohomology}\label{S:cohomology}

An SCH $(\CH_k, \delta_i)_{k,i}$ yields a cochain complex $(\CH^n,\diff^n)$ with $\CH^n:=\CH_n$ for all $n\ge -1$ and
\[
\begin{tikzcd}
& \CH^{-1} \arrow[r, "\diff^{-1}"] & \CH^0 \arrow[r, "\diff^0"] & \CH^1 \arrow[r, "\diff^1"] & \ldots
\end{tikzcd}
\]
with coboundary map

\[
\diff^{n} = \sum^{n+1}_{i=0} (-1)^{n+1-i} \delta_i = (-1)^{n+1} \sum^{n+1}_{i=0} (-1)^i \delta_i \quad\quad (n \ge -1).
\]

We refer to $x \in \CH^n$ as an $n$-cochain (for $n \ge -1$). For the discussion of cohomology in this section we adopt standard conventions (such as upper indices), as for example in \cite{MacLane1995}. 
We include a (not always used) factor $(-1)^{n+1}$ in the definition of $\diff^{n}$. This convention simplifies some of the following formulas and in fact it is used in \cite{MacLane1995}, Chapter II.3, as well. Clearly such factors do not change the cohomology. It is also convenient to put $\CH^{-2} := \{0\}$ and $\diff^{-2} := 0$.

As explained in the previous section, for an SCH we interpret $\delta_{n+1}$ as an embedding of $\CH^{n}$ into
$\CH^{n+1}$, so an $n$-cochain can also be interpreted as an $(n+1)$-cochain. Let us discuss some features arising from that. First note that the extension $\sum^{n+1}_{i=0} (-1)^{n+1-i} \alpha_i$, which we will denote by the same symbol $\diff^n$, makes sense as a (bounded linear) operator on $\CH_\infty$ and will be used in this way whenever convenient. Because $\CH^k \subset \CH^{\alpha_{k+1}}$ we then have (for all $k \ge -1$)
\begin{align*}
x \in \CH^k \; \text{is a $k$-cocycle} 
& \quad \Leftrightarrow \quad \diff^{k}(x) = 0 \\
& \quad \Leftrightarrow \quad x = \diff^{k-1}(x) 	
\end{align*}
(with $\diff^{-2} = 0$).
So we can think of $k$-cocycles as fixed points of $\diff^{k-1}$ acting on $\CH^k$. Further
\begin{align*}
x \in \CH^k \; \text{is a $k$-coboundary} 
& \quad \Leftrightarrow \quad x = \diff^{k-1}(y) \; \text{with} \; y \in \CH^{k-1}	
\end{align*}
(with $\CH^{-2} = \{0\}$). 

Let us write $\CH_c^k$ for the space of all $k$-cocycles and $\CH_b^k$ for the space of all $k$-coboundaries. We have $\CH_b^k \subset \CH_c^k \subset \CH^k$ because we have a cochain complex. 

\begin{proposition} \label{prop:d}
$\CH_c^k \cap \CH^{k-1} = \CH_b^k \cap \CH^{k-1}$
\end{proposition}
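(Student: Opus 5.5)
The inclusion $\CH_b^k \cap \CH^{k-1} \subset \CH_c^k \cap \CH^{k-1}$ is immediate from $\CH_b^k \subset \CH_c^k$, which holds because we have a cochain complex. So the plan is to prove the reverse inclusion: every $k$-cocycle lying in $\CH^{k-1}$ is a $k$-coboundary. The witness will be the cochain itself, namely $y := x$ viewed as a $(k-1)$-cochain.

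First, I will work throughout on $\CH_\infty$, with $\diff^{n}$ denoting the operator $\sum_{i=0}^{n+1}(-1)^{n+1-i}\alpha_i$, as set up above. Because the $\alpha_i$ extend the $\delta_i$ compatibly with the embeddings, the restriction of this operator to $\CH^{n}$ is exactly the coboundary map $\CH^n \to \CH^{n+1}$. So no information is lost by computing with the $\alpha_i$.

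Next, take $x \in \CH_c^k \cap \CH^{k-1}$, and split off the last term of $\diff^k$:
\[
\diff^k(x) = \alpha_{k+1}(x) + \sum_{i=0}^{k}(-1)^{k+1-i}\alpha_i(x) = \alpha_{k+1}(x) - \diff^{k-1}(x).
\]
Since $x \in \CH^k \subset \CH_\infty^{\alpha_{k+1}}$, we have $\alpha_{k+1}(x)=x$. This is precisely the equivalence ``cocycle $\Leftrightarrow x=\diff^{k-1}(x)$'' already recorded in the text. So the cocycle condition $\diff^k(x)=0$ becomes $x = \diff^{k-1}(x)$.

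Finally, since $x$ actually lies in $\CH^{k-1}$, the right-hand side $\diff^{k-1}(x)$ is the coboundary map applied to the $(k-1)$-cochain $y=x$. Hence $x = \diff^{k-1}(y)$ with $y \in \CH^{k-1}$, so $x \in \CH_b^k$, which completes the proof.

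The only point needing care is the sign and indexing bookkeeping. We must check that the $(-1)^{n+1}$ normalisation gives $\diff^k = \alpha_{k+1} - \diff^{k-1}$ on $\CH_\infty$, and that $\diff^{k-1}$ evaluated on an element of $\CH^{k-1}$ is the genuine coboundary and not merely the extended operator. Both follow directly from the definitions, so I expect no real obstacle. The edge case $k=-1$, where $\CH^{-2}=\{0\}$, is trivial.
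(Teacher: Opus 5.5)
Your proof is correct and is essentially the paper's argument: the paper also uses the recorded equivalence that $x\in\CH^k$ is a $k$-cocycle iff $x=\diff^{k-1}(x)$, and then notes that $x\in\CH^{k-1}$ makes this a coboundary of $x$ itself. Your explicit check that $\diff^k=\alpha_{k+1}-\diff^{k-1}$ and your remark on the trivial inclusion $\CH_b^k\subset\CH_c^k$ only spell out what the paper leaves implicit.
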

\begin{proof}
If $x \in \CH_c^k \cap \CH^{k-1}$ then $x= \diff^{k-1}(x)$ with $x \in \CH^{k-1}$.	
\end{proof}	

\begin{lemma} \label{lem:d}
If $x \in \CH^\ell$ with $-1 \le \ell \le k$ then
\[
\diff^{k}(x) = 
\begin{cases}
  x - \diff^{\ell-1}(x) & \text{if } k-\ell \text{ is even} \\
  \diff^{\ell-1}(x) & \text{if } k-\ell \text{ is odd}.
\end{cases}
\]
\end{lemma}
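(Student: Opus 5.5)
We work on $\CH_\infty$ and use that $x \in \CH^\ell \subset \CH^{\alpha_{m}}_\infty$ for every $m \ge \ell+1$; this holds since $\alpha_m$ acts identically on $\CH_{m-1} \supset \CH_\ell$. Write
\[
\diff^{k}(x) = \sum_{i=0}^{k+1} (-1)^{k+1-i} \alpha_i(x)
\]
and split the sum into the terms with $i \le \ell$ and those with $\ell+1 \le i \le k+1$. In the second group every $\alpha_i$ fixes $x$, so that part equals $x\sum_{i=\ell+1}^{k+1}(-1)^{k+1-i}$. This is an alternating sum of $k-\ell+1$ signs ending with $+1$ at $i=k+1$. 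It equals $1$ when $k-\ell+1$ is odd, i.e.\ $k-\ell$ even, and $0$ when $k-\ell$ is odd.

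For the first group, I factor out the sign and compare it with $\diff^{\ell-1}(x) = \sum_{i=0}^{\ell}(-1)^{\ell-i}\alpha_i(x)$:
\[
\sum_{i=0}^{\ell}(-1)^{k+1-i}\alpha_i(x) = (-1)^{k+1-\ell}\diff^{\ell-1}(x).
\]
The prefactor is $-1$ if $k-\ell$ is even and $+1$ if $k-\ell$ is odd. Adding the two parts gives $x-\diff^{\ell-1}(x)$ in the even case and $\diff^{\ell-1}(x)$ in the odd case, as claimed.

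The degenerate case $\ell=-1$ needs a check. The first group is empty, and this is consistent with the convention $\diff^{-2}=0$. The second group runs over $0 \le i \le k+1$, and every $\alpha_i$ fixes $\CH_{-1}$, so the formula still holds. The only real point to watch is that we use the extended operators $\alpha_i$ on $\CH_\infty$, where $\diff^k(x)$ for $x\in\CH^\ell$ means $\diff^k$ applied to the embedded image of $x$ in $\CH^k$. That identification is exactly the embedding convention fixed earlier via $\delta_{n+1}$ and the partial shifts $\alpha_i$. Beyond this, the proof is pure sign bookkeeping; I expect no genuine obstacle other than getting the parity of the alternating sum right.
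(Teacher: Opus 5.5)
Your proof is correct and is essentially the paper's argument: the paper's entire proof is the observation that $\alpha_i(x)=x$ for $i>\ell$, and you have written out the resulting split of the alternating sum and the parity bookkeeping explicitly. Your treatment of the degenerate case $\ell=-1$ via the convention $\diff^{-2}=0$ is also consistent with the paper.
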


\begin{proof}
	$\alpha_i (x) = x$ if $i>\ell$.
	\end{proof}

\begin{proposition}
	$\CH_c^{k+2} \cap \CH^{k} = \CH_c^{k} \quad(\subset \CH_b^{k+2})$.
\end{proposition}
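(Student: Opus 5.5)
The plan is to reduce everything to Lemma \ref{lem:d}, applied with $\ell = k$ and with the two target degrees $k$ and $k+2$, and then to Proposition \ref{prop:d} for the bracketed inclusion.

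For $x \in \CH^k$, regarded via the embeddings as an element of $\CH^{k+2}$, Lemma \ref{lem:d} with $\ell=k$ gives the following. Taking target degree $k+2$, so that $(k+2)-k=2$ is even, we get $\diff^{k+2}(x) = x - \diff^{k-1}(x)$. Taking target degree $k$, so that $k-k=0$ is even, we get $\diff^{k}(x) = x - \diff^{k-1}(x)$. Hence $\diff^{k+2}(x) = \diff^{k}(x)$ for every $x \in \CH^k$.

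It follows that $x \in \CH^k$ is a $(k+2)$-cocycle if and only if it is a $k$-cocycle, which is exactly $\CH_c^{k+2}\cap \CH^k = \CH_c^k$. Here one notes that $\CH_c^k \subset \CH^k$ by definition, so both sides are subsets of $\CH^k$. The only point needing care is that $\diff^{k+2}$ restricted to $\CH^k\subset\CH^{k+2}$ agrees with the operator $\diff^{k+2}$ on $\CH_\infty$. This holds by the extension convention stated before Proposition \ref{prop:d}, and it is exactly what Lemma \ref{lem:d} uses, namely $\alpha_i x = x$ for $i>\ell$.

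For the parenthetical inclusion $\CH_c^k \subset \CH_b^{k+2}$, I would first note that $\CH^k \subset \CH^{k+1}$. Then
\[
\CH_c^k = \CH_c^{k+2}\cap\CH^k \subset \CH_c^{k+2}\cap \CH^{k+1}.
\]
Proposition \ref{prop:d}, applied at level $k+2$, gives $\CH_c^{k+2}\cap\CH^{k+1} = \CH_b^{k+2}\cap \CH^{k+1} \subset \CH_b^{k+2}$. Concretely, $x = \diff^{k+1}(x)$ with $x \in \CH^{k+1}$.

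No step is a real obstacle. The main thing to get right is the parity bookkeeping in Lemma \ref{lem:d}, in particular that both target degrees $k$ and $k+2$ land in the even case so the two formulas coincide. One should also check the edge case $k=-1$, where $\diff^{-2}=0$ and the same computation goes through.
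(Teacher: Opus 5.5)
Your proposal is correct and takes essentially the paper's route. The equality comes from Lemma \ref{lem:d} with $\ell=k$, giving $\diff^{k+2}(x)=x-\diff^{k-1}(x)$ for $x\in\CH^k$. The inclusion comes from Proposition \ref{prop:d} at level $k+2$, which you apply correctly through $\CH^k\subset\CH^{k+1}$.
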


\begin{proof}
The equality follows from Lemma \ref{lem:d}, in fact if $x \in \CH^{k}$ then $\diff^{k+2}(x) = x - \diff^{k-1}(x)$ and so $\diff^{k+2}(x)=0$ if and only if $x = \diff^{k-1}(x)$. Use Proposition \ref{prop:d} to get the inclusion.	
\end{proof}

We see that we can build the space of all even cocycles (i.e, $k$-cocycles for some even $k$) step by step, starting with small values of $k$ and then adding additional $k$-cocycles if $k$ is increased. The same applies for odd cocycles. 

In the case of a saturated SCH we can get a rather complete picture.

\begin{theorem}\label{T:cohomology}
For an SCH that satisfies $\alpha_k(\CD_k) \subset \CD_{k+1}$ for some $k \in \Nset_{0}$ we have $\CH_c^k = \CH_b^k$, i.e., the $k^\text{th}$ cohomology group is trivial. 
Explicitly we get
\[
\CH_c^k  = \CH_b^k 
= \diff^{k-1} \Big(  \bigoplus_{\substack{\ell:\; -1 \le \ell < k \\ 
                      \text{ with } k - \ell \text{ odd}}} 
                      \CD_\ell \Big) 
=  \bigoplus_{\substack{\ell:\; -1 \le \ell < k \\ 
                      \text{ with } k - \ell \text{ odd}}}  
                      (\id - \diff^{\ell-1})\; \CD_\ell
\]
\end{theorem}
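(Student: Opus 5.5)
The plan is to show directly that every $k$-cocycle is the image under $\diff^{k-1}$ of an element of $\CH^{k-1}$. After that, the coboundary generators are pushed into the odd-parity innovation spaces by modifying with $\diff^{k-2}$-images. Throughout I use $\diff^{k-1}$ as the operator $\sum_{i=0}^{k}(-1)^{k-i}\alpha_i$ on $\CH_\infty$. I also use the characterisation that $x\in\CH^k$ is a cocycle if and only if $x=\diff^{k-1}(x)$.

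\emph{Step 1 (triviality).} Let $x\in\CH_c^k$ and write $x=y+z$ with $y\in\CH_{k-1}$ and $z\in\CD_k$. By Lemma \ref{L:one4all}, the hypothesis $\alpha_k(\CD_k)\subset\CD_{k+1}$ gives $\alpha_i(\CD_k)\subset\CD_{k+1}$ for all $0\le i\le k$, hence $\diff^{k-1}(z)\in\CD_{k+1}$. Adaptedness gives $\alpha_i(\CH_{k-1})\subset\CH_k$, so $\diff^{k-1}(y)\in\CH_k$. Now project the identity $y+z=\diff^{k-1}(y)+\diff^{k-1}(z)$ onto $\CD_{k+1}$. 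Since $y+z\in\CH_k$, this yields $\diff^{k-1}(z)=0$. Therefore $x=\diff^{k-1}(x)=\diff^{k-1}(y)$ with $y\in\CH^{k-1}$, so $x\in\CH_b^k$. The reverse inclusion $\CH_b^k\subset\CH_c^k$ holds because we have a cochain complex, so $\CH_c^k=\CH_b^k=\diff^{k-1}(\CH^{k-1})$.

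\emph{Step 2 (reduction to odd levels).} This is the step I expect to be the main obstacle. I must show that $\diff^{k-1}(\CH^{k-1})$ is already spanned by the images of the $\CD_\ell$ with $k-\ell$ odd. The key observation is that $\diff^{k-1}\diff^{k-2}=0$, so $\diff^{k-1}(y)=\diff^{k-1}(y-\diff^{k-2}(v))$ for any $v\in\CH^{k-2}$.

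Take $y_\ell\in\CD_\ell$ with $k-\ell$ even, so $\ell\le k-2$. By Lemma \ref{lem:d} applied at level $k-2$ (where $(k-2)-\ell$ is even), $\diff^{k-2}(y_\ell)=y_\ell-\diff^{\ell-1}(y_\ell)$. Hence $y_\ell$ may be replaced by $\diff^{\ell-1}(y_\ell)$, which lies in $\CH_{\ell+1}$ by adaptedness. Decompose this new vector into innovation components. Its component at level $\ell+1$ has odd parity, and its even-parity components live at levels $\le\ell-1$.

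I then argue by downward induction on the highest level carrying a nonzero even-parity component. Repeating the replacement strictly lowers that level, and the process terminates because the levels are bounded below by $-1$. The result is a $y'\in\bigoplus_{\ell<k,\;k-\ell\text{ odd}}\CD_\ell$ with $\diff^{k-1}(y')=\diff^{k-1}(y)$. This gives the second equality.

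\emph{Step 3 (explicit form).} For $y_\ell\in\CD_\ell$ with $k-\ell$ odd, the difference $(k-1)-\ell$ is even. So Lemma \ref{lem:d} gives $\diff^{k-1}(y_\ell)=y_\ell-\diff^{\ell-1}(y_\ell)=(\id-\diff^{\ell-1})(y_\ell)$, which yields the last expression.

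Finally I would check that the last sum is genuinely direct. The vector $(\id-\diff^{\ell-1})(y_\ell)$ has leading component $y_\ell\in\CD_\ell$ plus terms in $\CH_{\ell+1}$ whose nonzero levels differ. Directness (and closedness) then follows by a triangular argument comparing top innovation components. I would verify this last point carefully but expect it to be routine.
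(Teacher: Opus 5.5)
Your Step~1 is correct and is exactly the paper's argument: decompose $x=y+z$, use Lemma \ref{L:one4all} to get $\diff^{k-1}(\CD_k)\subset\CD_{k+1}$, and conclude $\diff^{k-1}(z)=0$ by comparing with $\CH_k$. Step~3 is also fine, since Lemma \ref{lem:d} at level $k-1$ gives $\diff^{k-1}(y_\ell)=(\id-\diff^{\ell-1})(y_\ell)$ when $k-\ell$ is odd.

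The gap is in Step~2. Adaptedness only gives $\diff^{\ell-1}(y_\ell)\in\CH_{\ell+1}=\CH_\ell\oplus\CD_{\ell+1}$. Nothing forces its $\CD_\ell$-component to vanish, and level $\ell$ has even parity. So your claim that the even-parity components of $\diff^{\ell-1}(y_\ell)$ sit at levels $\le\ell-1$ is unjustified. The highest even level need not drop, and the induction need not terminate.

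This cannot be repaired using only $\alpha_k(\CD_k)\subset\CD_{k+1}$, because the explicit formula is false under that hypothesis alone. Take $\CH_{-1}=\{0\}$, $\CH_n=\Cset$ for $n\ge 0$, all coface operators the identity (so every $\alpha_n=\id$), and $k=2$.
\begin{itemize}
\item $\CD_0=\Cset$ and all other innovation spaces vanish, so $\alpha_2(\CD_2)\subset\CD_3$ holds trivially.
\item $\diff^1=\id$ and $\diff^2=0$ on $\Cset$, so $\CH_c^2=\CH_b^2=\Cset$, and the first assertion holds.
\item However $\diff^1(\CD_{-1}\oplus\CD_1)=\{0\}$, so the explicit formula fails.
\end{itemize}
In your procedure, $y_0\in\CD_0$ is replaced by $\diff^{-1}(y_0)=\alpha_0(y_0)=y_0$, so nothing changes.

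The missing ingredient is $\diff^{\ell-1}(\CD_\ell)\subset\CD_{\ell+1}$ for the levels $\ell<k$. The paper's component computation of $\diff^k(x)_\ell$ uses precisely this, citing Theorem \ref{T:saturation}. So the explicit formula really requires saturation, or at least $\alpha_\ell(\CD_\ell)\subset\CD_{\ell+1}$ for all $\ell\le k$ (which suffices by Lemma \ref{L:one4all}), rather than the single-level hypothesis as stated.

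Granting that assumption, your reduction works in one step, since $\diff^{\ell-1}(y_\ell)$ then lies in $\CD_{\ell+1}$, an odd-parity level. It is a valid alternative to the paper's route of solving $\diff^k(x)=0$ component by component.

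Your directness check also needs correcting: the top component of $(\id-\diff^{\ell-1})(y_\ell)$ lies in $\CD_{\ell+1}$, not $\CD_\ell$. Under the extra assumption, $(\id-\diff^{\ell-1})\CD_\ell\subset\CD_\ell\oplus\CD_{\ell+1}$, and these blocks are mutually orthogonal for the odd-parity $\ell$. Moreover $\|(\id-\diff^{\ell-1})y\|\ge\|y\|$ gives closedness, so the sum is an orthogonal direct sum.
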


\begin{proof}
Fix $k \in \Nset_0$ and let $x \in \CH_c^k$. Then $x=y+z$ with $y \in \CH^{k-1}$ and $z \in \CD_k$ and
\[
x = \diff^{k-1}(x) = \diff^{k-1}(y) + \diff^{k-1}(z).
\] 
We have $\diff^{k-1}(y) \in \CH_b^k \subset \CH^k$.
Also $\diff^{k-1}(z) = x - \diff^{k-1}(y) \in \CH^k$.
 As $\alpha_k(\CD_k)\subset \CD_{k+1}$ we have, by Lemma \ref{L:one4all}, $\alpha_i (\CD_k) \subset \CD_{k+1}$ for all $0 \le i \le k$, thus $\diff^{k-1}(\CD_k) \subset \CD_{k+1}$. It follows that $\diff^{k-1}(z) \in \CH^k \cap \CD_{k+1} = \{0\}$, hence $\diff^{k-1}(z)=0$ and $x = \diff^{k-1}(y) \in \CH_b^k$. We have proved that
 $\CH_c^k = \CH_b^k$.
 
 Combining $\diff^{\ell-1}(\CD_\ell) \subset \CD_{\ell+1}$ with Lemma \ref{lem:d} allows us to compute these cocycles explicitly. In fact, writing $x \in \CH_c^k$ as 
 \[
 x = (x_{-1}, x_0, \ldots, x_k) \in \bigoplus^k_{\ell= -1} \CD_\ell
 \]
 we find the component $\diff^{k}(x)_\ell \in \CD_\ell$ of $\diff^{k}(x)$ to be (with $-1 \le \ell \le k+1$)
 \[
 \diff^{k}(x)_\ell = 
 \begin{cases}
    \diff^{\ell-2}(x_{\ell-1}) + x_\ell & \text{if } k-\ell \text{ is even} \\
 - \diff^{\ell-2}(x_{\ell-1}) & \text{if } k-\ell \text{ is odd}.
  \end{cases}
 \]
 We use the convention that everything is equal to $0$ which contains a super- or subscript less than $-1$.
 The formulas follow by applying Lemma \ref{lem:d} to $x_{\ell} \in \CD_{\ell} \subset \CH^\ell$ and making use of the fact that by Theorem \ref{T:saturation} 
we have $\diff^{\ell-2}(x_{\ell-1}) \in \CD_\ell$.
 
 We have $x \in \CH^k_c$ if and only if $\diff^{k}(x)=0$. To obtain $\diff^{k}(x)_\ell =0$ for $k-\ell$ even we need $x_\ell = -\diff^{\ell-2}(x_{\ell-1})$ (and if $k-(-1)=k+1$ is even then $x_{-1}=0$). To obtain further that $\diff^{k}(x)_\ell =0$ for $k-\ell$ odd we can choose the $x_\ell \in \CD_\ell$ independently and arbitrarily. In fact, in this case $k-(\ell-1)$ is even and if we have $x_{\ell-1} = \diff^{\ell-3}(x_{\ell-2})$ (as required in the even case) then 
 \[
 \diff^{k}(x)_\ell = 
 -\diff^{\ell-2}(x_{\ell-1}) = \diff^{\ell-2}d^{\ell-3}(x_{\ell-2}) = 0
 \]
automatically. We conclude that $x \in \CH_c^k$ if and only if it is of the form
\begin{align*}
x & = (\ldots,x_{k-3}, -\diff^{k-4}(x_{k-3}), x_{k-1}, -\diff^{k-2}(x_{k-1})) \in \bigoplus^k_{\ell= -1} \CD_\ell \\
& = \; \diff^{k-1} (\ldots, 0, x_{k-3},0,x_{k-1})
\end{align*}
with $x_{-1}=0$ if $k-(-1)=k+1$ is even and with independent and arbitrary choices of $x_\ell \in \CD_\ell$ if $k-\ell$ is odd. This proves the theorem.
\end{proof}	
	
\begin{corollary}\label{sat-c}
  An SCH that is saturated at level $k-1$ has trivial $k^\text{th}$ cohomology group.  Hence, a saturated SCH has trivial cohomology.
\end{corollary}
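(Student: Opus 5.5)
The plan is to reduce the corollary to Theorem \ref{T:cohomology} by way of the localised implications in Theorem \ref{T:saturation}. Fix $k \in \Nset_0$ and assume the SCH is saturated at level $k-1$, i.e. $\CH_{k-1} = \CH_\infty^{\alpha_k}$. The second localised implication of Theorem \ref{T:saturation}, applied with $n=k$, gives $\alpha_i(\CD_k) \subset \CD_{k+1}$ for all $0 \le i \le k$. In particular it gives $\alpha_k(\CD_k) \subset \CD_{k+1}$, which is exactly the hypothesis of Theorem \ref{T:cohomology}. That theorem then yields $\CH_c^k = \CH_b^k$, so the $k^\text{th}$ cohomology group is trivial.

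For the second sentence, a saturated SCH is by definition saturated at level $k-1$ for every $k \in \Nset_0$. The first part therefore gives trivial cohomology in every degree $k \ge 0$.

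The only remaining point, and the one place needing care, is the degree $k=-1$, which Theorem \ref{T:cohomology} does not cover. Since $\CH^{-2} = \{0\}$, we have $\CH_b^{-1} = \{0\}$. We need $\CH_c^{-1} = \ker \diff^{-1} = \{0\}$. This holds because $\diff^{-1} = \delta_0 \colon \CH_{-1} \to \CH_0$ (up to sign) is an isometry and hence injective. This holds for any SCH, saturated or not.

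So the main obstacle is merely bookkeeping: checking that the level indexing in "saturated at level $k-1$" matches $n=k$ in Theorem \ref{T:saturation}, and handling $k=-1$ separately.
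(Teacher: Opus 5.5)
Your proposal is correct and follows the paper's own route. The paper also uses the second localised implication of Theorem~\ref{T:saturation} with $n=k$ to obtain $\alpha_k(\CD_k)\subset\CD_{k+1}$, and then applies Theorem~\ref{T:cohomology}. Your separate treatment of degree $-1$, via injectivity of $\diff^{-1}=\delta_0$ and $\CH^{-2}=\{0\}$, is a correct detail that the paper leaves implicit.
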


\begin{proof}
  Follows immediately from Theorem \ref{T:saturation} and Theorem \ref{T:cohomology}.
\end{proof}

We will not study the cohomology of non-saturated SCHs systematically in this paper but we can illustrate a few phenomena by using examples of SCSs of the kind already seen in Examples \ref{E:basic_examples}(2).
So we consider SCHs from SCSs built with the prototypical partial shifts on $\NN_0$. We can write
$x \in \CH^n$ in the form $x = x_0 0 + x_1 1 + \ldots + x_n n$. Here $0, \ldots, n$ have to interpreted as vectors in an orthonormal basis and the $x_i$ are complex coefficients. In the non-saturated cases some of these basis vectors are actually missing (and the $\CH^n$ have a lower dimension), we can take this into account by just choosing the corresponding complex coefficients to be zero. 

With these conventions the coboundary map $\diff^{-1}$ is zero, and for $n$ even
\begin{align*}
& \diff^{n-1}: \; \CH^{n-1} \rightarrow \CH^{n}, \\
& x_0 0 + x_1 1 + \ldots + x_{n-1} (n-1) \mapsto (x_0+x_1) 1 + (x_2+x_3) 3 + \ldots + (x_{n-2}+x_{n-1}) (n-1),
\end{align*}
\begin{align*}
& \diff^{n}: \; \CH^{n} \rightarrow \CH^{n+1}, \\
& x_0 0 + x_1 1 + \ldots + x_{n} (n) \mapsto x_0 0 - x_0 1 + x_2 2 - x_2 3 + \ldots + x_n n - x_n (n+1)
\end{align*}

\begin{examples}
  \begin{enumerate}
  \item $\ell(0)=1$, $\ell(n)=n$ for $n \ge 1$. In this case $\diff^{-1}$ and $\diff^0$ are zero and 
    $\diff^1 (x_0 0 + x_1 1) = (x_0+x_1) 1$, hence $ker\, \diff^1$ is spanned by $0-1$. So the cohomology of level $1$
    is one-dimensional. On other levels the cohomology is trivial. 

   \item $\ell(0)=2$, $\ell(n)=n$ for $n \ge 1$. Again $\diff^{-1}$ and $\diff^0$ are zero but now $0 \notin \CH^1$ and
    $\diff^1(x_1 1) = x_1 1$, so $ker\, \diff^1$ is trivial and $im\, \diff^1$ is spanned by $1$. Further 
    $\diff^2(x_0 0 + x_1 1 + x_2 2) = x_0 0 - x_0 1 + x_2 2 - x_2 3$, so $ker\, \diff^2$ is also spanned by $1$. Higher levels are unchanged from the saturated case. So we see that in this example the cohomology is trivial on all levels. The example shows that a non-saturated SCH can also have trivial cohomology, there is no immediate converse to the statement in Corollary \ref{sat-c}.
  \end{enumerate}
\end{examples}

\section{Hessenberg Factorizations and Braided SCHs}

We discuss another setting which will be useful to describe partial shifts, in particular by establishing certain connections to braid group representations. 
Let
\[
\CH_{-1} \subset \CH_0 \subset \ldots \subset \CH_\infty := \overline{\bigcup_{-1 \le k \in \Zset}{\CH_k}}
\]
be a tower of Hilbert spaces and
suppose that a sequence of unitaries $(u_m)_{m \ge 1}$ on $\CH_\infty$ satisfies
\begin{align*}
(H1) & \quad \mbox{for all } k \ge 1, \quad \CH_{k-2} \mbox{ is in the fixed point space of } u_{k}\\
(H2) & \quad \mbox{for all } \ell \ge k \ge 1, \quad \CH_{\ell} \mbox{ is an invariant subspace for } u_{k}\\
(C) & \quad u_m \mbox{ commutes with } u_n \mbox{ whenever } |m-n| \ge 2.
\end{align*}
Because of (H1) we can define a sequence of isometries
$(\alpha_n)_{n \in \Nset_0}$ on $\CH_\infty$ by
\[
\alpha_n := sot-\lim_{m \to \infty} u_{n+1} u_{n+2} \ldots u_m,
\]
where sot refers to the strong operator topology. In fact,
\[
\alpha_n |_{\CH_k} = 
\left\{
\begin{array}{ll}
id & \mbox{if } k<n \\
u_{n+1} \ldots u_{k+1} & \mbox{if } k \ge n
\end{array}
\right.
\]
Note that
$\alpha_{n-1} = u_n \alpha_n$ for all $n \ge 1$
and $u_m$ commutes with $\alpha_n$ whenever
$1 \le m < n$. Because of (H2),
for all $n \ge 0$ and $k \ge -1$, we have $u_{k+1} \CH_k \subset u_{k+1} \CH_{k+1} = \CH_{k+1}$ and
$\alpha_n \CH_k \subset \CH_{k+1}$, in other words: adaptedness is automatic. Note that this means that the block matrices for the $\alpha_n$ (with respect to innovation spaces $\CD_k := \CH_k \ominus \CH_{k-1}$) have Hessenberg form: upper triangular except for entries just below the diagonal. This motivates the following terminology. 

\begin{definition}
We refer to $(\CH_k, u_m)_{k,m}$ satisfying (H1), (H2), (C) as a Hessenberg factorization for the (adapted) isometries $(\alpha_n)_{n \in \Nset_0}$ on $\CH_\infty$.
\end{definition}

Hessenberg factorizations appear naturally.

Suppose $\alpha_0: \CH \rightarrow \CH$ is an isometry. We define $\CH_{-1}$ to be the subspace of fixed points of $\alpha_0$ and then $(\CH_k)$ any tower so that $\alpha_0(\CH_{k-1}) \subset \CH_k$ for all $k$. If inductively for each $k \ge 1$ we can find a unitary $u_k$ on $\CH_k$, extended by the identity to the orthogonal complement of $\CH_k$, so that $u_1 \ldots u_k$ coincides with $\alpha_0$ on $\CH_{k-1}$ (for example this is always possible if the $\CH_k$ are finite dimensional) then we obtain a Hessenberg factorization for $(\alpha_n)$ as above. What is achieved is that we write the isometry $\alpha_0$ as an infinite product of unitaries which are well localized with respect to the tower and which are in this sense simpler than the original isometry. For example in \cite{FF90}, XV.2 this is done for the minimal isometric dilation of a positive definite sequence and in \cite{Go04}, 3.1.2 there is a discussion of more general cases.

On the other hand suppose that we have a sequence of unitaries $(u_m)$ satisfying (C). Then we can define 
\[
\hat{\CH}_{k} := \CH^{u_\ell, \ell \ge k+2} := \{ x \in \CH_\infty \colon \quad u_\ell\, x = x \quad \mbox{for all } \ell \ge k+2 \}
\]
(for all $k \ge -1$) and obtain a Hessenberg factorization for a sequence of isometries as above. We call $(\hat{\CH}_k, u_m)_{k,m}$
saturated.

Note that if we start with any Hessenberg factorization
$(\CH_k, u_m)_{k,m}$ then
$\CH_{k} \subset \hat{\CH}_{k}$ for all $k$
and $(\hat{\CH}_k, u_m)_{k,m}$ provides another (saturated) Hessenberg factorization for the same isometries which we call the saturation.



\vspace{0.2cm}
We are interested in Hessenberg factorizations for partial shifts. 

\begin{lemma}\label{lem:braided}
	Suppose we have a Hessenberg factorization $(\CH_k, u_m)_{k,m}$ for the
	 isometries $(\alpha_n)_{n \in \Nset_0}$ on $\CH_\infty$. 
The following are equivalent: 
\begin{itemize}
	\item[(1)] 
	$u_{j+1} \alpha_i = \alpha_i u_j$ for all $i < j$ 
    \item[(2)] 
    $u_{j+1} \alpha_{j-1} = \alpha_{j-1} u_j$ for all $j \ge 1$ 
    \item[(3)] 
    $u_j u_{j+1} u_j = u_{j+1} u_j u_{j+1}$ on $\alpha_{j+1}(\CH_\infty)$, for all $j \ge 1$	
\end{itemize}	
If one (and hence all) of the conditions $(1) - (3)$ is valid then $(\alpha_n)_{n \ge 0}$
is a sequence of partial shifts.	
	 	\end{lemma}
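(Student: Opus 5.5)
The plan is to prove (1) $\Rightarrow$ (2) $\Rightarrow$ (3) $\Rightarrow$ (1), and then deduce the partial shift identities from (1). Throughout I use the facts noted before the lemma: $\alpha_{n-1} = u_n \alpha_n$, and $u_m$ commutes with $\alpha_n$ whenever $1 \le m < n$. By (C), $u_m$ commutes with $u_{n+1}, u_{n+2}, \ldots$ when $m < n$, so this passes to the strong limit.

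(1) $\Rightarrow$ (2) is the special case $i = j-1$.

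For (2) $\Rightarrow$ (1), fix $j$ and induct downward on $i < j$, starting from $i = j-1$, which is (2). If the claim holds for $i$ with $i \ge 1$, write $\alpha_{i-1} = u_i \alpha_i$. Since $i < j$, $u_i$ commutes with $u_j$ and $u_{j+1}$ by (C). Hence
\[
u_{j+1}\alpha_{i-1} = u_i u_{j+1} \alpha_i = u_i \alpha_i u_j = \alpha_{i-1} u_j .
\]

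For (2) $\Leftrightarrow$ (3), rewrite (2) as
\[
u_{j+1} u_j u_{j+1} \alpha_{j+1} = u_j u_{j+1} \alpha_{j+1} u_j .
\]
Because $u_j$ commutes with $\alpha_{j+1}$, the right-hand side equals $u_j u_{j+1} u_j \alpha_{j+1}$. So (2) for index $j$ says exactly that $u_{j+1}u_ju_{j+1}$ and $u_ju_{j+1}u_j$ agree on the range $\alpha_{j+1}(\CH_\infty)$, which is (3). Each rewriting step is reversible, so the equivalence holds index by index.

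Finally, assume (1) holds and let $i < j$. Write $\alpha_j = u_{j+1}\alpha_{j+1}$ and $\alpha_{j-1} = u_j \alpha_j$. Then
\[
\alpha_j \alpha_i = u_{j+1}\alpha_{j+1}\alpha_i ,
\]
and I want this to equal
\[
\alpha_i \alpha_{j-1} = \alpha_i u_j \alpha_j = u_{j+1}\alpha_i \alpha_j ,
\]
where the last step uses (1). The claim therefore reduces to $\alpha_{j+1}\alpha_i = \alpha_i\alpha_j$, which is the same identity with $j$ raised by one. This recursion does not terminate, so I would argue on the dense union of the $\CH_k$ instead. For $x \in \CH_k$, adaptedness gives $\alpha_i x \in \CH_{k+1}$. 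Once $j > k+1$, both sides act trivially: $\alpha_j \alpha_i x = \alpha_i x$ because $\CH_{k+1}$ is fixed by $\alpha_j$ for $j \ge k+2$, and $\alpha_i \alpha_{j-1} x = \alpha_i x$ because $\CH_k$ is fixed by $\alpha_{j-1}$.

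So for fixed $x \in \CH_k$, the identity $\alpha_j\alpha_i x = \alpha_i\alpha_{j-1}x$ holds for all large $j$. Running the reduction above backwards, a downward induction on $j$ gives it for every $j > i$: if $\alpha_{j+1}\alpha_i x = \alpha_i\alpha_j x$, then multiplying by $u_{j+1}$ and using the computation above yields $\alpha_j\alpha_i x = \alpha_i\alpha_{j-1}x$. By density and continuity the identity holds on all of $\CH_\infty$.

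I expect this last bookkeeping step to be the main obstacle. The care needed is to keep the induction pointwise, with $x$ fixed and $j$ decreasing, so that it is well founded, and to check exactly which index thresholds make each side act as the identity.
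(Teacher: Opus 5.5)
Your proof is correct and follows essentially the same route as the paper. The paper writes $\alpha_i = u_{i+1}\cdots u_{j-1}\alpha_{j-1}$ and $\alpha_j|_{\CH_{k+1}} = u_{j+1}\cdots u_{k+2}$ as finite products, pushes $\alpha_i$ through all the factors at once using (1), and then concludes by approximation; your one-step inductions on $i$, and on $j$ pointwise on $\CH_k$, are that same computation unrolled. One harmless slip: in (2)$\Rightarrow$(1) you only need $u_i$ to commute with $u_{j+1}$, and $u_i$ need not commute with $u_j$ when $i=j-1$.
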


\begin{proof}
$(1)\Rightarrow(2)$ is obvious. The converse $(2)\Rightarrow(1)$ follows (assuming (2) and $i < j-1$) from
\begin{align*}
u_{j+1} \alpha_i 
& = u_{j+1} u_{i+1} \ldots u_{j-1} \alpha_{j-1}
= u_{i+1} \ldots u_{j-1} u_{j+1} \alpha_{j-1} \\
& = u_{i+1} \ldots u_{j-1} \alpha_{j-1} u_j
= \alpha_i u_j
\end{align*}

$(2)\Leftrightarrow(3)$ follows from
\begin{align*}
u_{j+1} \alpha_{j-1} & = u_{j+1} u_j u_{j+1} \,\alpha_{j+1} \\
\alpha_{j-1} u_j = u_j u_{j+1}\,\alpha_{j+1} u_j & = u_j u_{j+1} u_j \,\alpha_{j+1}
\end{align*}

To show the remaining part we use (1). Let $i<j$ and $x \in \CH_k$ for some $k$. Then we have $\alpha_i(x) \in H_{k+1}$ and 
\[
\alpha_j \alpha_i x = u_{j+1} \ldots u_{k+2} \alpha_i x = \alpha_i u_j \ldots u_{k+1} x = \alpha_i \alpha_{j-1} x
\]	
For $x \in \CH_\infty$ the result follows by approximation. 	
	\end{proof}

Suppose now that we are given a unitary representation of the infinite braid group $\Bset_\infty$
on a Hilbert space $\CH$. 
For simplicity of notation we denote both the abstract Artin generators and the unitaries representing them by $(\sigma_m)_{m \in \Nset}$. 
We have the braid relations
\begin{align*}
\text{(B1)} & \quad 
\sigma_m \sigma_{m+1} \sigma_m = \sigma_{m+1} \sigma_m \sigma_{m+1} \mbox{ for all } m \ge 1;\\
\text{(B2)} & \quad \sigma_m \mbox{ commutes with } \sigma_n \mbox{ whenever } |m-n| \ge 2.
\end{align*}

If for $k \ge -1$ we define
\[
\hat{\CH}_k := 
\CH^{\sigma_\ell, \ell \ge k+2}
:= \{ x \in \CH \colon \quad \sigma_\ell\, x = x \quad \mbox{for all } \ell \ge k+2 \}
\]
we obtain a tower
\[
\hat{\CH}_{-1} \subset \hat{\CH}_0 \subset \hat{\CH}_1 \subset \ldots \subset \hat{\CH}_\infty := \overline{\bigcup_{1 \le k \in \Zset}{\hat{\CH}_k}}.
\]
Only the subrepresentation on $\hat{\CH}_\infty$ is relevant for the following constructions. 

\begin{theorem}\label{thm:canon}
Given a unitary representation of $\Bset_\infty$ on $\CH$ (as above). Then $(\hat{\CH}_k, \sigma_m)_{k,m}$ provides a saturated Hessenberg factorization for a sequence 
$(\alpha_n)_{n \in \Nset_0}$
of partial shifts.	
\end{theorem}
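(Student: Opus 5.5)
The plan is to check that $(\hat{\CH}_k,\sigma_m)_{k,m}$ satisfies (H1), (H2) and (C), and then apply Lemma \ref{lem:braided} through its condition (3). We work on $\hat{\CH}_\infty$, which is invariant under all $\sigma_m$ and $\sigma_m^{*}$: each $\hat{\CH}_k$ is invariant for every $\sigma_m$, as shown below, and unitaries leaving a closed subspace and its complement suitably invariant restrict to unitaries. Condition (C) is exactly (B2).

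For (H1), note that for $k\ge 1$ we have $\hat{\CH}_{k-2} = \CH^{\sigma_\ell,\ell\ge k}$, so $\sigma_k$ fixes $\hat{\CH}_{k-2}$ by definition. For (H2), take $\ell \ge k\ge 1$ and $x\in\hat{\CH}_\ell$; we must show $\sigma_k x \in \hat{\CH}_\ell$, i.e. $\sigma_p\sigma_k x=\sigma_k x$ for all $p\ge \ell+2$. Since $p\ge \ell+2\ge k+2$, (B2) gives $\sigma_p\sigma_k x=\sigma_k\sigma_p x=\sigma_k x$. The same argument works for $\sigma_k^{*}$, so each $\hat{\CH}_\ell$ reduces $\sigma_k$. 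Hence the $\sigma_m$ restrict to unitaries on $\hat{\CH}_\infty$. The tower is increasing trivially, and saturatedness holds by construction, since the tower is exactly $\CH^{u_\ell,\ell\ge k+2}$ with $u_m=\sigma_m$. This yields a Hessenberg factorization, and the isometries $\alpha_n=\text{sot-}\lim_m \sigma_{n+1}\cdots\sigma_m$ are defined as in the text.

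It remains to check condition (3) of Lemma \ref{lem:braided}, namely $\sigma_j\sigma_{j+1}\sigma_j=\sigma_{j+1}\sigma_j\sigma_{j+1}$ on $\alpha_{j+1}(\hat{\CH}_\infty)$. This holds on all of $\CH$ by (B1), so it holds a fortiori on that subspace. Lemma \ref{lem:braided} then gives that $(\alpha_n)$ is a sequence of partial shifts.

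The only point needing care is the invariance and unitarity of the restrictions to $\hat{\CH}_\infty$, which the reducing-subspace observation handles. We would also double-check that the definition of $\alpha_n$ via (H1) uses exactly the index shift $\hat{\CH}_{k-2}$ fixed by $\sigma_k$, so that the limit stabilizes on each $\hat{\CH}_k$.
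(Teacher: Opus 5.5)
Your proof is correct and follows the paper's argument: (C) comes from (B2), the fixed-point tower $\hat{\CH}_k=\CH^{\sigma_\ell,\ell\ge k+2}$ gives the saturated Hessenberg factorization, and (B1) supplies condition (3) of Lemma~\ref{lem:braided}. You also spell out (H1), (H2) and the restriction to $\hat{\CH}_\infty$, which the paper leaves implicit.

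One small slip: your opening claim that each $\hat{\CH}_k$ is invariant under every $\sigma_m$ is false for $m=k+1$. For example, in the permutation representation on $\ell^2(\Nset_0)$ one has $\hat{\CH}_k=\spn\{e_0,\ldots,e_k\}$, while $\sigma_{k+1}e_k=e_{k+1}$. What you actually prove is that $\hat{\CH}_\ell$ reduces $\sigma_m$ for $m\le\ell$. That already suffices, since $\hat{\CH}_\infty=\overline{\bigcup_{\ell\ge m}\hat{\CH}_\ell}$ then reduces each $\sigma_m$.
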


\begin{proof}
From (B2) we have (C) for the sequence $(\sigma_m)$ of unitaries, so our choice of tower provides the corresponding saturated Hessenberg factorization for the $\alpha_n$ defined by 
$\alpha_n = \operatorname{sot}-\lim_{m \to \infty} \sigma_{n+1} \sigma_{n+2} \ldots \sigma_m$. Now (B1) together with Lemma \ref{lem:braided} yields that the $\alpha_n$ are partial shifts.

\end{proof}

\begin{definition}
The SCH $(\hat{\CH}_k, \delta_i)_{k,i}$ on $\hat{\CH}_\infty$ which is associated to the partial shifts given in Theorem \ref{thm:canon} is called the canonical SCH of the unitary representation of $\Bset_\infty$. 	
\end{definition}

\begin{proposition}\label{prop:braided}
The canonical SCH is saturated, i.e., for all $n \in \Nset_0$

\[
\hat{\CH}_\infty^{\alpha_n} = \hat{\CH}_{n-1} \;(:= \CH^{\sigma_\ell, \ell \ge n+1}).
\]

\end{proposition}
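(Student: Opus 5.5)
We need to show $\hat{\CH}_\infty^{\alpha_n} = \hat{\CH}_{n-1}$. The inclusion $\hat{\CH}_{n-1} \subset \hat{\CH}_\infty^{\alpha_n}$ is immediate: by the explicit formula for $\alpha_n$ restricted to the tower, $\alpha_n$ acts as the identity on $\hat{\CH}_k$ for $k<n$, in particular on $\hat{\CH}_{n-1}$. Equivalently, each factor $\sigma_\ell$ with $\ell \ge n+1$ fixes such vectors, so every partial product $\sigma_{n+1}\cdots\sigma_m$ does, and hence so does the strong limit.

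For the reverse inclusion, let $x \in \hat{\CH}_\infty$ with $\alpha_n x = x$. We must show $\sigma_\ell x = x$ for all $\ell \ge n+1$. The key relation is $\alpha_{\ell-1} = \sigma_\ell \alpha_\ell$ for all $\ell \ge 1$. We will first show that $\alpha_\ell x = x$ for all $\ell \ge n$. This is the argument from Proposition \ref{P:saturation}: the cosimplicial identity $\alpha_{m+1}\alpha_m = \alpha_m\alpha_m$ gives $\alpha_{m+1}x = \alpha_{m+1}\alpha_m x = \alpha_m^2 x = x$ whenever $\alpha_m x = x$, and induction yields $\alpha_\ell x = x$ for every $\ell \ge n$. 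The partial shift property is available from Theorem \ref{thm:canon}. Then, for $\ell \ge n+1$,
\[
\sigma_\ell x = \sigma_\ell \alpha_\ell x = \alpha_{\ell-1} x = x,
\]
since $\ell-1 \ge n$. Hence $x \in \CH^{\sigma_\ell,\ell\ge n+1} = \hat{\CH}_{n-1}$. The relation $\alpha_{\ell-1}=\sigma_\ell\alpha_\ell$ holds on all of $\hat{\CH}_\infty$: it holds on each $\hat{\CH}_k$ by the explicit formula, and both sides are bounded operators, so it extends by density.

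The only point needing care, and it is minor, is that the fixed point space is taken inside $\hat{\CH}_\infty$ rather than $\CH$. This causes no difficulty: the operators $\sigma_\ell$ leave $\hat{\CH}_\infty$ invariant by (H2), and the $\alpha_n$ are defined there. The conclusion $\sigma_\ell x = x$ then matches the definition of $\hat{\CH}_{n-1}$ as a subspace of $\CH$. I expect no genuine obstacle. The argument reduces to the factorization $\alpha_{\ell-1}=\sigma_\ell\alpha_\ell$ combined with the propagation of fixed points upward along the partial shifts.
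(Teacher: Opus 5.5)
Your proposal is correct and follows essentially the same route as the paper. You propagate the fixed point upward via $\alpha_{m+1}\alpha_m=\alpha_m^2$, then use $\sigma_\ell x=\sigma_\ell\alpha_\ell x=\alpha_{\ell-1}x=x$ for $\ell\ge n+1$, making explicit the induction that the paper compresses into ``a similar argument applies for all $\sigma_\ell$ with $\ell\ge n+1$''.
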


\begin{proof}
What we need to check here is that our definitions of saturatedness for an SCH and for a Hessenberg factorization are consistent for the canonical SCH of a braid group representation. 
It is clear that $\hat{\CH}_{n-1} = \CH^{\sigma_\ell, \ell \ge n+1} \subset \hat{\CH}_\infty^{\alpha_n}$. To prove the inclusion $\hat{\CH}_\infty^{\alpha_n} \subset \CH^{\sigma_\ell, \ell \ge n+1}$
assume that $\alpha_n(x)=x$. Hence also
$\alpha_{n+1}(x)=x$ and we get
\[
\sigma_{n+1}(x) = \sigma_{n+1} \alpha_{n+1}(x) = \alpha_n(x) = x.
\]
A similar argument applies for all $\sigma_\ell$ with $\ell \ge n+1$.
\end{proof}

\begin{proposition} \label{prop:fix}
For an SCH $(\CH_k, \delta_i)_{k,i}$ the following two properties are equivalent:
\begin{itemize}
    \item[(a)] 
    There exists a unitary representation of $\Bset_\infty$ on $\CH_\infty$ so that $\delta_i: \CH_{n-1} \rightarrow \CH_n$
    can be factorized as
    \[
    \delta_i = \sigma_{i+1} \ldots \sigma_{n+1} |_{\CH_{n-1}} \quad i=0,\ldots n,\;\; n \in \Nset_0.
    \]
    \item[(b)]
    The saturation of the SCH is the canonical SCH of a unitary representation of $\Bset_\infty$.
\end{itemize}
\end{proposition}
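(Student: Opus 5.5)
For (b)$\Rightarrow$(a), assume the saturation $(\hat{\CH}_k,\hat\delta_i)_{k,i}$ is the canonical SCH of a unitary representation $(\sigma_m)$ of $\Bset_\infty$. The canonical SCH lives on $\hat{\CH}_\infty$, and by Remark (1) on the saturation this equals $\CH_\infty$. So the representation, restricted to this invariant subspace, is a representation on $\CH_\infty$. Its partial shifts $\alpha_n = \operatorname{sot-lim}_m \sigma_{n+1}\cdots\sigma_m$ coincide with the partial shifts of the saturation, which by the same remark are the partial shifts of the original SCH.

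Take $x\in\CH_{n-1}\subset\hat{\CH}_{n-1}$. The tower is $\hat{\CH}_{n-1}=\CH^{\sigma_\ell,\ell\ge n+1}$, so $\sigma_\ell x=x$ for all $\ell\ge n+2$. Hence, using the formula for $\alpha_i|_{\CH_k}$ in a Hessenberg factorization,
\[
\delta_i x=\alpha_i x=\sigma_{i+1}\cdots\sigma_{n+1}x .
\]
This is exactly the factorization in (a).

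For (a)$\Rightarrow$(b), the plan is to show that the given representation's canonical SCH equals the saturation. Set $\hat{\CH}'_k:=\CH_\infty^{\sigma_\ell,\ell\ge k+2}$ and let $\alpha'_n$ be the partial shifts from Theorem \ref{thm:canon}. The steps, in order, are as follows.

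\begin{enumerate}
\item Show $\CH_{n-1}\subset\hat{\CH}'_{n-1}$ for all $n$. Writing $\delta_i$ at different levels, we get $\sigma_{i+1}\cdots\sigma_{n+1}x=\delta_i x=\sigma_{i+1}\cdots\sigma_{n+2}x$ for $x\in\CH_{n-1}$, since $\delta_i$ at level $n+1$ restricted to $\CH_{n-1}\subset\CH_n$ agrees with $\delta_i$ at level $n$ by the compatibility established after Definition \ref{D:partial_shifts}. Cancelling the unitaries gives $\sigma_{n+2}x=x$. Iterating upward in the level gives $\sigma_\ell x=x$ for all $\ell\ge n+2$.
\item Conclude $\alpha'_i=\alpha_i$ on each $\CH_{n-1}$, since both equal $\sigma_{i+1}\cdots\sigma_{n+1}$ there. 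By density of $\bigcup_k\CH_k$ they agree on $\CH_\infty$.
\item Show $\CH_\infty$ is invariant under the representation. It is invariant under the $\alpha'_i$, but we need invariance under each $\sigma_m$. We have $\sigma_m=\alpha'_{m-1}(\alpha'_m)^*$ on the range of $\alpha'_m$. The cleaner route is to use $\sigma_{n+1}=\delta_n\delta_{n+1}^{-1}$ on $\CH_{n}$, more precisely $\sigma_{n+1}$ acting on $\delta_{n+1}\CH_{n-1}$. A more reliable argument: $\sigma_{i+1}\cdots\sigma_{n+1}$ maps $\CH_{n-1}$ into $\CH_n$ for all $i$. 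Taking $i=n$ gives $\sigma_{n+1}\CH_{n-1}\subset\CH_n$. Then $i=n-1$ gives $\sigma_n\sigma_{n+1}\CH_{n-1}\subset\CH_n$, and so on. This shows $\sigma_m$ maps $\CH_{m-2}$ into $\CH_{m-1}$, and for $x\in\CH_k$ with $k\le m-3$ we simply have $\sigma_m x=x$. For $m\le k+1$ we need $\sigma_m\CH_k\subset\CH_\infty$. By (B2), $\sigma_m$ commutes with $\sigma_\ell$ for $\ell\ge m+2$, and the step-1 relation gives an identity of the form $\sigma_m\cdots\sigma_{k+2}x\in\CH_{k+1}$. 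Peeling off the factors $\sigma_{k+2},\ldots$ that fix $x$ should yield membership in some $\CH_{k'}$. This step is the one I expect to be the main obstacle; if it fails, I would instead restrict to the closed invariant subspace generated by $\CH_\infty$ and check that its canonical tower still works.
\item Identify the towers via Proposition \ref{prop:braided}. The canonical SCH of the restricted representation is saturated with $\hat{\CH}'_{n-1}=\CH_\infty^{\alpha'_n}=\CH_\infty^{\alpha_n}=\hat{\CH}_{n-1}$. So it coincides with the saturation, which proves (b).
\end{enumerate}
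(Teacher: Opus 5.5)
Your proof follows the paper's route. For (b)$\Rightarrow$(a) you restrict to $\CH_{n-1}\subset\hat{\CH}_{n-1}$. For (a)$\Rightarrow$(b) you place $\CH_{n-1}$ inside the canonical tower, show the two sequences of partial shifts agree, and invoke Proposition~\ref{prop:braided}. Two corrections are needed.

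First, your cancellation argument in step 1 only gives $\sigma_\ell x=x$ for $\ell\ge n+2$. That is $\CH_{n-1}\subset\hat{\CH}'_n$, not the claimed $\hat{\CH}'_{n-1}$. The missing identity $\sigma_{n+1}x=x$ is simply the case $i=n$ of (a), because $\delta_n$ is the inclusion on $\CH_{n-1}$. The paper uses exactly this, $\sigma_\ell x=\delta_{\ell-1}x=x$ for $x\in\CH_{\ell-2}$, to get all $\ell\ge n+1$ at once. That said, your weaker inclusion already suffices for steps 2 and 4: it gives density of the canonical tower in $\CH_\infty$, and the sot-limits stabilize at $\sigma_{n+1}$.

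Second, step 3 is unnecessary. In (a) the representation of $\Bset_\infty$ is given on $\CH_\infty$ itself, so there is no invariance to prove, and the unfinished sketch can be dropped.
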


\begin{proof}
Given (b) note that the space $\CH_\infty$ does not change when we go to the saturation. Now using the representation of $\Bset_\infty$ for which this saturation is canonical we see that by restricting to $\CH_{n-1} \subset \hat{\CH}_{n-1} = 
\CH^{\sigma_\ell, \ell \ge n+1}$ we get the factorizations of the $\delta_i$ stated in (a).

Conversely, if the representation in (a) is given we can consider its canonical SCH which lives on $\CH_\infty$.
If $\ell \ge n+1$ and $x \in \CH_{n-1}\;$ ($\subset \CH_{\ell-2}$) then, because $\delta_{\ell-1}$ on $\CH_{\ell-2}$ acts identically, we have $\sigma_\ell x = \delta_{\ell-1} x = x$.
Hence $\CH_{n-1} \subset \hat{\CH}_{n-1} = 
\CH^{\sigma_\ell, \ell \ge n+1}$.
The factorizations for the $\delta_i$ ensure further that the partial shifts for the original SCH and for this canonical SCH are the same. Now it follows from Proposition \ref{prop:braided} that the saturation of the original SCH is exactly this canonical SCH.
\end{proof}

\begin{definition} \label{def:braided}
	An SCH is called braided if it satisfies the properties in Proposition \ref{prop:fix}. 	
\end{definition}

\begin{corollary}\label{cor:braided}
An SCH is the canonical SCH of a unitary representation of $\Bset_\infty$ if and only if it is braided and saturated.
\end{corollary}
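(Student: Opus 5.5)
Both directions follow from Proposition \ref{prop:braided} and Proposition \ref{prop:fix}, together with the observation that a saturated SCH coincides with its own saturation.

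\emph{Forward direction.} Suppose the SCH is the canonical SCH $(\hat{\CH}_k,\delta_i)_{k,i}$ of a unitary representation of $\Bset_\infty$. By Proposition \ref{prop:braided} it is saturated. A saturated SCH is its own saturation, since $\CH_{n-1}=\CH_\infty^{\alpha_n}$ is exactly how the saturation is defined in Proposition \ref{P:saturation}. Hence its saturation is the canonical SCH of a representation of $\Bset_\infty$, which is property (b) of Proposition \ref{prop:fix}. So the SCH is braided.

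A small technicality arises here. The canonical SCH lives on $\hat{\CH}_\infty$, while Proposition \ref{prop:fix}(a) asks for a representation on $\CH_\infty$ of the SCH. These spaces coincide, since $\hat{\CH}_\infty$ is the closure of the union of the tower. One should also check that the $\sigma_m$ leave $\hat{\CH}_\infty$ invariant, so that we can restrict the representation there. This holds because (H2) gives $\sigma_m\hat{\CH}_\ell=\hat{\CH}_\ell$ for $\ell\ge m$, and (H1) gives that $\sigma_m$ fixes $\hat{\CH}_\ell$ for $\ell\le m-2$. Consequently $\sigma_m$ maps $\bigcup_\ell\hat{\CH}_\ell$ into itself. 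Since it is unitary on the whole space, its restriction to the closure is a unitary of $\hat{\CH}_\infty$ onto itself.

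\emph{Converse direction.} Suppose the SCH is braided and saturated. By property (b) of Proposition \ref{prop:fix}, its saturation is the canonical SCH of a unitary representation of $\Bset_\infty$. Because the SCH is saturated, it equals its saturation: the spaces $\CH_k$ and $\hat{\CH}_k=\CH_\infty^{\alpha_{k+1}}$ agree, and so do the coface operators, which are restrictions of the same partial shifts. Hence the SCH itself is that canonical SCH.

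The only point needing care is this identification of a saturated SCH with its saturation, including the coface maps. It is immediate from Corollary \ref{C:saturation} and the remarks after it, because the saturation has the same ambient space and the same partial shifts.
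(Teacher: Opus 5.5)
Your proposal is correct and follows the paper's route. The paper gives no separate proof: the corollary is an immediate consequence of Proposition \ref{prop:braided} (canonical SCHs are saturated) and Proposition \ref{prop:fix}(b), together with the fact that a saturated SCH coincides with its own saturation. Your check that the $\sigma_m$ leave $\hat{\CH}_\infty$ invariant is correct and fills in the paper's remark that only the subrepresentation there matters, although for the corollary itself it suffices to verify property (b), which you do directly.
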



Note that Proposition \ref{prop:braided} insures that saturation as an SCH and the saturation of the Hessenberg factorization gives the same result for braided SCH.

Finally consider the infinite symmetric group $\Sset_\infty$. It is a quotient of $\Bset_\infty$, hence we can think of any representation of $\Sset_\infty$ as a representation of $\Bset_\infty$. This leads to the following natural extension of our terminology.

\begin{definition} \label{def:symmetric}
	The SCH $(\hat{\CH}_k, \delta_i)_{k,i}$ on $\hat{\CH}_\infty$ which is associated to the partial shifts given in Theorem \ref{thm:canon} but with a representation of $\Sset_\infty$ is called the canonical SCH of this representation of $\Sset_\infty$.
	An SCH is called symmetric if its saturation is the canonical SCH of a representation of $\Sset_\infty$. 	
\end{definition}

\section{The Graph of $\ssc$}

We may analyse the finer structure of an SCH by considering the relationships between the images of the morphisms of the semi-simplicial category $\ssc$. This we do in this section by constructing a graph associated with $\ssc$. Not only will this provide a co-ordinate system for SCHs, which we study in detail in Section \ref{S:labeled_subspaces}, but also a means to produce examples.

In its most basic form the graph can be thought of as a directed graph with finite subsets of $\NN_0$ as vertices and edges from vertex $u$ to $\e_i(u)$ for $0\le i \le \max u$ (where $\e_i$ is defined in Section \ref{S:intro}, or see below for a more detailed treatment). This directed graph is drawn in Figure \ref{F:Lambda_S} (where the colours can be ignored for now). Note that the choice of $\e_i$ is not unique, e.g. $\e_1(\{0,2\})=\e_2(\{0,2\})=\{0,3\}$. However, we will make use of some additional structure in the sequel, which we describe below.


First we note some basic facts about $\ssc$ and SCHs, and establish some notation.  
%
Each morphism $f$ of $\ssc$ is uniquely determined by its image $\im f$ and codomain $[n]$ (its domain $[m]$ is given by $m=\card{\im f}-1$). Moreover, $f:[m] \into [n]$ ($m<n$), can be written uniquely as

\[
f=\e_{i_k}^{[n]}\e_{i_{k-1}}^{[n-1]}\cdots \e_{i_1}^{[m+1]}, \text{ with } 0 \le i_1< i_2 <\cdots < i_k\le n,
\]
where $\im f = [n]\setminus \{i_1,i_2,\ldots,i_k\}$, $k:=n-m$, and $\e_p^{[l]}:[l-1] \into [l]$ is given by $\im \e_p^{[l]}=[l]\setminus \{p\}$ for all $l\in \NN_0,\; 0\le p \le l $.

  We write $\delta_f:=\Ff(f)$ and, for ease of notation, we write $\e_i$ for $\e_i^{[l]}$ when information about the domains and codomains
  is unnecessary or clear from the context, and $\delta_i$ for $\delta_{\e_i}$.    Thus, it is in this sense that an SCH is given by $\Hh_n = F[n]$ and $\delta_i=F(\e_i)$, as in Section \ref{sec:defs}.

We identify the identity morphisms of a (small) category $\cat$ with their corresponding objects and let $\cat^0$ denote the set of identity morphisms. Thus with little risk of confusion we denote the set of morphisms of $\cat$ by $\cat$ also, and write $\lambda \in \cat$ to mean that $\lambda$ is a morphism in $\cat$. We write the source (domain) and range (codomain) of a morphism $\lambda$ as $s(\lambda)$ and $r(\lambda)$, which give maps $s:\Lambda \into \Lambda^0$ and $r:\Lambda \into \Lambda^0$, respectively. For objects $u,v$ we let $v\cat:=r^{-1}(v)$, $\cat u:=s^{-1}(u)$ and $v \cat u := v\cat \cap \cat u$.


Consider the category $\Lambda_S$ with objects $\Lambda_S^0:=\{u  \subset \NN_0 : \text{ $u$ is finite}\}$ and, for
  $u,v \in \Lambda_S^0$, morphisms
  \begin{align*}
    v\Lambda_Su &= \{ \lambda \in v^u : \im \lambda = v,\; \exists\;  f \in \ssc \text{ such that } u \subseteq s(f),\; \lambda(x)=f(x)\; \forall\; x \in u \}\\
                &= \{\lambda \in v^u : \im\lambda = v,\; \forall\; x,y\in u,\; x<y \implies 0 \le \lambda(x)-x \le\lambda(y)-y \}
  \end{align*}
  Composition of morphisms is the composition of mappings.  Note that $\emptyset \in \Lambda_S^0$ and $\emptyset\Lambda_S\emptyset = \{\emptyset\}$.

Thus morphisms of $\Lambda_S$ are strictly increasing bijections and $\card{v\Lambda_Su} \le 1$, with $\card{u\Lambda_Sv}=\card{v\Lambda_Su}=1$ implying $u=v$ for all $u,v \in \Lambda_S^0$. Therefore $\Lambda_S$ is a partial order (in the sense of Mac Lane \cite[p.11]{MacLane1971}); it is isomorphic to the thin category associated to the partial order $\le$ on $\{u \subset \NN_0 : \text{$u$ is finite}\}$ given by
\[ u \le v \iff   v = f(u) \text{ for some $f \in \ssc$ with } u \subseteq s(f). \]
Hence we may also identify a morphism $\lambda \in \Lambda_S$ with $(r(\lambda), s(\lambda))$ and thus identify $\Lambda_S$ with
  \begin{align*}
    & \{(f(u),u) \in \Lambda_S^0 \times \Lambda_S^0 : f \in \ssc,\; u \subseteq s(f) \} \\
    = & \{(f(u),u) \in \Lambda_S^0 \times \Lambda_S^0 :  \text{$f$ is a strictly increasing map $f$ on $\NN_0$} \}.
  \end{align*}

 We may therefore form a directed graph associated with $\Lambda_S$ by taking $\Lambda_S^0$ as the vertex set and forming an edge for each $(v,u) \in \Lambda_S$ such that, for all $w \in \Lambda^0_S$, $u \le w \le v$ implies $w\in \{u,v\}$.  The edge set is therefore $\{(\e_i(u),u) : u \in \Lambda_S^0, 0\le i\le \max{u}\}$. The directed graph is illustrated in Figure \ref{F:Lambda_S} (if one ignores the colours).
  
  \begin{rem}
Not all strictly increasing bijections between finite subsets of $\NN_0$ are morphisms in $\Lambda_S$. For example, the strictly increasing bijection $\{0,2\} \into \{1,2\}, 0 \mapsto 1, 2\mapsto 2$ is not in $\Lambda_S$.
    \end{rem}
 
  Yet another description of $\Lambda_S$, which we will use in Section \ref{S:SCS_ext}  is as the partial order determined by the binary relation $\le$ on $\bigsqcup_{k=0}^\infty\NN_0^k$ (where $\NN^0_0:=\{\emptyset\}$) given by applying the usual order on $\NN_0$ coordinatewise to $\NN_0^k$, and $\emptyset \le \emptyset$. Explicitly, the latter partial order (viewed as a category) is $\mho_\infty:=\bigsqcup_{k\in\NN_0}\mho_k$ where for $k\in\NN_0$, $\mho_k:=\{ (n,m) : m,n \in \NN^k_0, m \le n\},\; r((n,m))=(n,n)\equiv n,\; s((n,m))=(m,m)\equiv m,\; (n,m)(m,l)=(n,l)$. An isomorphism $\Lambda_S \into \mho_\infty$ is given by
the bijection from $\Lambda_S^0$ onto $\mho_\infty^0$ defined by:
  \[ v=\{v_1,v_2,\ldots,v_k\} \mapsto (v_1,v_2-v_1-1,v_3-v_2-1,\ldots,v_k-v_{k-1}-1),\; \emptyset \mapsto \emptyset\]
  where $v_i<v_{i+1}$ for all $i$.

Now, for each $k \in \NN$, $\mho_k$ equipped with the degree functor $d_k:\mho_k\into \NN_0^k$ given by $d_k(n,m)=n-m$, is a rank $k$ graph \cite{KumjianPask2000}.  Moreover, $\mho_0$ with degree functor $d_0:\mho_0 \into \{0\}$ can be viewed as a rank 0 graph.  Extending to $\mho_\infty$  (and identifying a finite $k$-tuple with its canonical embedding in $\NN_0^\infty$) equips $\mho_\infty$ with a degree functor $d:\mho_\infty \into \NN_0^\infty$ satisfying the required factorisation property for $(\mho_\infty,d)$ to be an $\NN_0^\infty$-graph in the sense of \cite{BrownloweSimsVittadello2013}.
Thus we may view $\Lambda_S$ as an $\NN_0^\infty$-graph by pulling the degree functor back from $\mho_\infty$, so that for example $d(\varepsilon_i(u),u)=e_j$ if $u_j \le i < u_{j+1}$, where $u_n$ is the $n^\text{th}$ element of $u \in \Lambda^0_S$ in ascending order and $e_j$ is a canoncial generator of the monoid $\NN_0^k$. For all $n \in \NN_0^\infty$, we set $\Lambda_S^n:=d^{-1}(n)$, noting that by the factorisation property $d^{-1}(0)$ is indeed the previously defined $\Lambda_S^0$, which we view as the set of vertices of the graph.  With this in mind we make the following definitions.

\begin{defn}\label{D:rank-lev} By the graph of $\Delta_S$ we mean the $\NN_0^\infty$-graph $\Lambda_S$ constructed above.
The rank of a morphism $\lambda \in \Lambda_S$, denoted $\rank{\lambda}$, is defined to be $\card{s(\lambda)}$. The level of a vertex $v\in \Lambda_S^0$, denoted $\lev{v}$, is defined to be
  \[
    \lev{v}:=
    \begin{cases}
      \max v & \text{if } v \neq \emptyset,\\
      -1 & \text{if } v = \emptyset.
    \end{cases}
  \]
\end{defn}

We have $\Lambda_S = \bigsqcup_{k \in \NN_0} \Lambda_k$ where $\Lambda_k:=\{\lambda \in \Lambda_S : \rank{\lambda}=k\}\cong \mho_k$ is the sub-category of all morphisms of rank $k$, which when equipped with the restriction of the degree functor, is a $k$-graph.  Thus $\Lambda_S$ decomposes into disjoint connected $k$-graphs $\Lambda_k$ (in the sense that $u\Lambda_kv\cup v\Lambda_ku \neq \emptyset$ for every $u,v \in \Lambda^0_k$).

We may also visualise $\Lambda_S$ by means of its skeleton \cite[Definition 4.1]{HazlewoodRaeburnSimsWebster2013} (or more precisely, an obvious extension of the concept of the skeleton of a $k$-graph) as in Figure \ref{F:Lambda_S}, where the edges are morphisms of minimal non-zero degree with each minimal non-zero degree assigned a colour.

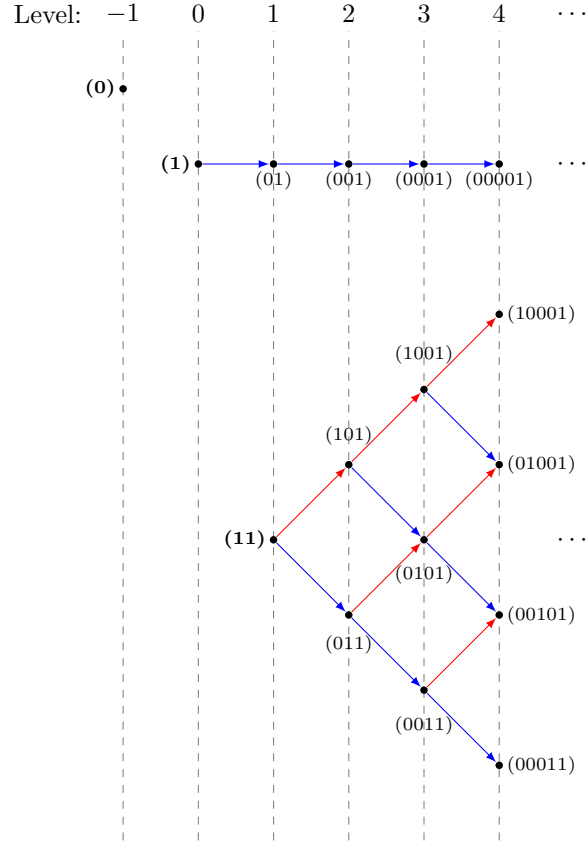
\begin{figure}[!h]
  \begin{tikzpicture}
    \node at (-1,1) {Level:};
 \node (D-1) at (0,1) {$-1$};
 \node (D0) at (1,1) {$0$};
 \node (D1) at (2,1) {$1$};
 \node (D2) at (3,1) {$2$};
 \node (D3) at (4,1) {$3$};
 \node (D4) at (5,1) {$4$};
 \node at (6,1) {$\dots$};
 
 \pgfmathtruncatemacro{\b}{-10}
 
  \draw [dashed,gray] (0,\b)-- (D-1);
  \draw [dashed,gray] (1,\b) -- (D0);
  \draw [dashed,gray] (2,\b) -- (D1);
  \draw [dashed,gray] (3,\b) -- (D2);
  \draw [dashed,gray] (4,\b) -- (D3);
  \draw [dashed,gray] (5,\b) -- (D4);
 
  \node [vertex] (0) at (0,0) {};
  \node [vertex] (1) at (1,-1) {};
  \node [vertex] (01) at (2,-1) {};
  \node [vertex] (001) at (3,-1) {};
  \node [vertex] (0001) at (4,-1) {};
  \node [vertex] (00001) at (5,-1) {};

  \node [vertex] (11) at (2,-6) {};
  \node [vertex] (101) at (3,-5) {};
  \node [vertex] (011) at (3,-7) {};
  \node [vertex] (1001) at (4,-4) {};
  \node [vertex] (0101) at (4,-6) {};
  \node [vertex] (0011) at (4,-8) {};
  \node [vertex] (10001) at (5,-3) {};
  \node [vertex] (01001) at (5,-5) {};
  \node [vertex] (00101) at (5,-7) {};
  \node [vertex] (00011) at (5,-9) {};

  \node at (6,-1) {$\cdots$};
  \node at (6,-6) {$\cdots$};
  \tikzset{every node/.style={above=2pt, midway,font=\scriptsize,text=black}}
  
  \draw [dedge] (1)--  node {$\e_0$} (01);
  \draw [dedge] (01)-- node {$\e_1$} (001);
  \draw [dedge] (001)-- node{$\e_2$} (0001);
  \draw [dedge] (0001)-- node{$\e_3$} (00001);

  \draw [bdedge] (11)-- node{$\e_0$} (011);
  \draw [bdedge] (011)-- node{$\e_1$}(0011);
  \draw [bdedge] (0011)-- node{$\e_2$} (00011);

  \draw [bdedge] (101)-- node{$\e_0$} (0101);
  \draw [bdedge] (0101)-- node{$\e_1$} (00101);

  \draw [bdedge] (1001)-- node{$\e_0$} (01001);

  \draw [rdedge] (11)-- node{$\e_1$} (101);
  \draw [rdedge] (011)-- node{$\e_2$} (0101);
  \draw [rdedge] (0011)-- node{$\e_3$} (00101);

  \draw [rdedge] (101)-- node{$\e_2$} (1001);
  \draw [rdedge] (0101)-- node{$\e_3$} (01001);

  \draw [rdedge] (1001)-- node{$\e_3$} (10001);

\tikzset{every node/.style={font={\scriptsize,\bfseries}}}
\node at (0) [left] {(0)};
\node at (1) [left] {(1)};
\node at (11) [left,outer ysep=5pt] {(11)};
\tikzset{every node/.style={font=\scriptsize}}
\node at (01) [below] {(01)};
\node at (001) [below] {(001)};
\node at (0001) [below] {(0001)};
\node at (00001) [below] {(00001)};
  
\node at (011) [below,outer ysep=5pt] {(011)};
\node at (101) [above,outer ysep=5pt] {(101)};

\node at (0011) [below,outer ysep=7pt] {(0011)};
\node at (0101) [below,outer ysep=7pt] {(0101)};
\node at (1001) [above,outer ysep=7pt] {(1001)};

\node at (10001) [right,outer ysep=7pt] {(10001)};
\node at (01001) [right,outer ysep=7pt] {(01001)};
\node at (00101) [right,outer ysep=7pt] {(00101)};
\node at (00011) [right,outer ysep=7pt] {(00011)};

\node at (2.5,-9.5) {$\vdots$};

\end{tikzpicture}
\caption{The skeleton of $\Lambda_S$ (only the rank 0, 1 and 2 components are shown), where black edges have degree 1, blue edges have degree $(1,0)$ and red edges have degree $(0,1)$. Dashed lines are to indicate the level of each vertex only; they are not part of the graph. The label on each vertex is its characteristic function written as a finite binary sequence. The label of a root is in bold.}\label{F:Lambda_S}

\end{figure}
\begin{defn}\label{D:root}
  We say that a vertex $v\in\Lambda_S^0$ is a root if $v\Lambda_S=\{v\}$. 
  For each $\lambda\in\Lambda_S$ we say that the root of $\lambda$ is $\root{\lambda}:=[\card{\lambda}-1]$.
\end{defn}

\begin{rems}
  \begin{enumerate}
  \item Note that $v$ is a root if and only if $v=[n]$ for some
    $n \in \NN_{-1}$, i.e., $v$ is an object in $\ssc$ or
    $v=\emptyset$.
   \item For every $v \in \Lambda_S^0$ we have
    $\rank{v} \le \lev{v}+1$, with equality if and only if $v$ is a
    root.

  \item A root $u$ is an initial object in $\Lambda_k$ for some $k$, i.e., given any other vertex $v \in \Lambda_k^0$, we have that $\card{v\Lambda_Su}=1$.
  \item For each $k \in \NN_0$, $[k-1]$ is the unique root in the $k$-graph $\Lambda_k$, and it is the root of every $\lambda \in \Lambda_k$.
    \item For each $\lambda \in \Lambda_S$, $\root{\lambda}$ is the unique root of the same rank as $\lambda$.
\end{enumerate}
\end{rems}

The graph $\Lambda_S$ provides a further example of an SCS.

\begin{example}
  Define an SCS $(X_k,\delta_i)$ by $X_k:=\bigcup_{-1\le m \le k}\ell^{-1}(m)$ (for $k \in \NN_{-1}$) and $\delta_i:X_{k-1} \into X_k, u \mapsto \e_i(u)$ for $i,k \in \NN_0$ and $i\le k $. We then have innovation sets $D_k=\ell^{-1}(k)$ for $k\in \NN_{-1}$. This SCS is saturated.
\end{example}

  We will find it useful in the sequel to identify a vertex $v \in \Lambda_S^0$, i.e. a finite subset of $\NN_0$, with its characteristic function $\chi=\chi_v : \NN_0 \into \{0,1\}$. When $0 \le \lev{v}=n<\infty$, we view $\chi$ as a finite binary sequence $(\chi_0,\chi_1,\ldots,\chi_n)$ with $\chi_n=1$, and we write $(0)$ for the sequence of 0s (corresponding to $\emptyset$). So for example, we identify $\{0,2,4\}$ with $(10101)$.  Thus we also view $\Lambda_S^0$ as the set of finite binary sequences ending in 1 together with $(0)$, which we will think of as labels of subspaces and vectors in the sequel.

Thus the level and rank from Definition \ref{D:rank-lev} become
\begin{align*}
  \lev{\chi} &= \begin{cases} \max \{i: \chi_i =1 \} & \text{if } \chi \neq (0),\\
    -1 & \text{otherwise,}
  \end{cases}\\
  \rank{\chi} &= \card{\{i : \chi_i=1\}},
\end{align*}
respectively, and $\rank{\chi} -1 \le \lev{\chi}$ with equality if and only if $\chi$ is a root.

Moreover, from Definition \ref{D:root}, the root of $\chi$, denoted $\underline{\chi}$, is the root with the same rank: $\rank{\underline{\chi}} = \rank{\chi}$.  Explicitly this is the label which starts with $\rank{\chi}$ entries equal to $1$ on the left and all other entries are $0$.

For any $\chi$ with $\lev{\chi}=n$ and $\lev{\underline{\chi}} = m \;(=\rank{\chi} -1)$ there is a unique morphism $f: [m] \rightarrow [n]$ so that $\chi_i=0$ if and only if $i$ is not in the image of $f$. 

The following observation will be used in Section 8.

\begin{lemma}\label{lem:canon}
Given $u,v \in \Lambda^0_S$ such that $u \le v$,
\begin{align*}
& u = \{m_1< \ldots <m_r\} \;\text{with label}\; \chi, \\
& v = \{n_1< \ldots <n_r\},
\end{align*}
then there is a unique morphism $f \in \Delta_S$ with $s(f)=[m_r] = [\lev{u}],\; r(f)=[n_r] = [\lev{v}]$ so that $f(u)=v$ and
\[
f=\e_{j_t} \ldots \e_{j_1} \;\text{and}\; \chi_{j_1}=1, (\e_{j_{s-1}}\ldots \e_{j_1}(\chi))_{j_s}=1 \; \text{for all }\; 2 \le s \le t.
\]
\end{lemma}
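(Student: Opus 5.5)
The plan is to use the normal form of morphisms in $\Delta_S$ recalled at the start of this section: every $f:[m_r]\to[n_r]$ can be written uniquely as $f=\e_{j_t}\cdots\e_{j_1}$ with $j_1<\dots<j_t$, where $\im f=[n_r]\setminus\{j_1,\dots,j_t\}$ and $t=n_r-m_r$. The conditions in the statement, however, are different. They say that the face map applied at each step must skip a position currently occupied by a $1$ of the running label. So the indices need not be increasing, and the proof has to build a suitable factorization directly.

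\emph{Uniqueness of $f$.} Since $u\le v$, there is a strictly increasing $g$ with $g(m_i)=n_i$. The hypothesis only asks for some $f\in\Delta_S$ with source $[m_r]$ and target $[n_r]$. Such an $f$ is determined on the elements of $u$, but not obviously elsewhere. I would therefore pin $f$ down through the condition on the $j_s$. The intended reading is that $f$ is uniquely determined by requiring that each gap between $n_{i-1}$ and $n_i$ be filled by values $f(x)$ with $x$ between $m_{i-1}$ and $m_i$, taken as far left as possible. Equivalently, the points $x\notin u$ satisfy $f(x)-x=f(m_{i-1})-m_{i-1}$ where possible, with all extra skipped positions placed immediately before the image of a $1$. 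So $f$ is the map that makes the skipped values $[n_r]\setminus\im f$ lie immediately to the left of some $n_i$. Concretely, $f(x)=x+(n_{i-1}-m_{i-1})$ for $m_{i-1}\le x<m_i$, with $n_0-m_0:=0$. This is strictly increasing because $n_i-m_i\ge n_{i-1}-m_{i-1}$, which is the defining inequality of $\Lambda_S$. Its image misses exactly the blocks $\{m_i+(n_{i-1}-m_{i-1}),\dots,n_i-1\}$.

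\emph{Existence of the word.} I would argue by induction on $t=n_r-m_r$. For $t=0$ we have $u=v$ and $f=\id$. For $t>0$, take the smallest $i$ with $n_i-m_i>n_{i-1}-m_{i-1}$. Put $j_1:=m_i$, so that $\chi_{j_1}=1$, and $u':=\e_{j_1}(u)$. Then $u'$ has the same elements as $u$ below $m_i$, and its elements from the $i$-th onwards are shifted by one. Hence $u\le u'\le v$ still holds coordinatewise, since the increments $n_k-m_k$ remain monotone after reducing the later ones by $1$. The induction hypothesis applied to $(u',v)$ gives a word whose first letter hits a $1$ of $\e_{j_1}(\chi)$, and so on. It remains to check that the composite equals $f$ as defined above. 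Both maps agree on $u$ and both skip exactly the positions immediately left of the images of the shifted $1$s, so they coincide.

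\emph{Uniqueness.} Any word satisfying the condition produces a map whose skipped positions are of the form ``right before the image of a $1$''. Reasoning level by level, this forces the shape of $f$ described above. The main obstacle is making this precise. Different orderings of the $j_s$ (for example, shifting the $1$ at $m_i$ before or after shifting later $1$s) give the same map $f$, and I would need to state clearly that uniqueness refers to $f$ and not to the word. I would verify this by checking that the cosimplicial identities $\e_j\e_i=\e_i\e_{j-1}$ preserve the ``hits a $1$'' property when two such letters are swapped.
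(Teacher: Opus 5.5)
You have the right map: your $f(x)=x+(n_{i-1}-m_{i-1})$ on $m_{i-1}\le x<m_i$ is exactly the explicit solution the paper writes down. The paper's proof consists only of asserting that the conditions force this solution. Your inductive existence argument, which shifts the first $1$ whose increment $n_i-m_i$ exceeds $n_{i-1}-m_{i-1}$, is also correct.

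\textbf{The gap is uniqueness}, which is the real content of the lemma. The paragraph headed ``Uniqueness of $f$'' only names the candidate, and the final paragraph does not supply an argument.

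\emph{The invariant you state is false as written.} Take $u=\{0\}$, $v=\{2\}$. The only admissible word (each letter shifts a current $1$) is $\e_1\e_0$. Its skipped positions are $0$ and $1$, and $0$ is not immediately to the left of an image of a $1$.

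\emph{The swap plan fails on two counts.} First, the identity $\e_{a+1}\e_a=\e_a\e_a$ turns an admissible word into an inadmissible one, since after $\e_a$ the label has a $0$ at position $a$. Second, even a correct closure statement (``admissible words are closed under swaps'') would not show that two arbitrary admissible words are connected by swaps. That connectivity is what uniqueness of $f$ requires.

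\textbf{What is missing is the following invariant}, proved by induction on $s$: each position skipped by $\e_{j_s}\cdots\e_{j_1}$ is followed either by another skipped position or by an element of $w_s:=\e_{j_s}\cdots\e_{j_1}(u)$.
\begin{itemize}
\item The new skip $j_s$ is followed by $j_s+1\in w_s$ precisely because $j_s\in w_{s-1}$. This is where the label condition enters.
\item The map $\e_{j_s}$ keeps $p$ and $p+1$ adjacent unless $p+1=j_s$. In that case $p$ is now followed by the new skip $j_s$.
\end{itemize}
Set $m_0=n_0:=-1$. By the invariant, the skipped positions inside each gap $(n_{i-1},n_i)$ form a terminal segment of that gap. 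Monotonicity together with $f(m_i)=n_i$ puts exactly $m_i-m_{i-1}-1$ image points in that gap. Hence $\im f$ is determined, and a morphism of $\Delta_S$ with given domain and codomain is determined by its image.

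\textbf{Alternatively, you can rescue the swap idea} by recording, for each letter, which $1$ it shifts (the $k$-th element of the running set).
\begin{itemize}
\item Shifting the $k$-th element moves the $k$-th through $r$-th elements. So $f(u)=v$ forces the $k$-th element to be shifted exactly $(n_k-m_k)-(n_{k-1}-m_{k-1})$ times.
\item An admissible word is determined by its sequence of such indices.
\item Adjacent letters with distinct indices are exchanged by a cosimplicial identity that preserves admissibility.
\end{itemize}
Consequently every admissible word can be sorted into your greedy word, so all admissible words give the same $f$.
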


We can think of this $f$ as a canonical representative in $\Delta_S$ of the unique morphism in $\Lambda_S$ from $u$ to $v$.
Here the labels play a role as control bits controlling which $\e_i$ we are allowed to use in the product.
The lemma can be proved by verifying that the conditions on $f$ lead to a unique solution which is
\[
f(k) = 
\left\{
\begin{array}{lll}
k & \mbox{if }  k < m_1 \\
n_i + (k-m_i) & \mbox{if }  m_i \le k < m_{i+1}, \; i=1, \ldots, r-1, \\
n_r & \mbox{if }  k = m_r.
\end{array}
\right.
\]

\section{Labeled Subspaces}{\label{S:labeled_subspaces}}

In this section we investigate the role of the graph $\Lambda_S$, as introduced in the previous section, for general SCHs. Recall that the vertices are given by finite subsets of $\Nset_0$. In this section we mainly use the notation based on characteristic functions which we refer to as labels $\chi$. 
Let $(\CH_k, \delta_i)_{k,i}$ be any SCH.

\begin{definition}
	If $\underline{\chi}$ is the root with $\lev{\underline{\chi}} = k$ then we define
	\begin{align*}
	L_{\underline{\chi}} 
    &:= \{ x \in \CH_k: x \perp \delta_i (\CH_{k-1})\quad \text{for}\;\; i=0,\ldots,k \} \\
    & \;= \{ x \in \CH_k: x \perp \alpha_i (\CH_{k-1})\quad \text{for all}\;\; i \in \Nset_0  \},
      \end{align*}
      in particular $L_{\underline{\chi}} = \CD_k$ for $k=-1$ and $k=0$. 
	We also refer to any $0 \not= x \in L_{\underline{\chi}}$ as a root vector
	of level $\lev{\underline{\chi}} = k$ of the SCH.
	
If $\chi$ is any label with $\underline{\chi}$
the corresponding root, i.e. $\rank{\underline{\chi}} = \rank{\chi}$, 
then there is a unique morphism f in $\Delta_S$ with domain $[\lev{\underline{\chi}}]$ and so that 
$\chi$ is the label of $im(f)$ (see Remark 5.4(3)).
Then the functor provides an isometry $\delta_f: \CH_{\lev{\underline{\chi}}} \rightarrow \CH_{\lev{\chi}}$. We define
\[
L_\chi := \delta_f L_{\underline{\chi}} \quad (\subset \CH_{\lev{\chi}}).
\]
We refer to the $L_\chi$ as labeled subspaces.
We have the partial isometry $\alpha_\chi: \CH_\infty \rightarrow \CH_\infty$ with initial space 
$L_{\underline{\chi}}$ and final space $L_\chi$ provided by the restriction of $\delta_f$.
We can then also write
\[
L_\chi = \alpha_\chi L_{\underline{\chi}}.
\]
\end{definition}

We state some immediate observations. For the root $\underline{\chi}$ with $\lev{\underline{\chi}}=k$ we can also write
\[
L_{\underline{\chi}} = \{ x \in \CD_k: x \perp \alpha_i (\CH_{k-1})\quad \text{for all}\; i \in \Nset_0  \}
\] 
(because for $i > k-1$ we have $\alpha_i(\CH_{k-1}) = \CH_{k-1}$ and hence
$L_{\underline{\chi}} \subset D_k$) and also
\[
L_{\underline{\chi}} = \{ x \in \CD_k: x \perp \alpha_i (\CD_{k-1})\quad \text{for all}\; i \in \Nset_0  \}
\]
(because $\CH_{k-1} = \bigoplus^{k-1}_{\ell=-1} \CD_\ell$ and for $\ell \le k-2$ we have $\alpha_i (\CD_\ell) \subset \CH_{k-1} \perp D_k$). 
In particular the labeled subspaces $L_{\underline{\chi}}$
of different roots are orthogonal to each other. 
Note that $x \in \CD_k$ is a root vector of level $k$ of the SCH, i.e.\! is in $L_{\underline{\chi}}$ for the root $\underline{\chi}$ of level $k$, if and only if it is orthogonal to all other labeled subspaces $L_\chi$ with level $\lev{\chi}\le k$. Hence $\CH_k$ is the closed linear span of the labeled subspaces $L_\chi$ with level $\lev{\chi} \le k$ and $\CH_\infty$ is the closed linear span of all labeled subspaces. This can be checked by induction, starting with 
$\CD_{-1} = L_0$ and $\CD_0 = L_1$. Then $x \in L_{11}$ if $x \in \CD_1$ and $x$ is orthogonal to $L_{01} = \alpha_0 (L_1)$, etc.
The latter is an instance of the general (and useful) formula
\[
\alpha_i L_\chi = L_{\epsilon_i({\chi})},
\]
where $\epsilon_i$ is the operation on labels which inserts $0$ at position $i$, so
\[
\epsilon_i(\chi)_j =
\left\{
\begin{array}{ll}
\chi_j & \mbox{if } j<i \\
0 & \mbox{if } j=i \\
\chi_{j-1} & \mbox{if } j>i 
\end{array}
\right.
\]
\begin{definition}
If $\chi$ and $\chi'$ are labels with the same rank, $\rank{\chi}=\rank{\chi'}$, then we define the partial isometry $\alpha_{\chi,\chi'} := \alpha_{\chi'} \alpha^*_\chi$.
The initial space is $L_\chi$, the final space is $L_{\chi'}$.
\end{definition}

For the root $\underline{\chi}$ of $\chi$ we have $\alpha_{\underline{\chi},\chi} = \alpha_\chi$. These partial isometries will play an important role and we will make use of the fact that whenever we have two labels with the same rank then we have such a (uniquely defined) partial isometry between the labeled subspaces. 

We can think of the construction of labeled subspaces as a functor from the category $\Lambda_S$ to the category of Hilbert spaces with isometries. Let us write $\delta_{\chi,\chi'}$ instead of $\alpha_{\chi,\chi'}$ if we think of these maps as surjective isometries from $L_\chi$ to $L_{\chi'}$ instead of partial isometries on $\CH_\infty$. Then the functor is the following:
For the objects we do $\chi \mapsto L_\chi$ and the image of the unique morphism in $\Lambda_S$ from $\chi$ to $\chi'$ (if it exists, i.e., if $\chi \le \chi'$) is $\delta_{\chi,\chi'}$. It is this functorial property which makes this construction useful.
\\

The relative position of labeled subspaces gives insights into the structure of an SCH.
For all SCHs, by the cosimplicial identities $\delta_j \delta_i = \delta_i \delta_{j-1}$, for $0 \le i < j \le k+1$ we always have

\[
\delta_j \,(\delta_i \CH_{k-1}) \quad\big( = \delta_i \delta_{j-1} \CH_{k-1} \big)\quad \subset \delta_i \CH_k
\]  

In the following we will see that the relative position of the labeled subspaces depends on additional information about adjoints.

\begin{lemma}\label{adjoint}
For a braided SCH with Artin generators $(\sigma_i)_{i \ge 1}$ we have
\[
\alpha_i^* \sigma_{j+1} = \sigma_j \alpha_i^* \quad \text{for} \; 0 \le i < j.
\]
For a saturated SCH
and $\delta_i: \CH_{k-1} \rightarrow \CH_k$
(with $0 \le i \le k$) we have $\delta_i^* = \alpha_i^*|_{\CH_k}$. 
Hence if the SCH is braided and saturated we also have
$\delta_i^* \sigma_{j+1} = \sigma_j \delta_i^* \quad \text{for} \; 0 \le i < j \le k+1$ (on $\CH_k$).
\end{lemma}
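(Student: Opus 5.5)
The argument has three parts, taken in the order of the statement.

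\emph{First part.} For a braided SCH we use the factorization $\alpha_i = u_{i+1}\alpha_{i+1}$ with $u_m=\sigma_m$. By Proposition \ref{prop:fix} the partial shifts come from the canonical Hessenberg factorization, so Lemma \ref{lem:braided}(1) holds: $\sigma_{j+1}\alpha_i = \alpha_i\sigma_j$ for $i<j$. Taking adjoints gives $\alpha_i^*\sigma_{j+1}^* = \sigma_j^*\alpha_i^*$. Since all $\sigma_m$ are unitary, we multiply on the left by $\sigma_j$ and on the right by $\sigma_{j+1}$. This yields $\sigma_j\alpha_i^* = \alpha_i^*\sigma_{j+1}$, which is the claim. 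The only point needing care is that the braid relations hold on the whole of $\CH_\infty$, not merely on the range of $\alpha_{j+1}$. This is guaranteed because the representation of $\Bset_\infty$ lives on $\CH_\infty$, and Theorem \ref{thm:canon} with Lemma \ref{lem:braided} then gives (1) everywhere.

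\emph{Second part.} Assume the SCH is saturated and take $x\in\CH_k$ and $y\in\CH_{k-1}$. Then $\langle \delta_i^*x,y\rangle = \langle x,\alpha_i y\rangle = \langle \alpha_i^*x,y\rangle$. So $\delta_i^* x = P_{k-1}\alpha_i^* x$, and it suffices to show that $\alpha_i^*(\CH_k)\subset \CH_{k-1}$. For this we take adjoints in Lemma \ref{L:cosimplicial_ids_for_proj}, which gives $\hat P_{n-1}\alpha_i^* = \alpha_i^*\hat P_n$ for $i\le n$. In the saturated case $\hat P_n = P_n$. With $n=k$ we get $P_{k-1}\alpha_i^* x = \alpha_i^* P_k x = \alpha_i^* x$ for $x\in\CH_k$. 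Hence $\alpha_i^* x\in\CH_{k-1}$ and $\delta_i^*=\alpha_i^*|_{\CH_k}$. This step, where saturation is used through the mean ergodic theorem hidden in Lemma \ref{L:cosimplicial_ids_for_proj}, is the main point of the lemma.

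\emph{Third part.} Now combine the two parts on $\CH_k$ for $0\le i<j\le k+1$. By Proposition \ref{prop:braided}, $\CH_k = \CH^{\sigma_\ell,\ell\ge k+2}$. By (H2), $\sigma_{j+1}$ with $j+1\le k+2$ leaves $\CH_k$ invariant: for $j+1=k+2$ it acts as the identity, and otherwise (H2) applies directly. So for $x\in\CH_k$ we have $\delta_i^*\sigma_{j+1}x = \alpha_i^*\sigma_{j+1}x = \sigma_j\alpha_i^*x = \sigma_j\delta_i^*x$.
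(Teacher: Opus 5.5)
Your first two parts are correct and essentially follow the paper. The paper recomputes $\sigma_{j+1}^*\alpha_i=\alpha_i\sigma_j^*$ from the braid relations, which is exactly what Lemma~\ref{lem:braided}(1) gives once unitarity is used. For $\alpha_i^*\CH_k\subset\CH_{k-1}$ the paper goes through Theorem~\ref{T:saturation} ($\alpha_i\CH_{k-1}^\perp\subset\CH_k^\perp$). You instead take adjoints in Lemma~\ref{L:cosimplicial_ids_for_proj} directly, which needs saturation only at level $k-1$.

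The third part rests on a false claim. (H2) gives invariance of $\CH_\ell$ under $u_m$ only for $m\le\ell$. So $\CH_k$ is invariant under $\sigma_{j+1}$ for $j+1\le k$ and fixed by $\sigma_{k+2}$. The admissible case $j=k$, i.e.\ $\sigma_{k+1}$, is not covered, and there invariance genuinely fails: one only has $\sigma_{k+1}\CH_k\subset\CH_{k+1}$. For example, take the permutation representation of $\Sset_\infty$ on $\ell^2(\NN_0)$ (the prototypical example). There $\CH_k=\spn\{e_0,\dots,e_k\}$ and $\sigma_{k+1}e_k=e_{k+1}\notin\CH_k$. Hence for $j=k$ your step $\delta_i^*\sigma_{j+1}x=\alpha_i^*\sigma_{j+1}x$ cannot be justified with $\delta_i^*$ taken as the adjoint of $\delta_i\colon\CH_{k-1}\to\CH_k$.

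The repair is short. Read the left-hand $\delta_i^*$ as the adjoint of $\delta_i\colon\CH_k\to\CH_{k+1}$. This is exactly how the identity is used in the proof of $(b)\Rightarrow(c)$ in Theorem~\ref{thm:label}, where $\delta_i^*$ is applied after $\delta_j=\sigma_{j+1}\cdots\sigma_{k+1}|_{\CH_k}$. For $x\in\CH_k$ and $j\le k+1$ one has $\sigma_{j+1}x\in\CH_{k+1}$: use (H2) for $\CH_{k+1}$ when $j\le k$, and (H1) when $j=k+1$. Your second part applied at level $k+1$ (valid since $i\le k$) gives $\delta_i^*=\alpha_i^*|_{\CH_{k+1}}$. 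Then $\delta_i^*\sigma_{j+1}x=\alpha_i^*\sigma_{j+1}x=\sigma_j\alpha_i^*x=\sigma_j\delta_i^*x$. The paper's one-word ``Hence'' leaves this domain bookkeeping implicit, but your explicit invariance claim is wrong as stated.
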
 

\begin{proof}
The $\sigma_i$ are unitary, hence $\sigma_i^{-1} = \sigma_i^*$. Further note that the braid relations imply that $\sigma_j \sigma_{j+1} \sigma_j^{-1} = \sigma_{j+1}^{-1} \sigma_j \sigma_{j+1}$ for all $i$. Hence
\begin{align*}
\sigma_{j+1}^* \alpha_i 
& = \sigma_{j+1}^{-1} \sigma_{i+1} \sigma_{i+2} \ldots = \sigma_{i+1} \ldots \sigma_{j-1} \sigma_{j+1}^{-1} \sigma_j \sigma_{j+1}	\sigma_{j+2} \ldots \\
& = \sigma_{i+1} \ldots \sigma_{j-1} \sigma_j \sigma_{j+1} \sigma_j^{-1} \sigma_{j+2} \ldots
= \alpha_i \sigma_j^{-1} = \alpha_i \sigma_j^*.
\end{align*}
Taking adjoints yields the first result.
If the SCH is saturated then Theorem \ref{T:saturation} 
implies that $\alpha_i \CH_{k-1}^\perp \subset \CH_k^\perp$ and hence $\alpha_i^* \CH_k \subset \CH_{k-1}$. This shows that the adjoint of $\delta_i = \alpha_i |_{\CH_{k-1}}$ is $\delta_i^* = \alpha_i^* |_{\CH_k}$. 	
\end{proof}

This lemma seems to be the only part of this paper where we seriously use the fact that we have a (unitary) representation of the (full) braid group $\Bset_\infty$, instead of the braid monoid $\Bset^+_\infty$ considered in \cite{EGK17}.

\begin{definition}
An SCH $(\CH_k, \delta_i)_{k,i}$	is called normal if
\[
\delta_{j-1} \delta_i^* = \delta_i^* \delta_j: \CH_k \rightarrow \CH_k
\quad \text{for all} \;\; 0 \le i < j \le k+1.
\]
\end{definition}


The following diagram shows on the left the cosimplicial identities valid for all SCHs and on the right the additional identities valid only for normal SCHs.

\[
\begin{tikzcd}
& \text{SCH} & & \text{normal SCH} &\\
& \CH_k \arrow[r, "\delta_j"] & \CH_{k+1}   
& \CH_k \arrow[r, "\delta_j"]
\arrow[dl, "\delta_i^*" near start]
& \CH_{k+1} \arrow[dl, "\delta_i^*" near start] \\
\CH_{k-1} \arrow[ur, "\delta_i"] 
\arrow[r, "\delta_{j-1}"]	
& \CH_k \arrow[ur, "\delta_i"] & 
\CH_{k-1} \arrow[r, "\delta_{j-1}"]	& \CH_k & \\
\end{tikzcd}
\]

\begin{theorem} \label{thm:label}
Let $(\CH_k, \delta_i)_{k,i}$ be an SCH. 
\\
The following assertions are equivalent:
\begin{itemize}
	\item[(a)] $(\CH_k, \delta_i)_{k,i}$ is symmetric and saturated \\
	(i.e., the canonical SCH of a unitary representation of $\mathbb{S}_\infty$).
	\item[(b)] $(\CH_k, \delta_i)_{k,i}$ is braided and saturated \\
	(i.e., the canonical SCH of a unitary representation of $\mathbb{B}_\infty$).
	\item[(c)] $(\CH_k, \delta_i)_{k,i}$ is normal.
	\item[(d)] 
	$\delta_j (\CH_k \ominus \delta_i(\CH_{k-1})) \subset \CH_{k+1} \ominus \delta_i(\CH_k)$ (for all $0 \le i <j \le k+1 $).
	\item[(e)] The labeled subspaces $L_\chi$ are orthogonal to each other (for all $\chi \in \Lambda^0_S$).
\end{itemize}	
\end{theorem}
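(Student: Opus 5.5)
The plan is to prove the cycle (a)$\Rightarrow$(b)$\Rightarrow$(c)$\Rightarrow$(d)$\Rightarrow$(e)$\Rightarrow$(a).

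\emph{(a)$\Rightarrow$(b)$\Rightarrow$(c)$\Rightarrow$(d).} The implication (a)$\Rightarrow$(b) is immediate, since $\Sset_\infty$ is a quotient of $\Bset_\infty$ (Definition \ref{def:symmetric}, Corollary \ref{cor:braided}). For (b)$\Rightarrow$(c) I use the Hessenberg description: on $\CH_k$ we have $\delta_j=\alpha_j|_{\CH_k}=\sigma_{j+1}\cdots\sigma_{k+1}$. By Lemma \ref{adjoint}, in the braided and saturated case, $\delta_i^*\sigma_{\ell+1}=\sigma_\ell\delta_i^*$ for $i<\ell$. Pushing $\delta_i^*$ through the product one factor at a time gives
\[
\delta_i^*\delta_j=\sigma_j\cdots\sigma_k\,\delta_i^*=\delta_{j-1}\delta_i^* \quad\text{on } \CH_k ,
\]
and every index used satisfies $i<\ell\le k+1$, so the lemma applies. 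For (c)$\Rightarrow$(d), take $x\in\CH_k\ominus\delta_i(\CH_{k-1})$, so that $\delta_i^*x=0$. Normality gives $\delta_i^*\delta_jx=\delta_{j-1}\delta_i^*x=0$, i.e.\ $\delta_jx\perp\delta_i(\CH_k)$.

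\emph{(d)$\Rightarrow$(e).} I prove by induction on the level the following claim: if $\chi$ has level $n$ and $\chi_i=1$, then $L_\chi\perp\delta_i(\CH_{n-1})$. For roots this is the definition of $L_{\underline{\chi}}$. For a non-root $\chi$, write $L_\chi=\alpha_jL_{\chi'}$ with $\chi=\epsilon_j(\chi')$. For positions $i<j$ with $\chi_i=1$ we have $\chi'_i=1$, so the induction hypothesis gives $L_{\chi'}\perp\delta_i(\CH_{n-2})$, and (d) then yields $L_\chi\perp\delta_i(\CH_{n-1})$. The case $i=n$ is automatic: $L_\chi\subset\CD_n$, because $\delta_n(\CH_{n-1})=\CH_{n-1}$, and this also gives orthogonality between labeled subspaces of different levels.

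The genuine obstacle is positions $i>j$ with $\chi_i=1$. I would first try choosing $j$ as the \emph{last} zero of $\chi$, so that the only such $i$ are the consecutive ones $j<i\le n$. I would then handle these using the cosimplicial identity $\delta_i\delta_j=\delta_j\delta_{i-1}$ together with the inductive claim for $\chi'$ at position $i-1$. If that fails, the fallback is the canonical factorization of Lemma \ref{lem:canon}, whose control-bit condition seems designed precisely to permit only such inductive steps.

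Granting the claim, take two distinct labels $\chi\ne\chi'$ of the same level. Choose a position $i$ with $\chi_i=1$ and $\chi'_i=0$. Then $L_{\chi'}\subset\delta_i(\CH_{n-1})$, because $\chi'=\epsilon_i(\cdot)$, while $L_\chi\perp\delta_i(\CH_{n-1})$ by the claim, so $L_\chi\perp L_{\chi'}$.

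\emph{(e)$\Rightarrow$(a).} Orthogonality gives $\CH_\infty=\bigoplus_\chi L_\chi$. Define $\sigma_m$ for $m\ge1$ on each $L_\chi$ to be $\alpha_{\chi,s_m\chi}$, where $s_m$ swaps entries $m-1$ and $m$ of the label; when these two entries coincide, $\sigma_m$ is the identity on $L_\chi$. A label remains a label after swapping unless $m$ exceeds the level, and in that case both entries are $0$, so no issue arises. Each $\sigma_m$ is an involutive unitary because the $\alpha_{\chi,\chi'}$ are partial isometries between orthogonal summands.

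The Coxeter relations of $\Sset_\infty$ reduce to identities among the maps $\alpha_{\chi,\chi'}$. These hold by the functoriality $\alpha_{\chi',\chi''}\alpha_{\chi,\chi'}=\alpha_{\chi,\chi''}$, which follows from $\alpha_{\chi'}\alpha_{\chi'}^*=P_{L_{\chi'}}$.

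It remains to check that this representation reproduces the structure of the SCH. First, $\sigma_{i+1}\cdots\sigma_{n+1}$ acts on $L_\chi\subset\CH_{n-1}$ by moving a $0$ from position $n$ to position $i$, and this is exactly $\alpha_i L_\chi=L_{\epsilon_i(\chi)}$, so Proposition \ref{prop:fix}(a) holds. Second, the common fixed space of $\sigma_\ell$ for $\ell\ge k+2$ is the sum of the $L_\chi$ with no $1$ beyond position $k$, which is $\CH_k$. Hence the SCH is saturated, and it is the canonical SCH of this representation of $\Sset_\infty$.
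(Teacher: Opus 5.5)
Your steps (a)$\Rightarrow$(b)$\Rightarrow$(c)$\Rightarrow$(d) coincide with the paper's. Your (e)$\Rightarrow$(a) is essentially the paper's construction. Two small points there. Getting saturation from the fixed space of the $\sigma_\ell$, $\ell\ge k+2$, works once you note that a component on a label with a $1$ beyond position $k$ lies on an infinite orbit of equal norms, and so vanishes. Also, the groupoid identity uses the initial projection $\alpha_{\chi'}^*\alpha_{\chi'}=P_{L_{\underline{\chi}}}$, not the final one.

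The genuine gap is in (d)$\Rightarrow$(e), in two places.

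\emph{The case $i=n$ is not automatic.} The identity $\delta_n(\CH_{n-1})=\CH_{n-1}$ merely rewrites this case of your claim as $L_\chi\subset\CD_n$, and that is false for general SCHs. In Example \ref{E:basic_examples}(2) one has $L_{111}=\CH_2=\spn\{0,2\}$ and $L_{1011}=\alpha_1L_{111}=\spn\{0,3\}$. This contains the basis vector $0\in X_2$. You rely on exactly this case for orthogonality across different levels, so it has to be derived from (d).

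\emph{The case $i>j$ is left open, and the tools you name do not close it.} The cosimplicial identity together with the hypothesis $L_{\chi'}\perp\delta_{i-1}(\CH_{n-2})$ only gives $L_\chi=\delta_jL_{\chi'}\perp\delta_j\delta_{i-1}(\CH_{n-2})=\delta_i\delta_j(\CH_{n-2})$. That is only part of $\delta_i(\CH_{n-1})$.

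The missing step is a second application of (d), to the pair $j<i$ at level $k=n-1$. Split $\CH_{n-1}=\delta_j(\CH_{n-2})\oplus\bigl(\CH_{n-1}\ominus\delta_j(\CH_{n-2})\bigr)$. The first summand is handled as above. For the second, (d) gives $\delta_i\bigl(\CH_{n-1}\ominus\delta_j(\CH_{n-2})\bigr)\subset\CH_n\ominus\delta_j(\CH_{n-1})$, which is orthogonal to $L_\chi\subset\delta_j(\CH_{n-1})$. This works for every $i>j$ with $\chi_i=1$, so you need not take $j$ as the last zero. For $i=n$ it is exactly the instance of (d) whose larger index equals $k+1$, i.e.\ the part of (d) that encodes saturation. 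With this added, your pointwise claim and your concluding step go through.

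The paper avoids this case altogether. It inducts on pairwise orthogonality of all $L_\chi$ of level $\le k-1$. Labels with a common zero are separated using the isometry $\alpha_i$ and the induction hypothesis. Otherwise it takes the minimal zero position $i$ (of $\chi$, say) and a zero $j>i$ of $\chi'$. So (d) is only ever applied with the factorizing zero to the right of the tested position. Different levels are covered automatically, since a lower-level label has a $0$ at position $k$.
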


\begin{proof}
	$(a) \Rightarrow (b)$ is trivial because a unitary representation of $\Sset_\infty$ yields a unitary representation of $\Bset_\infty$ which factors through it.  
To prove $(b) \Rightarrow (c)$ we think of 
$\delta_{j-1} \delta_i^*$ and $\delta_i^* \delta_j$ as maps from $\CH_k$ to $\CH_k$ (for $0 \le i <j \le k+1$). 
Assuming $(b)$ we can apply Lemma
\ref{adjoint} to obtain
\[
\delta_{j-1} \delta_i^* = \sigma_j \ldots \sigma_k \delta_i^* = \delta_i^* \sigma_{j+1} \ldots \sigma_{k+1} = \delta_i^* \delta_j 
\]
$(c) \Rightarrow (d)$. Note that $\CH_k \ominus \delta_i \CH_{k-1}$ resp. $\CH_{k+1} \ominus \delta_i \CH_k$ are the kernels of $\delta_i^*: \CH_k \rightarrow \CH_{k-1}$ resp.
$\delta_i^*: \CH_{k+1} \rightarrow \CH_k$. Hence (d) says exactly that $\delta_j$ maps the first kernel into the second and this indeed follows from (c). 

$(d) \Rightarrow (e)$.
We prove orthogonality of the labeled subspaces $L_\chi$ by induction with respect to $\lev{\chi}$. 
If $\lev{\chi}=-1$ or $\lev{\chi}=0$ we consider $L_0 = \CD_{-1}$ and $L_1 = \CD_0$ which are orthogonal by definition. Assume, as the induction hypothesis, that all $L_\chi$ with $\lev{\chi} \le k-1$ are orthogonal to each other, where $k \ge 1$. 
Consider $L_\chi$ and $L_{\chi'}$ with $\lev{\chi}, \lev{\chi'} \le k$ and $\chi \not= \chi'$. If $\chi$ or $\chi'$ is a root then $L_\chi$ and $L_{\chi'}$ are orthogonal by the definition of labeled subspaces. Hence, to complete the proof, we can assume that both $\{\chi_0, \ldots \chi_k\}$ and $\{\chi'_0, \ldots \chi'_k\}$ contain both $0$'s and $1$'s. 
If there exists $0 \le i \le k$ such that $\chi_i = 0 = \chi'_i$ then there exists labels $\chi^i \not= \chi'^i$ with $\lev{\chi^i}, \lev{\chi'^i} \le k-1$ such that
\[
L_\chi = \alpha_i L_{\chi^i}, \quad
L_{\chi'} = \alpha_i L_{\chi'^i}.
\]
$L_{\chi^i}$ and $L_{\chi'^i}$ are orthogonal to each other by assumption. Because $\alpha_i$ is an isometry it follows that $L_{\chi}$ and $L_{\chi'}$ are orthogonal to each other as well.

It remains to consider the case that there is no $0 \le i \le k$ so that both $\chi_i =0$ and $\chi'_i =0$. Choose $i$ minimal with the property that $\chi_i=0$ or $\chi'_i=0$. Say $\chi_i=0$. Then $\chi'_i \not=0$ but there exists $i < j (\le k)$ so that $\chi'_j =0$. Hence there exist labels $\chi^i$ and $\chi'^j$ with $\lev{\chi^i}, \lev{\chi'^j} \le k-1$ such that
\[
L_\chi = \alpha_i L_{\chi^i}, \quad
L_{\chi'} = \alpha_j L_{\chi'^j}.
\]
Because $\chi'_i \not= 0$ and $i<j$ we also have $\chi'^j_i \not= 0$. From the induction 
hypothesis we conclude that
\[
L_{\chi'^j} \subset \CH_{k-1} \ominus \delta_i \CH_{k-2}.
\] 
We can now use (d) to infer that 
\[
L_{\chi'} = \alpha_j L_{\chi'^j} \subset \CH_{k} \ominus \delta_i \CH_{k-1},
\]
which is indeed orthogonal to
$L_\chi = \alpha_i L_{\chi^i} \subset \delta_i \CH_{k-1}$. This concludes the proof by induction.

$(e) \Rightarrow (a)$.
From the orthogonality of the labeled subspaces we get (for all $k \ge -1$) that
\[
\CH_k = \bigoplus_{\lev{\chi}\le k} L_\chi, \quad
\CD_k = \bigoplus_{\lev{\chi}= k} L_\chi,
\]
where $\bigoplus$ stands for orthogonal direct sums.
Hence $\alpha_{k+1}(\CD_\ell) \subset \CD_{\ell+1}$ if $\ell \ge k+1$. 
So, by Theorem \ref{T:saturation},
the fixed point space $\CH^{\alpha_{k+1}}$ is equal to $\CH_k$ and we conclude that the SCH is saturated. 
We now want to construct a unitary representation of $\Sset_\infty$ on $\CH_\infty$ 
so that the given SCH is canonical for this representation. Note that the elements of $\Sset_\infty$ can be realized as finite permutations (of $\Nset_0$) and hence it acts on the set $\Lambda^0_S$ of labels by permutations. 
Such permutations preserve the rank $\rank{\cdot}$ and hence we can consider the corresponding partial isometries on $\CH_\infty$. Explicitly, if $\pi \in \Sset_\infty$ acts on $\chi \in \Lambda^0_S$
to produce $\pi.\chi \in \Lambda^0_S$ then, making use of the orthogonality of labeled subspaces, we can define 
\begin{align*}
u_\pi: \CH_\infty = \bigoplus_{\chi \in \Lambda^0_S} L_\chi & \rightarrow \CH_\infty = \bigoplus_{\chi \in \Lambda^0_S} L_\chi \\
x = \bigoplus_{\chi \in \Lambda^0_S} x_\chi & \mapsto \bigoplus_{\chi \in \Lambda^0_S} \alpha_{\chi,\pi.\chi}\; x_\chi
\end{align*}
and then $\pi \mapsto u_\pi$ is a unitary representation of $\Sset_\infty$ on $\CH_\infty$.
For $n \ge 1$ let $\sigma_n$ be the transposition of position $n-1$ and $n$ and let us write $u_n$ instead of $u_{\sigma_n}$. Then it is easily checked that the operation $\epsilon_n$, the insertion of $0$ at position $n$ of a label, can be written as
\[
\epsilon_n(\chi) = \lim_{m\to\infty} \sigma_{n+1} \sigma_{n+2} \ldots \sigma_{m}(\chi)
\]
and hence 
\[
\alpha_n = \text{sot}-\lim_{m\to\infty} u_{n+1} u_{n+2} \ldots u_{m}.
\]
It is now clear that $(u_m)_{m\ge 1}$ is a Hessenberg factorization for the partial shifts $(\alpha_n)_{n\in\Nset_0}$ and the given SCH is the canonical SCH for the unitary representation $\pi \mapsto u_\pi$ of $\Sset_\infty$. This proves that we have a symmetric SCH. 
\end{proof}

Let us put together some useful formulas about labeled subspaces for a normal SCH (some of them already used in the previous proof) in the following corollary.


\begin{corollary} \label{cor:label}
For a normal SCH we have
\begin{itemize}
\item[(1)]	
$\CH_k = \bigoplus_{\lev{\chi}\le k} L_\chi$
\item[(2)]
$\CD_k = \bigoplus_{\lev{\chi}= k} L_\chi$
\item[(3)]
$\CH_\infty = \bigoplus_{\chi \in \Lambda^0_S} L_\chi$
\item[(4)] 
$\alpha_i(\CH_\infty) \:\;= \bigoplus_{\chi:\; \chi_i=0} L_\chi = \{ x \in \CH_\infty: \alpha_i(x) = \alpha_{i+1}(x)\}$
\item[(5)] 
$\alpha_i(\CH_\infty)^\perp = \bigoplus_{\chi:\; \chi_i=1} L_\chi = \{ x \in \CH_\infty: \alpha_i(x) \perp \alpha_{i+1}(x)\}$
\item[(6)]
an operational characterization of labeled subspaces:
\begin{align*}
L_\chi  & = \bigcap_{i: \chi_i=0} \alpha_i(\CH_\infty) \cap 
\bigcap_{i: \chi_i=1} \alpha_i(\CH_\infty)^\perp \\
 & =  \left\{
 x \in \CH_\infty: \quad
\left.
\begin{array}{ll}
\alpha_i(x) = \alpha_{i+1}(x) & \mbox{for }  \chi_i =0 \\
\alpha_i(x) \perp \alpha_{i+1}(x) & \mbox{for }  \chi_i =1
\end{array}
\right.
\right\}
\end{align*}
\end{itemize}	
\end{corollary}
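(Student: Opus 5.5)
Items (1)--(3) were already obtained in the step $(e)\Rightarrow(a)$ of the proof of Theorem \ref{thm:label}. There we used that normality gives orthogonality of the $L_\chi$ by $(c)\Rightarrow(e)$, and that $\CH_k$ is the closed linear span of the $L_\chi$ with $\lev{\chi}\le k$. So I would simply recall this argument: orthogonality turns the closed span into an orthogonal direct sum. (2) then follows by taking $\CH_k\ominus\CH_{k-1}$, and (3) by passing to the closure of the union. The real work is in (4)--(6), and I would organise everything around the formula $\alpha_i L_\chi = L_{\epsilon_i(\chi)}$.

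For the first equality in (4), I would apply $\alpha_i$ to the decomposition (3). Since $\alpha_i$ is an isometry, $\alpha_i(\CH_\infty)=\bigoplus_\chi \alpha_i L_\chi=\bigoplus_\chi L_{\epsilon_i(\chi)}$. The map $\chi\mapsto\epsilon_i(\chi)$ is a bijection from all labels onto the labels with $\chi_i=0$: its inverse deletes position $i$, and the label $(0)$ is fixed, which is consistent with $\alpha_i L_0 = L_0$. This gives $\alpha_i(\CH_\infty)=\bigoplus_{\chi_i=0}L_\chi$. Taking orthogonal complements inside (3) gives the first equality of (5).

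For the operational descriptions I would check $\alpha_i$ and $\alpha_{i+1}$ label by label. Because both are isometries mapping $L_\chi$ onto $L_{\epsilon_i\chi}$ and $L_{\epsilon_{i+1}\chi}$, the key observation is this. If $\chi_i=0$, then $\epsilon_i\chi=\epsilon_{i+1}\chi$, since inserting a $0$ before or after an existing $0$ yields the same label. Moreover, on $L_\chi$ we actually have $\alpha_i=\alpha_{i+1}$. To see this, write $L_\chi=\alpha_i L_{\chi'}$ and use the cosimplicial identity $\alpha_{i+1}\alpha_i=\alpha_i\alpha_i$. If instead $\chi_i=1$, then $\epsilon_i\chi\ne\epsilon_{i+1}\chi$: they differ at positions $i,i+1$, reading $(0,1)$ versus $(1,0)$. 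Hence $\alpha_i L_\chi\perp\alpha_{i+1}L_\chi$ by orthogonality of distinct labeled subspaces.

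Now take a general $x=\bigoplus x_\chi$ and split it as $x=y+z$ with $y\in\bigoplus_{\chi_i=0}L_\chi$ and $z\in\bigoplus_{\chi_i=1}L_\chi$. From the label-by-label facts, $\alpha_i(x)-\alpha_{i+1}(x)=\alpha_i z-\alpha_{i+1}z$. The labels $\epsilon_i\chi$, $\epsilon_{i+1}\chi$ for $\chi_i=1$ are all distinct from one another, so this difference has squared norm $2\|z\|^2$, and equality holds iff $z=0$. Similarly, $\langle\alpha_i x,\alpha_{i+1}x\rangle=\langle \alpha_i y,\alpha_i y\rangle+(\text{cross terms})$. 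The cross terms vanish because $L_{\epsilon_i\chi}$ for $\chi_i=0$ (labels with $00$ at $i,i+1$) is orthogonal to the images of $z$ (labels with $01$ or $10$ there). Hence the inner product is $\|y\|^2$, and orthogonality holds iff $y=0$. This yields the second equalities in (4) and (5).

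Finally, (6) follows from (3), (4), (5): intersecting the sums $\bigoplus_{\psi_i=\chi_i}L_\psi$ over all $i$ leaves exactly $L_\chi$. Here one must take $i$ ranging over all of $\Nset_0$, with $\chi_i=0$ beyond $\lev{\chi}$, so that the label is pinned down completely. The second description in (6) is the conjunction of the pointwise conditions. The main care point is the bookkeeping of the cross terms in the inner-product computation, which requires that the labels $\epsilon_i\psi$ and $\epsilon_{i+1}\psi'$ coincide only when $\psi=\psi'$ with $\psi_i=0$. I would verify this directly from the definition of $\epsilon_i$.
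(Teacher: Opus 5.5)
Your proposal is correct and follows the paper's route: (1)--(3) are read off from the step $(e)\Rightarrow(a)$ of Theorem~\ref{thm:label}. The operational descriptions rest on the same dichotomy the paper uses, namely $\alpha_{i+1}\alpha_i=\alpha_i\alpha_i$ on $L_\chi$ when $\chi_i=0$, and orthogonality of $L_{\epsilon_i(\chi)}$ and $L_{\epsilon_{i+1}(\chi)}$ when $\chi_i=1$. Your splitting $x=y+z$, with $\|\alpha_i x-\alpha_{i+1}x\|^2=2\|z\|^2$ and $\langle \alpha_i x,\alpha_{i+1}x\rangle=\|y\|^2$, goes beyond the paper, which only checks the conditions on individual labeled subspaces: it supplies the reverse inclusions and handles the fact that $\{x:\alpha_i(x)\perp\alpha_{i+1}(x)\}$ is not a priori a linear subspace.
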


To obtain the second half of $(4),(5),(6)$
we can argue as follows. Let $x \in L_\chi$. If $\chi_i =0$ then $x = \alpha_i(y)$ for some $y \in \CH_\infty$, hence $\alpha_{i+1}(x) = \alpha_{i+1} \alpha_i(y) = \alpha_i \alpha_i(y) = \alpha_i(x)$.
(So this part even works without normality.) If $\chi_i =1$ then $\epsilon_i(\chi) \not= \epsilon_{i+1}(\chi)$. So $\alpha_i(x) \in L_{\epsilon_i(\chi)}$ and $\alpha_{i+1}(x) \in L_{\epsilon_{i+1}(\chi)}$ lie in different labeled subspaces and these are orthogonal, by normality, Theorem \ref{thm:label}(e).
\\
\begin{example}
As an easy and instructive example of an SCS which is saturated but not normal (hence not braided or symmetric) we can just remove the root element (the element that is not in the image of any $\delta_i$) from the rank $2$-graph $\Lambda_2$ (see Figure \ref{F:Lambda_S} in Section 5), to obtain an SCS 
whose initial part is sketched in the left part of Figure \ref{F:SCS_ex}.

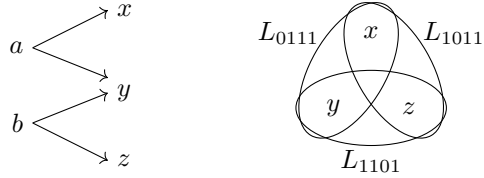
\begin{figure}[h]
  \begin{center}
    \begin{tikzpicture}
      \draw[->] (0,1.5) node[left] {$a$} -- (1,2) node[right] {$x$};
      \draw[->] (0,1.5) -- (1,1.1) ;
      \draw[->] (0,0.5) node[left] {$b$} -- (1,0.9) node[right] {$y$};
      \draw[->] (0,0.5) -- (1,0) node[right] {$z$};

      \draw[rotate around={60:(4.2,1.2)}] (4.2,1.2) ellipse (1cm and 0.5cm);
      \draw[rotate around={120:(4.8,1.2)}] (4.8,1.2) ellipse (1cm and 0.5cm);
      \draw[rotate around={0:(4.5,0.7)}] (4.5,0.7) ellipse (1cm and 0.5cm);
      \draw (4.5,1.7) node {$x$};
      \draw (4.0,0.7) node {$y$};
      \draw (5.0,0.7) node {$z$};
      \node at (3.4,1.7) {$L_{0111}$};
      \node at (5.6,1.7) {$L_{1011}$};
      \node at (4.5,0.0) {$L_{1101}$};
    \end{tikzpicture}
  \end{center}
  \caption{A non-normal, saturated SCS.}\label{F:SCS_ex}
\end{figure}
In this SCS now $a, b$ are root elements and the labels of all non-trivial labeled subspaces have rank $3$.  
We have $X_{-1}=X_0=X_1=\emptyset$, $X_2 = \{a,b\}$,
so $L_{111} = \spn\{a,b\}$.
We find further that $L_{0111} = \spn\{x,y\}, L_{1011} = \spn\{x,z\}, L_{1101} = \spn\{y,z\}$. The non-orthogonality of these spaces implies the non-normality of the SCH, by Theorem \ref{thm:label}(e). It is an instructive exercise to verify directly that the cohomology is still trivial, as it has to be by Corollary \ref{sat-c}.

We look at SCSs in more detail in Section 8. 
\end{example}

\section{Normal SCHs and tame representations of the infinite symmetric group $\mathbb{S}_\infty$}
Given a normal SCH we can consider its collection $\{L_{[n]}\}_{n \ge -1}$ of root spaces (of level $n$ and rank $n+1$). If for each $L_{[n]}$ we choose an ONB $\{e_{n,d}\}_{n \ge -1,d=1,\ldots,d(n)=dim L_{[n]}}$ then we obtain an ONB of $\CH_\infty$ by $\{\delta_f e_{n,d}\}_{f,n,d}$, where $f$ runs through all morphisms of $\Delta_S$ with domain $[n]$.
We call this an adapted ONB for the normal SCH. The following proposition is an immediate consequence of the existence of adapted ONBs. 
\begin{prop} \label{prop:complete}
Every normal SCH contains underlying SCSs given by adapted ONBs. The sequence $(d_n = dim L_{[n]})_{n \ge -1}$ is a complete invariant for normal SCHs with respect to unitary equivalence. In other words, every sequence $(d_n)_{n \ge -1}$ of cardinal numbers yields a normal SCH by choosing $dim L_{[n]} = d_n$ for all $n$, and normal SCHs are unitarily equivalent if and only if these sequences are the same. 
\end{prop}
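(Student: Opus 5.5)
The plan is to build everything from the orthogonal decomposition of Corollary \ref{cor:label}, $\CH_\infty = \bigoplus_{\chi} L_\chi$ with $L_\chi = \alpha_\chi L_{\underline{\chi}}$, and to observe that every structure map of the SCH is determined by what happens on the root spaces. There are three parts: adapted ONBs give an underlying SCS, the sequence $(d_n)$ is a unitary invariant, and every sequence is realized.

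\emph{Step 1 (adapted ONB).} Choose ONBs $\{e_{n,d}\}$ of the root spaces $L_{[n]}$. For a label $\chi$ with root $[n]$, the map $\alpha_\chi = \delta_f$ restricted to $L_{[n]}$ is an isometry onto $L_\chi$, so $\{\delta_f e_{n,d}\}_d$ is an ONB of $L_\chi$. Here $f$ is the unique morphism with domain $[n]$ whose image has label $\chi$, and these $f$ are in bijection with the labels of rank $n+1$. By orthogonality (Theorem \ref{thm:label}(e)) and Corollary \ref{cor:label}(1), the union over $\chi$ with $\lev{\chi}\le k$ is an ONB $X_k$ of $\CH_k$.

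Next check that $\delta_i$ maps $X_{k-1}$ into $X_k$. We have $\delta_i\delta_f e_{n,d} = \delta_{\e_i f} e_{n,d}$, and $\e_i f$ is again a morphism with domain $[n]$, with image label $\epsilon_i(\chi)$. Hence this vector is again an adapted basis vector. Injectivity follows because $\delta_i$ is an isometry. So $(X_k,\delta_i)$ is an SCS whose induced SCH is the given one.

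\emph{Step 2 (invariance).} A unitary equivalence $U$ intertwines the $\alpha_i$ and maps $\CH_k$ onto $\CH'_k$. By the definition of root spaces, which uses only $\CH_k$, $\CH_{k-1}$ and the $\delta_i$, it maps $L_{[n]}$ onto $L'_{[n]}$. Hence $d_n = d'_n$.

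Conversely, given equal dimensions, choose adapted ONBs on both sides and define $U\delta_f e_{n,d} := \delta'_f e'_{n,d}$. This bijects ONBs, so it extends to a unitary $\CH_\infty\to\CH'_\infty$. It maps $X_k$ onto $X'_k$, since the level of a basis vector is read off from its label. It intertwines the $\delta_i$ by the composition formula of Step 1.

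\emph{Step 3 (existence).} This is the main point needing care. Given cardinals $(d_n)_{n\ge -1}$, define the SCS
\[
X_k := \{(f,d) : f \in \Delta_S,\ s(f)=[n],\ r(f)=[m],\ m\le k,\ 1\le d\le d_n\}.
\]
Here $f$ is identified with its image $\im f\subset\NN_0$, and $m$ is the level of $\im f$. Set $\delta_i(f,d) := (\e_i f,d)$.

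The cosimplicial identities hold because they hold in $\Delta_S$. Each $\delta_i$ is injective because $\e_i$ is injective, so $\e_i f$ determines $f$. Note also that $\delta_{k}$ on $X_{k-1}$ is the inclusion, since $\e_k$ fixes an image set of level below $k$. Thus this is essentially $d_n$ copies of the rank-$(n+1)$ component $\Lambda_{n+1}$ of the graph of $\Delta_S$.

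Take the induced SCH with basis $X_k$. The labeled subspace $L_\chi$ should be the span of the $(f,d)$ with $\im f = \chi$. For the root level $k$, the vectors orthogonal to all $\delta_i\CH_{k-1}$ are exactly those supported on $f = \id_{[k]}$, because these are the only basis elements of level $k$ not in the image of some $\delta_i$. So $\dim L_{[k]} = d_k$.

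These spans are pairwise orthogonal, since they consist of disjoint sets of basis vectors. Theorem \ref{thm:label}((e)$\Rightarrow$(c)) then gives normality. The obstacle is identifying the labeled subspaces of this model correctly. The key fact is that a basis vector lies in $\delta_i\CH_{k-1}$ exactly when $i\notin\im f$.
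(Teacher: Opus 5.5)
Your proposal is correct and follows essentially the paper's route: the paper treats the proposition as an immediate consequence of adapted ONBs $\{\delta_f e_{n,d}\}$, and your Steps 1--3 spell out what it leaves implicit, namely the SCS structure, the basis-matching unitary, and the realization as $d_n$ copies of $\Lambda_{n+1}$ (the layers of Section~\ref{S:SCS_ext}). The only points to tidy are cosmetic: include the root $[-1]=\emptyset$ for $n=-1$, and index $d$ by a set of cardinality $d_n$ rather than writing $1\le d\le d_n$.
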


The theory of normal SCHs is closely related to the theory of tame representations of the infinite symmetric group $\mathbb{S}_\infty$. Recall that a unitary representation $\pi: \mathbb{S}_\infty \rightarrow \BH$ is called tame if one (and hence both) of the following equivalent conditions are satisfied: 
\begin{itemize}
\item[(1)] 
$\pi: \mathbb{S}_\infty \rightarrow \BH$ is continuous, 
\\
where $\mathbb{S}_\infty$ is considered as a topological group with a neighborhood basis of the identity given by the subgroups $\langle \sigma_k, k \ge n \rangle$ for $n \in \Nset$ (which is the topology of pointwise convergence for the standard action on $\Nset$) and $\BH$ is equipped with the strong operator topology sot (or the weak operator topology wot which restricted to unitaries is the same).
\item[(2)]
The union of the increasing tower of fixed point spaces $\CH^{\pi(\sigma_k), k \ge n}$ over $n \in \Nset$ is dense in $\CH$.
\end{itemize}

See \cite{Ol85,Ok99}. The terminology `tame' for these representations is taken from \cite{Ok99}, it is not yet used in \cite{Ol85}. Unitary tame representations of $\mathbb{S}_\infty$ are classified by Lieberman's theorem \cite{Li72}: they can be decomposed into sums of irreducibles and each irreducible can be constructed as an induced representation from an irreducible representation of the finite symmetric subgroup $S_n$, for some $n \in \Nset$. In \cite{Ol85} this is derived by a semigroup approach. In the following we show that an alternative 
proof of Lieberman's theorem can be obtained based on the observation that the underlying Hilbert space carries the structure of a normal SCH, giving another reason for their study.  

We have seen in Theorem \ref{thm:label} that a normal SCH is symmetric, so its tower $(\CH_n)$ can be interpreted as the tower $(\CH_n = \CH^{\pi(\sigma_k), k \ge n+2})$ for a unitary tame representation $\pi$ of $\mathbb{S}_\infty$. 
Conversely, given a unitary tame representation $\pi: \mathbb{S}_\infty \rightarrow \BH$, we obtain the canonical SCH by putting $\CH_\infty := \CH, \;\; \CH_n = \CH^{\pi(\sigma_k), k \ge n+2}, \; \alpha_i = \text{sot}-\lim \pi(\sigma_{i+1} \sigma_{i+2} \ldots)$. This is normal by Theorem \ref{thm:label}. It is an elementary and useful observation that, for all $n$, (the fixed point space) $\CH_n$ carries a representation of the finite group $S_{n+1}$, obtained by restriction. Let us further compute explicitly how the generators $\sigma_j$ interact with the partial shifts $\alpha_i$.

\begin{lemma}\label{lem:sigma}
Using the notation above we have
\[
\pi(\sigma_j) \alpha_i = 
\left\{
\begin{array}{llll}
\alpha_i \pi(\sigma_{j-1}) & \mbox{if }  i < j-1, \\
\alpha_j & \mbox{if }  i = j-1, \\
\alpha_{j-1} & \mbox{if }  i = j, \\
\alpha_i \pi(\sigma_j) & \mbox{if }  i > j. \\
\end{array}
\right. 
\]
\end{lemma}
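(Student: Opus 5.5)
We work directly with the Hessenberg factorization $\alpha_i = \text{sot-}\lim_{m\to\infty} \pi(\sigma_{i+1})\pi(\sigma_{i+2})\cdots\pi(\sigma_m)$. We use the braid relations (B1), (B2) together with the symmetric group relation $\pi(\sigma_j)^2 = \id$. Write $\sigma_j$ for $\pi(\sigma_j)$. Since left multiplication by the fixed unitary $\sigma_j$ is sot-continuous, it suffices to compute $\sigma_j \sigma_{i+1}\cdots\sigma_m$ for large finite $m$ and pass to the limit. For the right-multiplication cases we also use that multiplying on the right by a fixed bounded operator preserves sot-convergence. We treat the four cases separately.

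\emph{Case $i>j$.} Here $\sigma_j$ commutes with $\sigma_\ell$ for every $\ell \ge i+1 \ge j+2$, by (B2). Hence $\sigma_j\alpha_i = \alpha_i\sigma_j$ directly.

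\emph{Case $i=j$.} We have $\sigma_j\alpha_j = \sigma_j\sigma_{j+1}\sigma_{j+2}\cdots = \alpha_{j-1}$. This is the identity $\alpha_{n-1} = u_n\alpha_n$ already noted after the definition of Hessenberg factorizations.

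\emph{Case $i=j-1$.} We have $\sigma_j\alpha_{j-1} = \sigma_j\sigma_j\alpha_j = \alpha_j$, using $\sigma_j^2=\id$. This is the only place where we need $\Sset_\infty$ rather than $\Bset_\infty$.

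\emph{Case $i<j-1$.} This is exactly Lemma \ref{lem:braided}(1), with $u_m=\sigma_m$, shifted by one index. Lemma \ref{lem:braided}(1) gives $\sigma_{j+1}\alpha_i = \alpha_i\sigma_j$ for $i<j$. Replacing $j$ by $j-1$ yields $\sigma_j\alpha_i = \alpha_i\sigma_{j-1}$ whenever $i<j-1$. Its hypothesis, condition (3) of Lemma \ref{lem:braided}, holds everywhere because of (B1), as in the proof of Theorem \ref{thm:canon}.

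If one prefers a direct argument, write $\sigma_j\alpha_i = \sigma_{i+1}\cdots\sigma_{j-2}\,\sigma_j\sigma_{j-1}\sigma_j\,\sigma_{j+1}\cdots$, using that $\sigma_j$ commutes with $\sigma_{i+1},\dots,\sigma_{j-2}$. Replace $\sigma_j\sigma_{j-1}\sigma_j$ by $\sigma_{j-1}\sigma_j\sigma_{j-1}$ via (B1). Then move the trailing $\sigma_{j-1}$ to the right past $\sigma_{j+1},\sigma_{j+2},\dots$ by (B2). The only point needing care is this last step, since it moves a factor past an infinite product. Handle it at finite stage $m$, where $\sigma_{j-1}$ commutes with $\sigma_{j+1}\cdots\sigma_m$, and then take the sot-limit, noting that right multiplication by $\sigma_{j-1}$ preserves sot-convergence. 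I expect this limit bookkeeping to be the only mildly delicate point; everything else is immediate from relations already established.
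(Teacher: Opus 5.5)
Your proof is correct and matches the paper's argument case by case. For $i<j-1$ you apply Lemma \ref{lem:braided}(1) with the index shifted, and for $i=j-1$ you use $\alpha_{j-1}=\pi(\sigma_j)\alpha_j$ together with $\sigma_j^2=\id$. For $i=j$ you read the identity off the definition of $\alpha_i$, and for $i>j$ you commute $\sigma_j$ past every $\sigma_\ell$ with $\ell\ge i+1$. Your direct verification of the case $i<j-1$ and your explicit sot-limit bookkeeping are also correct.
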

\begin{proof}
The case $i<j-1$ is a restatement of Lemma \ref{lem:braided}(1). In the case $i=j-1$ we get $\pi(\sigma_j) \alpha_i =
\pi(\sigma^2_j) \alpha_{i+1}$ from the definition of $\alpha_i$ and the result follows because $\sigma^2_j$ is the identity in $\mathbb{S}_\infty$. The case $i=j$ is immediate from the definition of $\alpha_i$. Finally the case $i>j$ follows from the definition of $\alpha_i$ together with the fact that $\sigma_k \sigma_j = \sigma_j \sigma_k$ if $k>i$.
\end{proof}

From this we can determine how the representation interacts with the labeled subspaces. 

\begin{proposition} \label{prop:sigma}
Given a unitary tame representation $\pi$ of $\mathbb{S}_\infty$.
If $x \in L_\chi$ then $\pi(\sigma_j)x \in L_{\chi'}$,
where the label $\chi'$ is obtained from the label $\chi$ by the transposition $(j-1,j)$. In particular $\pi$ preserves the rank of a label.   
\end{proposition}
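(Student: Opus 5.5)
We want: if $x \in L_\chi$ then $\pi(\sigma_j)x \in L_{\chi'}$, with $\chi'$ obtained from $\chi$ by swapping positions $j-1$ and $j$. The plan is an induction on the level $\lev{\chi}$. Write $x = \alpha_\chi y$ with $y$ in the root space $L_{\underline{\chi}}$. Then peel off the cofaces $\alpha_i$ one at a time, commuting $\pi(\sigma_j)$ past each by Lemma \ref{lem:sigma}, until it reaches a root vector. For the root case I use the operational characterization in Corollary \ref{cor:label}(6); it applies because the canonical SCH of a tame representation is normal by Theorem \ref{thm:label}.

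\emph{Step 1 (non-root labels).} Suppose $\chi$ is not a root, so $\chi_i = 0$ for some $i$; then $L_\chi = \alpha_i L_{\chi^i}$, where $\chi^i$ is $\chi$ with the zero at position $i$ deleted. Choose such an $i$ and split into cases.
\begin{itemize}
\item[(i)] If $i < j-1$: Lemma \ref{lem:sigma} gives $\pi(\sigma_j)\alpha_i = \alpha_i \pi(\sigma_{j-1})$. By induction $\pi(\sigma_{j-1})$ maps $L_{\chi^i}$ into the label with positions $j-2,j-1$ swapped. Applying $\alpha_i$ (i.e.\ $\epsilon_i$) then yields exactly $\chi'$.
\item[(ii)] If $i > j$: this is analogous, using $\pi(\sigma_j)\alpha_i = \alpha_i\pi(\sigma_j)$.
\item[(iii)] If $i \in \{j-1, j\}$: the identities $\pi(\sigma_j)\alpha_{j-1} = \alpha_j$ and $\pi(\sigma_j)\alpha_j = \alpha_{j-1}$ give $\pi(\sigma_j)x \in \alpha_j L_{\chi^{j-1}}$ (resp.\ $\alpha_{j-1}L_{\chi^j}$). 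This equals $L_{\epsilon_j(\chi^{j-1})}$, and one checks $\epsilon_j(\chi^{j-1}) = \chi'$ as labels. Here we use $\alpha_i L_\psi = L_{\epsilon_i(\psi)}$.
\end{itemize}
If $\chi_{j-1} = \chi_j$, the transposition does nothing to the label. In that case a zero at $i \notin \{j-1,j\}$, or a zero at $j-1$ or $j$, can still be used; the cases above cover it. Care is needed with positions beyond $\lev{\chi}$, where $\chi$ is zero-padded: if $j-1 > \lev{\chi}$ then $x$ is fixed by $\pi(\sigma_j)$, consistent with $\chi' = \chi$.

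\emph{Step 2 (root labels).} Let $\chi = \underline{\chi} = [n]$ with $x \in L_{[n]}$. If $j \le n$, then $\chi' = \chi$, and we must show $\pi(\sigma_j)L_{[n]} \subset L_{[n]}$. First, $\pi(\sigma_j)$ preserves $\CH_n$ since $j < n+2$ commutes appropriately; in fact $\CH_n$ carries the $S_{n+1}$-representation. Next, the orthogonal complement of $L_{[n]}$ in $\CH_n$ is $\bigoplus_i \alpha_i(\CH_{n-1})$, which is invariant under $\pi(\sigma_j)$ by Step 1 applied to lower labels, or directly from Lemma \ref{lem:sigma}. Since $\pi(\sigma_j)$ is unitary on $\CH_n$, it then preserves $L_{[n]}$.

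If $j = n+1$, then $\chi'$ is $1\cdots101$ with the $0$ at position $n$. Here I would use Corollary \ref{cor:label}(6): verify $\alpha_n(\pi(\sigma_{n+1})x) = \alpha_{n+1}(\pi(\sigma_{n+1})x)$ and the orthogonality conditions at the other positions. A more direct route: since $x \in \CH_n$ is fixed by $\pi(\sigma_{n+2}),\dots$, we have $\pi(\sigma_{n+1})x = \pi(\sigma_{n+1})\alpha_{n+1}x = \alpha_n x \in L_{\epsilon_n([n])} = L_{\chi'}$. For $j > n+1$, $x$ is fixed and $\chi' = \chi$.

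The main obstacle I expect is bookkeeping: checking carefully that the label identities in case (iii), and the convention of padding labels with zeros beyond their level, match the transposition $(j-1,j)$ exactly. Rank preservation is then immediate, since a transposition does not change the number of $1$'s.
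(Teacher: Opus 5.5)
Your proof is correct, but it is organized differently from the paper's. The paper does no induction. It starts from the operational description in Corollary \ref{cor:label}(6), $L_\chi=\bigcap_{\chi_i=0}\alpha_i(\CH_\infty)\cap\bigcap_{\chi_i=1}\alpha_i(\CH_\infty)^\perp$, and reads off from Lemma \ref{lem:sigma} that $\pi(\sigma_j)$ maps the range $\alpha_i(\CH_\infty)$ onto $\alpha_{\tau(i)}(\CH_\infty)$, where $\tau$ is the transposition $(j-1,j)$. (For $i<j-1$ and $i>j$ one uses that $\pi(\sigma_{j-1})$ resp.\ $\pi(\sigma_j)$ is onto.) Being unitary, $\pi(\sigma_j)$ also carries the orthogonal complements of these ranges onto each other. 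Hence $\pi(\sigma_j)L_\chi=L_{\chi'}$ follows at once, with equality and with no separate treatment of roots or of the zeros beyond $\lev{\chi}$. The price is that Corollary \ref{cor:label}(6) depends on normality, i.e.\ on the orthogonality of labeled subspaces from Theorem \ref{thm:label}.

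Your argument instead pushes $\pi(\sigma_j)$ through the factorization $L_\chi=\alpha_iL_{\chi^i}$, using the identity $\alpha_iL_\psi=L_{\epsilon_i(\psi)}$, which holds in every SCH. Suppose in Step 2 you take the option ``directly from Lemma \ref{lem:sigma}'' together with the direct computation $\pi(\sigma_{n+1})x=\pi(\sigma_{n+1})\alpha_{n+1}x=\alpha_nx$. Then you never use orthogonality of labeled subspaces at all, only the relations of Lemma \ref{lem:sigma} and the definition of root spaces. The cost is the label bookkeeping, which you carry out correctly (e.g.\ $\epsilon_j(\chi^{j-1})=\chi'=\epsilon_{j-1}(\chi^j)$).

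A few small repairs are needed.
\begin{itemize}
\item In Step 2, the orthogonal complement of $L_{[n]}$ in $\CH_n$ is the closed linear span of the ranges $\alpha_i(\CH_{n-1})$, $0\le i\le n$, not an orthogonal direct sum. For instance, for $n\ge1$ both $\alpha_0(\CH_{n-1})$ and $\alpha_1(\CH_{n-1})$ contain $\alpha_1\alpha_0(\CH_{n-2})$.
\item Invariance of a closed subspace $M$ under a unitary $U$ gives invariance of $M^\perp$ only if $UM=M$. Here this holds, either because $\pi(\sigma_j)$ is an involution, or because for $j\le n$ Lemma \ref{lem:sigma} shows that $\pi(\sigma_j)$ permutes the ranges $\alpha_i(\CH_{n-1})$, $0\le i\le n$.
\item In Step 1, the zero you remove should sit at a position $i<\lev{\chi}$, so that the level genuinely drops.
\end{itemize}
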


\begin{proof}
Recall from Corollary \ref{cor:label} that 
$x \in L_\chi$ 
if and only if $x$ is in the range of the partial shift $\alpha_i$ whenever $\chi_i=0$ and orthogonal to the range of $\alpha_i$ whenever $\chi_i=1$. Now an inspection of Lemma \ref{lem:sigma} shows that 
$\pi(\sigma_j)x \in L_{\chi'}$ (take into account that the $\pi(\sigma_j)$ are unitary). Hence $\pi(\sigma_j)$
preserves the rank of a label and because the $\sigma_j$
generate $\mathbb{S}_\infty$ this is true for all the unitaries in the representation $\pi$.
\end{proof}

If $\CK$ is any $\mathbb{S}_\infty$-invariant subspace for a tame unitary representation on $\CH$ then the subrepresentation is tame again (this is obvious from the characterization by continuity) and thus this is again a normal SCH. We see from Corollary \ref{cor:label}(6) that labels do not change if we go to a subrepresentation. Now we see from Proposition \ref{prop:sigma} that, if necessary, we can decompose the representation further and assume from now on that only vectors of a fixed rank $m+1$ appear in $\CK$. Then there is a uniquely determined root space $\CK_m = \CH_m \cap \CK$, cyclic for the representation. Note that $\CK_m$ is an $S_{m+1}$-invariant subspace of $\CH_m$. 
\\
Conversely, any 
$S_{m+1}$-invariant subspace of $\CH_m$ arises in this way. Indeed, starting from such a subspace of root vectors of rank $m+1$ we can form the corresponding normal SCH and, as can be checked by Lemma \ref{lem:sigma}, this produces an $\mathbb{S}_\infty$-invariant subspace $\CK$ of $\CH$. It is characterized among other $\mathbb{S}_\infty$-invariant subspaces, as a normal SCH, by its root vectors. 
The conclusion is that there is a bijective correspondence between $S_{m+1}$-invariant subspaces of $\CH_m$ and $\mathbb{S}_\infty$-invariant subspaces of $\CH$, by extension respectively by restriction. But
$S_{m+1}$ is a finite group and any representation can be decomposed into irreducible representations. Hence the same is true for tame unitary representations of $\mathbb{S}_\infty$.

All that remains to check for a full proof of Lieberman's theorem is that a unitary tame representation $\pi$ of $\mathbb{S}_\infty$ is an induced representation. We can again assume that only vectors of a fixed rank $m+1$ appear, so we have a uniquely determined root space $\CH_m$. Because $\CH_m = \CH^{\pi(\sigma_k), k \ge m+2}$ this space carries,
by restriction of $\pi$, a representation of $\langle \sigma_k: k \in \Nset \setminus \{m+1\} \rangle$ where
$\langle \sigma_k: k \ge m+2 \rangle$ acts identically. We claim that $\pi$ is induced by that. By 1.F.1 in \cite{BdlH20} it is enough to check that for a left transversal $T$ of (the open subgroup) $\langle \sigma_k: k \in \Nset \setminus \{m+1\} \rangle$ in $\mathbb{S}_\infty$ the spaces $\pi(t) \CH_m$ for $t \in T$ are all orthogonal to each other. For $m < n$ we can consider 
$S_{n+1} = \langle \sigma_k: 1 \le k \le n \rangle$ as permuting $[n]$, so we can identify a left coset of
$\langle \sigma_k: k \in [n] \setminus \{m+1\} \rangle$ with a subset of size $m+1$ in $[n]$, namely the image of
$[m]$ under any permutation in the coset. Letting $n$ vary we can identify any $t \in T$ with a subset $A_\chi$ of size $m+1$ of $\Nset_0$ (a label $\chi$ of rank $m+1$). Explicitly, for $A_\chi \subset [n]$ (that is, labels $\chi$ with level not exceeding $n$), we can choose 
\[
t=\sigma_\chi :=
\sigma_{i_k}^{[n]}\sigma_{i_{k-1}}^{[n-1]}\cdots \sigma_{i_1}^{[m+1]}, \text{ with } 0 \le i_1< i_2 <\cdots < i_k\le n,
\]
where $A_\chi = [n]\setminus \{i_1,i_2,\ldots,i_k\}$, $k:=n-m$, and $\sigma_p^{[l]} := \sigma_{p+1} \ldots \sigma_l \text{ for all } 0\le p \le l $ (with $\sigma_l^{[l]}$ to be interpreted as identity). 
We can now recover the labeled subspaces by $L_{\underline{\chi}} = \CH_m$ and
\[
L_\chi = \alpha_\chi L_{\underline{\chi}} = \pi(\sigma_\chi) \CH_m.
\]
But the orthogonality of labeled subspaces follows from Theorem \ref{thm:label}. This concludes our proof of Lieberman's theorem. We see that the labeled subspaces form a kind of system of imprimitivity associated to an induced representation of the (not locally compact) group $\mathbb{S}_\infty$ (see for example \cite{Ma78} for the classical version of systems of imprimitivity for locally compact groups). 

Reflecting on these arguments it is possible to come to the conclusion that the theory of normal SCHs is just another way of looking at the theory of tame representations of $\mathbb{S}_\infty$. And in fact it is a relatively elementary way of approaching these. But it is important to remind us that normal SCHs are also produced as canonical SCHs by representations of the infinite braid group $\mathbb{B}_\infty$ which have an analogous continuity property. A lot less is known about these representations and the situation is in fact much more complicated (compare Lemma \ref{lem:sigma} from which only parts are still valid for $\mathbb{B}_\infty$). Still we may hope that the toolkit of SCHs can lead to some progress here as well. We don't go further into it in this paper, these questions are currently under investigation. 

\section{Classification of SCSs and Normal Extensions}\label{S:SCS_ext}
Let $(Y_k,\delta_i)_{k,i}$ be an SCS. Recall that by thinking of each $Y_k$ as an orthonormal basis of $\CH_k$ of an SCH we can think of an SCS as a special kind of SCH where the $\delta_i$ map basis vectors (injectively) to basis vectors and terminology and results about SCHs apply to SCSs. The partial shifts $\alpha_i$ provide injective maps on $Y_\infty = \bigcup^\infty_{k=-1} Y_k$ which we still call $\alpha_i$.
By elements and subsets of the SCS we mean elements and subsets of $Y_\infty$.

In particular for an SCS we have (disjoint) innovation subsets and labeled subsets. `Orthogonal' for subspaces becomes `disjoint' if we talk about subsets. We notice from the definitions that every $y \in Y_\infty$ belongs to at least one labeled subset and in fact, by Theorem \ref{thm:label}(e), we have a partition of $Y_\infty$ into disjoint labeled subsets (refining the partition into innovation subsets) if and only if the SCS is normal. Hence every $y \in Y_\infty$ in a normal SCS has a unique label. We can use Corollary \ref{cor:label}(6) to motivate the following definition and use this as a clue how to construct normal extensions of an SCS. This also provides a way to classify SCSs in general. 

\begin{definition}
The normal label $\hat{\chi}(y) \in \Lambda_S^0$ for any $y \in Y_\infty$ (in any SCS $(Y_k,\delta_i)_{k,i}$) is defined for $n \in \Nset_0$ by
\[
\hat{\chi}(y)_n := 
\left\{
\begin{array}{ll}
0 & \mbox{if } \alpha_n(y) = \alpha_{n+1}(y) \\
1 & \mbox{if } 
\alpha_n(y) \not= \alpha_{n+1}(y)
\end{array}
\right.
\]
\end{definition}
We see from Corollary \ref{cor:label}(6) that if the SCS $(Y_k,\delta_i)_{k,i}$ is normal then the normal label $\hat{\chi}(y)$ is exactly the label of the uniquely labeled subset to which $y$ belongs. On the other hand, because the actions of the partial shifts on $y$ do not change if we go to an extension, it follows that even in a non-normal SCS $\hat{\chi}(y)$ is the correct label for $y$ and its labeled subset in any normal extension (if it exists). This motivates the terminology `normal label'.
Note that $\hat{\chi}(y)_n=1$
only for finitely many $n$ because $\alpha_m(y)=y$  for all $m$ large enough and hence indeed $\hat{\chi}(y) \in \Lambda_S^0$. We can also see from the normal labels if an SCS is saturated: this is the case if and only if for all $y \in Y_\infty$ we have $y \in Y_{\lev{\hat{\chi}(y)}}$. In fact, if we produce the saturation of an SCS then the set $Y_\infty$ does not change, we only move each $y \in Y_\infty$ into $Y_{\lev{\hat{\chi}(y)}}$ (if not already there) and hence achieve
$Y_{\lev{\hat{\chi}(y)}} = Y^{\alpha_{\lev{\hat{\chi}(y)}+1}}_\infty$.

\begin{lemma} \label{lem:normlab}
$\hat{\chi}\big(\alpha_j(y)\big) = \epsilon_j(\hat{\chi}(y))$ (for all $y \in Y_\infty$ and $j \ge 0$).
\end{lemma}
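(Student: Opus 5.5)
The plan is to compare the two labels coordinate by coordinate. By definition, $\hat{\chi}(\alpha_j(y))_n$ is $0$ exactly when $\alpha_n\alpha_j(y)=\alpha_{n+1}\alpha_j(y)$. The aim is to rewrite both sides with the cosimplicial identities so that $\alpha_j$ is applied last. Injectivity of $\alpha_j$ then reduces the question to an identity between $\alpha$'s applied to $y$ itself. On the other side we use
\[
\epsilon_j(\chi)_n=\chi_n \text{ for } n<j,\quad \epsilon_j(\chi)_j=0,\quad \epsilon_j(\chi)_n=\chi_{n-1} \text{ for } n>j .
\]
Everything takes place in $Y_\infty$, where the partial shifts are injective maps satisfying $\alpha_q\alpha_p=\alpha_p\alpha_{q-1}$ for $p<q$.

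\emph{Case $n<j$.} Then $n<n+1\le j$, so $\alpha_j\alpha_n=\alpha_n\alpha_{j-1}$ with $n<j$. For the other index we use $\alpha_{j}\alpha_{n+1}=\alpha_{n+1}\alpha_{j-1}$ if $n+1<j$; if $n+1=j$ we use instead $\alpha_{j}\alpha_j$ as it stands. It is cleaner to move $\alpha_j$ to the left: for $p<q$ we have $\alpha_p\alpha_q=\ ?$, which is not directly available. So we rewrite in the other direction, taking $q=j+1$: from $\alpha_{j+1}\alpha_n=\alpha_n\alpha_j$ (valid since $n<j+1$) we get $\alpha_n\alpha_j(y)=\alpha_{j+1}\alpha_n(y)$. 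Likewise, $n+1<j+1$ gives $\alpha_{n+1}\alpha_j(y)=\alpha_{j+1}\alpha_{n+1}(y)$. By injectivity of $\alpha_{j+1}$, equality holds iff $\alpha_n(y)=\alpha_{n+1}(y)$. Hence the $n$-th coordinate is $\hat{\chi}(y)_n$.

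\emph{Case $n=j$.} Here $\alpha_{j+1}\alpha_j=\alpha_j\alpha_j$ directly by the cosimplicial identity, so the coordinate is $0$.

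\emph{Case $n>j$.} Using $\alpha_n\alpha_j=\alpha_j\alpha_{n-1}$ and $\alpha_{n+1}\alpha_j=\alpha_j\alpha_n$ (both valid since $j<n<n+1$), together with injectivity of $\alpha_j$, equality holds iff $\alpha_{n-1}(y)=\alpha_n(y)$. So the coordinate is $\hat{\chi}(y)_{n-1}$.

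These three cases match the definition of $\epsilon_j$ exactly. The only point needing care is choosing the correct direction of the cosimplicial identity in the case $n<j$, where $\alpha_{j+1}$ serves as the common left factor. Finiteness of the label is automatic, as noted before the lemma.
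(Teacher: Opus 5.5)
Your proof is correct and is essentially the paper's argument. It uses the same three-way case split on the coordinate index: when $n<j$, $\alpha_{j+1}$ is the common left factor and its injectivity finishes the case; when $n=j$, the identity $\alpha_{j+1}\alpha_j=\alpha_j\alpha_j$ gives $0$; and when $n>j$, $\alpha_j$ is the common left factor and its injectivity finishes the case. The only thing to tidy is the abandoned false start in the case $n<j$ (the sentences before you switch to $q=j+1$), which should simply be deleted.
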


\begin{proof}
Let $i,j \ge 0$. 

1) $\quad  \alpha_j\big(\alpha_j(y)\big) = \alpha_{j+1}\big(\alpha_j(y)\big)$,
hence $\hat{\chi}\big(\alpha_j(y)\big)_j = 0 = \epsilon_j(\hat{\chi}(y))_j$. 

2) If $i<j$ then
\begin{align*}
\alpha_i\big(\alpha_j(y)\big) &= \alpha_{j+1}\big(\alpha_i(y)\big) \\
\alpha_{i+1}\big(\alpha_j(y)\big) &= \alpha_{j+1}\big(\alpha_{i+1}(y)\big),
\end{align*}
hence (by injectivity of $\alpha_{j+1}$)
\[
\hat{\chi}\big(\alpha_j(y)\big)_i = \hat{\chi}(y)_i = \epsilon_j(\hat{\chi}(y))_i.
\]

3) If $i>j$ then
\begin{align*}
\alpha_i\big(\alpha_j(y)\big) &= \alpha_j\big(\alpha_{i-1}(y)\big) \\
\alpha_{i+1}\big(\alpha_j(y)\big) &= \alpha_j\big(\alpha_i(y)\big),
\end{align*}
hence (by injectivity of $\alpha_j$)
\[
\hat{\chi}\big(\alpha_j(y)\big)_i = \hat{\chi}(y)_{i-1} = \epsilon_j(\hat{\chi}(y))_i.
\]
\end{proof}

Combining the previous discussion with this lemma it is now possible to establish that each SCS has a normal extension and in fact to give a structure theorem for SCSs which classifies them completely, up to isomorphism. Clearly it is enough to consider only saturated SCSs (the general case obtained by `moving into wrong boxes', see Example \ref{E:basic_examples}) which allows us to focus on the elements and the way the partial shifts act on them (the tower determined by this as fixed point sets). A root element is an element not in the image of any coface map $\delta_i$. Root elements exist in every nonempty SCS, for example the lowest index non-empty innovation set consists of root elements. Every element can be written as $\delta_g(\underline{y})$ for a root element $\underline{y}$ and a morphism $g \in \Delta_S$.
After these elementary preparations we now put the results together in the following theorem.

\begin{theorem}
In the following all SCSs are considered to be saturated. 
\begin{itemize}
\item[(a)]
Let $\Sset$ be an SCS with a single root element $\underline{y}$ of normal rank $\rank{\hat{\chi}(\underline{y})}=k$. Then $\Sset$ is isomorphic (as an SCS) to the sub-SCS $\hat{\chi}(\underline{y})^{\le} \subset \Lambda_k$ (i.e., all labels in $\Lambda_k$ which dominate $\hat{\chi}(\underline{y})$, see Section 5). Two of them, $\Sset$ resp. $\Sset'$ with single root elements $\underline{y}$ resp. $\underline{y}'$ of normal rank $k$, are isomorphic as SCSs if and only if these root elements have the same normal label: $\hat{\chi}(\underline{y}) = \hat{\chi}(\underline{y}')$.
$\Sset$ is normal if and only if $\hat{\chi}(\underline{y}) = (1, \ldots, 1, 0, 0, \ldots)$, i.e., if the normal label of the root element is the root of $\Lambda_k$. In other words, $\Sset$ is normal if and only if it is isomorphic to $\Lambda_k$ (as an SCS). 
As a poset all these SCSs are isomorphic to $\Lambda_k$. 

\item[(b)]
A normal SCS $\Sset$ is a disjoint union of normal SCSs with single root elements
(as classified in (a), we call each of them a layer of $\Sset$).
A complete invariant for normal SCSs, up to isomorphism, is given by the multiplicities 
for layers of the form $\Lambda_k$
within $\Sset$.
\item[(c)]
  Any SCS has an extension to a normal SCS (in other words: is a sub-SCS of a normal SCS).  There is a minimal normal extension which is unique up to isomorphism.

\item[(d)]
A complete invariant for SCSs, up to isomorphism, is given by the invariant described in (b) applied to the minimal normal extension together with, for each layer in the minimal normal extension, a nonempty finite set of labels with none of them dominating any of the others. These labels are the normal labels of the root elements of the SCS. 
\end{itemize}
\end{theorem}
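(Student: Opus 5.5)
The whole argument will run through the normal label map $\hat{\chi}\colon Y_\infty \to \Lambda^0_S$ and its equivariance, Lemma \ref{lem:normlab}: $\hat{\chi}(\alpha_j y)=\epsilon_j(\hat{\chi}(y))$. Since we work with saturated SCSs, the tower is the fixed-point tower, so $y\in Y_k$ if and only if $\lev{\hat{\chi}(y)}\le k$. Consequently an SCS map which intertwines the partial shifts and preserves normal labels automatically respects the tower.

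For (a), every element has the form $\delta_g(\underline{y})=\alpha_{j_t}\cdots\alpha_{j_1}(\underline{y})$. I would define $\Phi(y):=\hat{\chi}(y)$. By Lemma \ref{lem:normlab}, $\Phi$ intertwines $\alpha_j$ with $\epsilon_j$, so its image is the set $\hat{\chi}(\underline{y})^{\le}$ of labels dominating $\hat{\chi}(\underline{y})$. The main point is injectivity. Suppose two words in the $\alpha_j$ applied to $\underline{y}$ give the same label. Using Lemma \ref{lem:canon}, I would rewrite each word in canonical form, in which only positions $j_s$ carrying a $1$ in the current label are used. This rewriting is legitimate because inserting a $0$ at position $i$ when label bit $i$ is already $0$ equals inserting at $i+1$, and on elements $\alpha_i(y)=\alpha_{i+1}(y)$ exactly when $\hat{\chi}(y)_i=0$. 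The canonical word is uniquely determined by the pair (source label, target label), so the two elements coincide. Hence $\Phi$ is a bijection onto $\hat{\chi}(\underline{y})^{\le}$ intertwining the coface maps, and it is level-preserving by saturation. The remaining claims of (a) follow from this. The isomorphism invariant is the root label, since an isomorphism preserves $\hat{\chi}$. For normality, Theorem \ref{thm:label}(e) shows that labeled subsets are disjoint exactly when the unique root label is the root $[k-1]$; otherwise the root label coincides with the labeled subset of some non-root label, as in the example of Figure \ref{F:SCS_ex}. Finally, as a poset $\chi^{\le}$ is order-isomorphic to $\Lambda_k$ via $\mho_k$, by translation by the source.

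For (b), take a normal SCS. Its labeled subsets partition $Y_\infty$ and $\hat{\chi}$ is the label, so I would decompose $Y_\infty$ according to root elements. Each $y$ lies in $L_\chi=\alpha_\chi L_{\underline{\chi}}$ and hence comes from a unique root element, because $\alpha_\chi$ is injective on the root set. This splits $Y_\infty$ into disjoint sub-SCSs, one per root element, each normal and hence isomorphic to $\Lambda_k$ by (a). The multiplicities, that is, the cardinalities of the root sets at each rank, form the complete invariant, exactly as in Proposition \ref{prop:complete}.

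For (c) and (d), given a saturated SCS with root elements $\underline{y}$, each generates a single-root sub-SCS isomorphic to $\hat{\chi}(\underline{y})^{\le}\subset\Lambda_k$. These sub-SCSs may overlap, and this is the main obstacle. I would declare two root elements equivalent when the sub-SCSs they generate intersect, and pass to the equivalence relation this generates. The key claim is that within one class all the label maps glue to an injective equivariant map into a single copy of $\Lambda_k$. The plan is to show that if $\delta_g(\underline{y})=\delta_{g'}(\underline{y}')$, then the two embeddings agree on the overlap. This holds because $\Phi=\hat{\chi}$ is defined intrinsically on elements, so the same formula gives the embedding for every root. Injectivity across different roots in one class is the delicate point. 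I would try to prove it by showing that the class is the union of the up-sets $\chi_r^{\le}$ and that $\hat{\chi}$ is injective on it, using cancellation of injective $\alpha_j$'s to bring two elements with equal label down to a common ancestor. Once this is established, each class embeds in its own copy of $\Lambda_k$, and the disjoint union of these copies is a normal extension. It is minimal and unique because every normal extension must contain, for each class, a $\Lambda_k$ layer receiving it via $\hat{\chi}$. The invariant in (d) is then the layer multiplicities together with, for each layer, the set of root labels. This set is finite because the union of up-sets is generated by finitely many minimal elements (Dickson's lemma in $\NN_0^k$), and it is an antichain because root elements lie in no image of any $\delta_i$.
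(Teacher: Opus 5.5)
Your treatment of (a) and (b) matches the paper's: the canonical form via Lemma \ref{lem:canon}, justified because $\alpha_i(y)=\alpha_{i+1}(y)$ exactly when $\hat{\chi}(y)_i=0$, and the splitting of a normal SCS into layers by root elements. Part (c), however, has a genuine gap at exactly the point you flag. You never prove that $\hat{\chi}$ is injective on an equivalence class, and the strategy you sketch points the wrong way. Two distinct root elements $r_1\neq r_2$ in the same class have no common ancestor at all, and ruling out $\hat{\chi}(r_1)=\hat{\chi}(r_2)$ is precisely what has to be shown. Cancelling injective partial shifts only helps once you already have an equality \emph{above} the two elements. Because you pass to the equivalence relation \emph{generated} by overlapping up-sets, you would also have to carry injectivity along chains, which you do not address. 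The gap propagates to (d). To see that the root labels in a layer form an antichain, note that $\hat{\chi}(r_1)\le\hat{\chi}(r_2)$ gives an element $\delta_f(r_1)$ carrying the label of $r_2$. Only class-injectivity then lets you conclude $\delta_f(r_1)=r_2$, which contradicts $r_2$ being a root.

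The missing idea is to go up rather than down. Use the fact that any two labels of the same rank have a common upper bound in $\Lambda_k$ (the coordinatewise maximum in $\NN_0^k$). Combine it with what you proved in (a): inside a single up-set $x^{\le}$, an element is determined by its normal label. Concretely, let $y_1\in r_1^{\le}$ and $y_2\in r_2^{\le}$ with $\hat{\chi}(y_1)=\hat{\chi}(y_2)$, and let $z\in r_1^{\le}\cap r_2^{\le}$. Choose $\psi\ge\hat{\chi}(y_2),\hat{\chi}(z)$, and words $h,h'$ in the partial shifts with $\epsilon_h(\hat{\chi}(y_2))=\psi=\epsilon_{h'}(\hat{\chi}(z))$.
\begin{itemize}
\item By (a) applied in $r_2^{\le}$, $\delta_h(y_2)=\delta_{h'}(z)$, and this element lies in $r_1^{\le}$.
\item $\delta_h(y_1)\in r_1^{\le}$ also has label $\psi$, so by (a) applied in $r_1^{\le}$, $\delta_h(y_1)=\delta_h(y_2)$.
\item Injectivity of the partial shifts then gives $y_1=y_2$.
\end{itemize}
The same trick shows that $x\sim y:\iff x^{\le}\cap y^{\le}\neq\emptyset$ is already transitive, so no generated relation is needed. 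This is exactly the paper's argument.

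A minor point: your justification of the normality criterion in (a) is garbled. A cleaner route is Corollary \ref{cor:label}(6). In a normal SCS it forces the normal label of the root element $\underline{y}\in D_m$ to be the root $[m]$. Conversely, in $\Lambda_k$ the nonempty labeled subsets are the singletons $\{\delta_f(\underline{y})\}$, and these are pairwise distinct by injectivity of $\Phi$.
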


\begin{proof}
(a)
We want to show that we can identify an element $y$ (of the SCS $\Sset$ with a single root element $\underline{y}$) with its normal label, that is, any two elements with the same normal label are the same. We write $y=\delta_g(\underline{y})$ for a morphism $g \in \Delta_S$. By Lemma \ref{lem:normlab}
we see that $g = \e_{j^\prime_t} \ldots \e_{j^\prime_1}$ maps the normal label of $\underline{y}$ to the normal label of $y$. We now successively replace (if necessary) the $j^\prime_s$ by the smallest $j_s \ge j^\prime_s$ so that $f = \e_{j_t} \ldots \e_{j_1}$ satisfies the conditions of Lemma \ref{lem:canon}.
Hence in the end we have replaced $g$ by the unique canonical morphism $f$ of Lemma \ref{lem:canon}.
This doesn't change the action on the labels. 
In fact we see from the definition of the normal labels that also $\alpha_{j^\prime_s}$ acts in the same way as $\alpha_{j_s}$ on the element obtained at this point. 
(Similarly, note that the replacement of $j^\prime_s$ by $j_s$ above is not possible only in the case where both
$\e_{j^\prime_s}$ and $\alpha_{j^\prime_s}$ act identically on the label respectively the element obtained at this point, so these can be safely removed.)
Hence $y=\delta_g(\underline{y}) = \delta_f(\underline{y})$, so $y$ is uniquely determined by its normal label. 

Given $\hat{\chi}(\underline{y})$ as the normal label of a single root element $\underline{y}$, 
Lemma \ref{lem:normlab} shows that the partial shifts of $\Sset$ act on the normal labels in exactly the same way as the corresponding morphisms do in $\Lambda_k$, so we can identify $\Sset$ with 
$\hat{\chi}(\underline{y})^{\le}$ (as an SCS) by identifying elements of $\Sset$ with their normal labels. 
If $\hat{\chi}(\underline{y}')$ is different it can be seen from the definition of normal labels that now some of the actions of partial shifts are different and what we obtain is not isomorphic (as an SCS). The actions in the normal case are only obtained if $\hat{\chi}(\underline{y})$ is also the root in $\Lambda_k$. As a poset all these are isomorphic because here, without changing the poset, we can remove all $0's$ to the left of $1's$ in $\hat{\chi}(\underline{y})$, arriving at the root of $\Lambda_k$.
\\
(b)
If two different elements $y$ and $y'$ of a normal SCS $\Sset$ have the same label then, by injectivity of the partial shifts, the corresponding roots $\underline{y}$ and $\underline{y}'$ must also be different. So we obtain different layers. In fact we are just repeating here the argument in the beginning of Section 7 when we introduced adapted bases and the multiplicities here are the dimensions of the root spaces in Section 7. Note that for a normal SCH the adapted bases are underlying normal SCSs and any normal SCS can be thought of in that way.
\\
(c)
If $x$ is an element of an SCS then we denote by $x^{\le}$ the set of all $\delta_f(x)$, it is in one-to-one correspondence with the labels dominating $\hat{\chi}(x)$, using the argument shown for (a). Note that two labels of the same rank always have a common upper bound, hence by defining, for $x,y$ in the SCS, that
$x \sim y$ if and only if $x^{\le} \cap y^{\le} \not= \emptyset$ we obtain an equivalence relation. Suppose $x,y$ are in the same equivalence class and $y$ has the same normal label as an element
$x^\prime \in x^{\le}$. Then for some morphism $f$ also $\delta_f(y) \in x^{\le}$ and the latter element has the same normal label as $\delta_f(x^\prime)$. Therefore $\delta_f(y) = \delta_f(x^\prime)$ and, by injectivity, $y=x^\prime$. We conclude that if $x,y$ are in the same equivalence class and $x \not= y$
then also $\hat{\chi}(x) \not= \hat{\chi}(y)$. This allows us to embed the whole equivalence class (in a faithful way) into a normal SCS with a single root element (of the corresponding rank). For different equivalence classes we can use different normal layers as described above. It is clear from this construction that any normal extension must contain what we have obtained from this procedure. So this gives us a unique minimal normal extension. 
\\
(d)
It remains to show: In any layer (normal SCS with a single root element) the number of root elements of a sub-SCS is always finite. We can think of the layer as $\Lambda_k$ and make use of its representation as $\Nset^k_0$, as explained in Section 5.
We show that there is no infinite sequence $\big(m^{(j)} = (m^{(j)}_1,\ldots,m^{(j)}_k)\big)^\infty_{j=1} \subset \Nset^k_0$ with the property that whenever $i<j$ then there exists $p_{ij} \in \{1,\ldots,k\}$
so that $m^{(i)}_{p_{ij}} > m^{(j)}_{p_{ij}}$.
Proof by induction in $k$. The case $k=1$ is obvious. Suppose the claim is true for $k-1$ but, to get a contradiction, that there is such an infinite sequence for $k$. Then there is an infinite subsequence
indexed by $(j_\ell)$ so that $p_{1j_\ell}=:p$ is constant and also $m^{(j_\ell)}_p =:m$ is constant. If we remove the $p$-th entry $m$ from this subsequence we obtain an infinite sequence of $(k-1)$-tuples satisfying the property. This is a contradiction. 
\end{proof}

The corresponding extension problem for SCHs (instead of SCSs) is related to the braidability problem mentioned in the introduction (see \cite{GK09} for the original formulation) because normal SCHs carry braid group representations, by Theorem \ref{thm:label}(b). But the extension problem for SCHs presents additional difficulties because in an SCH not all vectors have labels, they are just linear combinations of vectors in labeled subspaces. So far we have not been able to fully solve the problem: Does any SCH always have a normal extension?

\section{Spreadability with respect to an inner product}

The study of SCHs is motivated by the study of spreadable sequences in (non-commutative) probability theory, see the introduction and \cite{EGK17}. We can make this more explicit by introducing the corresponding Hilbert space version of spreadability. As we will see this is also a very concrete way of thinking about (parts of) SCHs arising from roots of level $0$.
We say that a sequence $(x_n)_{n \ge 0}$ of unit vectors in a Hilbert space is spreadable with respect to the inner product if whenever $i < j$ then $\langle x_j, x_i \rangle$ does not depend on $i$ and $j$. Intuitively this provides a geometric figure with a certain symmetry based on some fixed angle. It is useful to generalize this idea to a higher dimensional version.

\begin{definition} 
Let $\CK, \CH$ be Hilbert spaces. A sequence $(\iota_n)_{n \ge 0}$ of isometries from $\CK$ into $\CH$ is called spreadable (with respect to the inner product) if $\iota_j^* \iota_i = C$ if $i < j$
(independent of $i,j$). Here $C: \CK \rightarrow \CK$ is a contraction which we call the operator angle of the spreadable sequence $(\iota_n)_{n \ge 0}$.
\end{definition}

Note that if we put $\CK := \Cset$ and $x_n := \iota_n(1)$ for all $n$ then we recover the notion of a spreadable sequence of (unit) vectors. 

It is immediate how this is related to SCHs.
If $(\iota_n)_{n \ge 0}$ is a spreadable sequence of isometries from $\CK$ into $\CH$
then we can define $\CH_k$ to be the closed linear span of the $\iota_\ell(\CK)$ with $\ell \le k$ and 
\[
\alpha_n(x) := 
\left\{
\begin{array}{ll}
x & \mbox{if } x \in \iota_\ell(\CK),\; \ell < n \\
\iota_{\ell+1} \,\iota_\ell^*(x) & \mbox{if } 
x \in \iota_\ell(\CK), \;\ell \ge n
\end{array}
\right.
\]
and extend linearly. In fact, if $x = \sum \iota_\ell(x_\ell)$ with $x_\ell \in \CK$ then $\alpha_n(x) = \sum_{\ell < n} \iota_\ell(x_\ell) + \sum_{\ell \ge n} \iota_{\ell+1}(x_\ell)$ and from spreadability we obtain $\| \alpha_n(x)\| = \|x\|$. This shows that $\alpha_n$ is a well defined isometry on $\CH_\infty$ (for all $n \in \Nset_0$). Similarly it can be checked that the $\alpha_n$ are partial shifts. We obtain a corresponding SCH with $\CH_0 = \iota_0(\CK)$ as roots of level $0$ and no other roots of higher level. We call this the minimal SCH for the spreadable sequence. 

Conversely, if we have any SCH then we can define $\CK := \CH_0$ and $\iota_n := \alpha_0^n|_{\CH_0}$ and obtain a spreadable sequence. In fact, if $i<j$ and $x,y \in \CK$ then we get (using $\alpha_1 \alpha_0 = \alpha_0 \alpha_0$ and $\alpha_1(y)=y$)
\begin{align*}
& \langle x, \iota_j^* \iota_i y \rangle  = \langle \iota_j x, \iota_i y \rangle = \langle \alpha_0^j(x), \alpha_0^i(y) \rangle = \langle \alpha_0^{j-i-1} \alpha_0(x), y \rangle \\
& = \langle \alpha_1^{j-i-1} \alpha_0(x), \alpha_1^{j-i-1}(y) \rangle = \langle \alpha_0(x), y \rangle = \langle \iota_1(x), \iota_0(y) \rangle = \langle x, \iota_1^* \iota_0 y \rangle.
\end{align*}
which proves spreadability. This is analogous to the 'constructive procedure' to produce spreadable random variables discussed in \cite{GK12}.

Together this shows that the theory of spreadability with respect to an inner product is just a concrete version of the theory of SCHs where all roots have level $0$ (other parts of an SCH are ignored by this approach). 
\\

We give an example where everything can be easily computed explicitly.
\\
Consider on the Hilbert space 
$\CH := \ell^2 (n \in \Zset: -1 \le n < \infty)$
the partial shifts $(\alpha_n)_{n\in \Nset_0}$
given by
\[
\alpha_n (\beta_{-1}, \beta_0, \ldots, \beta_{n-1}, \beta_n, \ldots) = (\beta_{-1}, \beta_0, \ldots, \beta_{n-1}, 0, \beta_n, \ldots).
\]
We obtain an SCH (with all roots in level $0$)
by choosing a subspace $\CH_0 \subset \CH^{\alpha_1} = \{ (\beta_{-1}, \beta_0, 0, 0, \ldots) \}$ and then $\CH_k$ as the closed linear span of the subspaces $\alpha_0^\ell \CH_0$ with $\ell \le k$ (which is contained in $\CH^{\alpha_{k+1}} = \{ (\beta_{-1}, \beta_0, \ldots, \beta_k, 0, 0, \ldots) \}$).
Let us choose $\CH_0 := \Cset x_0$ with
$x_0 := \frac{1}{\sqrt{2}} (1,1,0,0, \ldots)$.
Then with $\CK := \CH_0$ we obtain a spreadable sequence $(x_n)_{n \in \Nset_0}$
where $x_n := \alpha_0^n(x_0)$. We have 
$x_1 = \frac{1}{\sqrt{2}} (1,0,1,0,0, \ldots)$, hence $C = \langle x_1, x_0 \rangle = \frac12$.
Note that $\frac{1}{n+1} (x_0 + x_1 + \ldots +
x_n) \to \frac{1}{\sqrt{2}} (1,0,0,0, \ldots) =: x_\infty$ for $n \to \infty$ and we conclude that $\CH_\infty$ contains the canonical orthonormal basis of $\ell^2 (n \in \Zset: -1 \le n < \infty)$ and hence
$\CH_\infty = \CH$ which is strictly larger than the union of all the $\CH_k = \spn(x_0, \ldots,x_k)$.
If we want an augmentation $\CH_{-1}$ then we need both $\CH_{-1} \subset \CH_0 = \Cset x_0$ and $\CH_{-1} \subset \CH^{\alpha_0} = \Cset x_\infty$ which forces $\CH_{-1} = \{0\}$.

Let us explicitly compute the innovation spaces:
\begin{align*}
\CD_{-1} & = \CH_{-1} = \{0\}, \\
\CD_0 & = \spn(x_9) = \Cset (1,1,0, \ldots) \\
\CD_1 & = \spn(x_0,x_1) \ominus \spn(x_0) = \Cset (1,-1,2,0,\ldots), \\
\CD_2 & = \spn(x_0,x_1,x_2) \ominus \spn(x_0,x_1) = \Cset (1,-1,-1,3,0,\ldots), \\
\ldots \\
\CD_k & = \spn(x_9,\ldots,x_k) \ominus \spn(x_0,\ldots,x_{k=1}) = \Cset (1,-1,\ldots,-1,k+1,0,\ldots). \\
\end{align*}
This SCH is not saturated and we can see that for $(1,-1,2,0,\ldots) \in \CD_1$ we have
\begin{align*}
& \alpha_0 (1,-1,2,0,\ldots) = (1,0,-1,2,0,\ldots) \\
&= \frac12 (1,1,0,\ldots) - \frac16 (1,-1,2,0,\ldots) + \frac23 (1,-1,-1,3,0,\ldots) \in \CD_0 \oplus \CD_1 \oplus \CD_2 
\end{align*}
with nontrivial components in all three innovation spaces.
\\

Back to the general case of a spreadable sequence $(\iota_n)_{n \ge 0}$ of isometries from $\CK$ into $\CH$. By making use of the saturation we can give a structure theorem for these which is actually a kind of corollary to the 'toy de Finetti' Theorem \ref{T:saturation} and shows that spreadable sequences (with respect to an inner product) are 'conditionally orthogonal' (in the sense explained below), completely analogous to the well known statements about conditional independence of spreadable random variables in the probabilistic de Finetti theorem. For some background on this see \cite{Ko10} and \cite{EGK17}. 

\begin{theorem}\label{C}
Let $(\iota_n)_{n \ge 0}$ be a spreadable sequence of isometries from $\CK$ into $\CH$
and let $Q$ be the orthogonal projection from $\CH_\infty$ to $\CH_\infty^{\alpha_0}$ (for the corresponding SCH). 
Then $\big( (\1 - Q) \iota_n(\CK) \big)_{n \in \Nset_0}$ is a sequence of orthogonal spaces.
\\
For the operator angle we have $C = \iota_1^* Q \iota_0 = \iota_0^* Q \iota_0 \ge 0$.
If the corresponding minimal SCH is saturated then this is an orthogonal projection. In general all positive contractions $C: \CK \rightarrow \CK$ can appear as operator angles. 
\end{theorem}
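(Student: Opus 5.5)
The plan is to pass to the saturation of the minimal SCH and then read everything off Theorem \ref{T:saturation}. The ambient space $\CH_\infty$ and the partial shifts are unchanged by saturation, and $Q$ is the projection onto $\hat{\CH}_{-1}=\CH_\infty^{\alpha_0}$. I work throughout with $\iota_n=\alpha_0^n\iota_0$, where $\iota_0(\CK)=\CH_0\subset\hat{\CH}_0=\CH_\infty^{\alpha_1}$; this identity holds by construction of the minimal SCH.

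\emph{Orthogonality.} Since $\iota_0(\CK)\subset\hat{\CH}_0=\hat{\CD}_{-1}\oplus\hat{\CD}_0$, we have $(\1-Q)\iota_0(\CK)\subset\hat{\CD}_0$. Lemma \ref{L:cosimplicial_ids_for_proj} gives $\alpha_0 Q=\alpha_0\hat P_{-1}=\hat P_0\alpha_0$. Since $Q$ projects onto the fixed points of $\alpha_0$, the mean ergodic formula also shows $Q\alpha_0=Q$. Hence $(\1-Q)\alpha_0=\alpha_0(\1-Q)$ on $\CH_\infty$, because $\alpha_0Q=Q$. Iterating gives $(\1-Q)\iota_n(\CK)=\alpha_0^n(\1-Q)\iota_0(\CK)$. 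The saturation is saturated, so Theorem \ref{T:saturation}(2) gives $\alpha_0(\hat{\CD}_k)\subset\hat{\CD}_{k+1}$. Therefore $(\1-Q)\iota_n(\CK)\subset\hat{\CD}_n$, and these innovation spaces are mutually orthogonal.

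\emph{Operator angle.} For $i<j$, write $\iota_i=Q\iota_i+(\1-Q)\iota_i$. The cross terms vanish because the two ranges are orthogonal, and $\langle(\1-Q)\iota_jx,(\1-Q)\iota_iy\rangle=0$ by the first part. Since $Q\iota_n=Q\alpha_0^n\iota_0=Q\iota_0$, we get $C=\iota_j^*\iota_i=\iota_0^*Q\iota_0=\iota_1^*Q\iota_0$, and this is positive.

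\emph{Saturated case.} If the minimal SCH is saturated, then $Q=P_{-1}$. The minimal SCH has $\CH_{-1}\subset\CH_0=\iota_0(\CK)$, so $\iota_0^*P_{-1}\iota_0$ is unitarily conjugate, via the unitary $\iota_0:\CK\to\CH_0$, to the projection onto $\CH_{-1}$ inside $\CH_0$. Hence $C$ is an orthogonal projection.

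\emph{Realizing every positive contraction.} Given $0\le C\le\1$ on $\CK$, I would build an explicit model generalizing the $\ell^2$ example. Take $\CH=\CK\oplus\bigoplus_{n\ge0}\CK$ and set $\iota_n(x)=(C^{1/2}x,\,0,\dots,0,(\1-C)^{1/2}x,0,\dots)$, with the second component in slot $n$. Then $\iota_n^*\iota_n=C+(\1-C)=\1$ and $\iota_j^*\iota_i=C$ for $i\ne j$, so the sequence is spreadable with angle $C$.

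The main obstacle I expect is the commutation $(\1-Q)\alpha_0=\alpha_0(\1-Q)$, and more precisely the check that $Q\alpha_0=Q$ and $\alpha_0Q=Q$. Both follow from $Q=\solim\frac1N\sum_m\alpha_0^m$ and from $Q$ projecting onto fixed points. I would verify this carefully rather than rely on Lemma \ref{L:cosimplicial_ids_for_proj} alone.
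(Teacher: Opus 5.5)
Your proposal is correct and follows essentially the same route as the paper. You pass to the saturation, use $(\1-Q)\iota_0(\CK)\subset\hat{\CD}_0$ together with Theorem~\ref{T:saturation} to push this into $\hat{\CD}_n$ under $\alpha_0^n$; your commutation $(\1-Q)\alpha_0=\alpha_0(\1-Q)$ is just the paper's observation that the $\hat{\CD}_{-1}$-component is fixed by $\alpha_0$. You then read off $C=\iota_0^*Q\iota_0$ and realize an arbitrary $0\le C\le\1$ by the same $\sqrt{C}\oplus\sqrt{\1-C}$ model. The only cosmetic difference is in the saturated case: you use $\CH_{-1}=\CH_\infty^{\alpha_0}\subset\CH_0$, whereas the paper uses $\hat{\CH}_{-1}\subset\hat{\CH}_0=\CH_0$, and both work.
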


\begin{proof}
For the corresponding minimal SCH we have
\[
\iota_0(\CK) = \CH_0 \subset \hat{\CH}_0 = \hat{\CD}_{-1} \oplus \hat{\CD}_0,
\]
where the hat indicates the saturation. Note that $\hat{\CD}_{-1} = \CH_\infty^{\alpha_0}$
and $Q$ is the orthogonal projection to this subspace. Hence $(\1 - Q) \iota_0(\CK) \subset \hat{\CD}_0$. 
Hence if for $ x \in \CK$ we write $\iota_0(x) = x_{-1} \oplus x_0 \in \hat{\CD}_{-1} \oplus \hat{\CD}_0$ then
\[
\iota_n(x) = \alpha^n_0 \iota_0(x) = x_{-1} \oplus \alpha^n_0(x_0) \in \hat{\CD}_{-1} \oplus \hat{\CD}_n, 
\]
by Theorem \ref{T:saturation}
. So
\[
(\1 - Q) \iota_n(\CK) \subset \hat{\CD}_n.
\]
This proves the orthogonality of this sequence of subspaces. 
For $C = \iota^*_1 \iota_0$ we find
\[
\langle x, Cx \rangle = \langle \iota_1(x), \iota_0(x) \rangle = \langle x_{-1}, x_{-1} \rangle
= \langle \iota_1(x), Q \iota_0(x) \rangle = \langle \iota_0(x), Q \iota_0(x) \rangle
\]
and we find $C = \iota^*_1 Q \iota_0 = \iota^*_0 Q \iota_0 \ge 0$.

In the saturated case $\hat{\CD}_{-1}$ is contained in $\CH_0\; (= \hat{\CH}_0$ in this case) and so $C$ is nothing but the orthogonal projection onto
$\iota_0^* (\hat{\CD}_{-1})$.

Finally let us start with any contraction $C \ge 0$ on $\CK$ and construct a spreadable sequence with $C$ as the operator angle. Let $\CH := \bigoplus_{k=-1}^\infty \CK^{(k)}$, an infinite direct sum of copies of $\CK$, further define (for all $n \in \Nset_0$)
\begin{align*}
\iota_n: \CK & \rightarrow \CK^{(-1)} \;\oplus\; \CK^{(n)} \quad \subset \CH \\
x & \mapsto \sqrt{C}\,x \;\oplus\; \sqrt{\1-C}\,x.
\end{align*}
It can be verified that indeed $\iota_j^* \iota_i = C$ whenever $i \not= j$.
\end{proof}

\begin{corollary}
For a spreadable sequence $(x_n)_{n \ge 0}$ of unit vectors in a Hilbert space there exists $0 \le c \le 1$ such that $\langle x_j, x_i \rangle = c$ whenever $i \not= j$, and all such $c$ can be realized.
\end{corollary}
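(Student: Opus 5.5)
This is the special case $\CK := \Cset$ of Theorem \ref{C}. Given a spreadable sequence $(x_n)_{n \ge 0}$ of unit vectors, I set $\iota_n(\lambda) := \lambda x_n$. Each $\iota_n$ is an isometry from $\Cset$ into $\CH$, because $\|x_n\| = 1$. Moreover $\iota_j^* \iota_i$ is multiplication by $\langle x_i, x_j\rangle$ (or its conjugate, depending on the convention, which is irrelevant here). Hence spreadability of the vectors, namely that $\langle x_j, x_i\rangle$ is constant for $i<j$, says exactly that $(\iota_n)$ is a spreadable sequence of isometries. Its operator angle $C$ is then a scalar $c \in \Cset$ with $\langle x_j, x_i \rangle = c$ for all $i<j$.

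Theorem \ref{C} gives $C = \iota_0^* Q \iota_0 \ge 0$. For $\CK = \Cset$ this means $c = \langle x_0, Q x_0\rangle = \|Qx_0\|^2$, so $c$ is real and nonnegative. Since $C$ is a contraction, $c \le 1$, which can also be read off from Cauchy--Schwarz. Because $c$ is real, for $i > j$ we get $\langle x_j, x_i \rangle = \overline{\langle x_i, x_j\rangle} = \bar c = c$. This gives $\langle x_j, x_i\rangle = c$ whenever $i \ne j$, and the only point needing care is exactly this step of using the realness of $c$ to pass to the symmetric statement.

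For realizability, I take the construction at the end of the proof of Theorem \ref{C} with $\CK = \Cset$ and $C = c \in [0,1]$. Let $x_n := \sqrt{c}\, e_{-1} + \sqrt{1-c}\, e_n$ in $\ell^2(\{-1,0,1,\ldots\})$. These are unit vectors, and for $i \ne j$ we have $\langle x_j, x_i\rangle = c$, so the sequence is spreadable with constant $c$.
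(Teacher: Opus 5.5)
Your proposal is correct and follows the paper's route: the corollary is Theorem \ref{C} specialised to $\CK=\Cset$. Positivity of $C=\iota_0^*Q\iota_0$ gives $c=\|Qx_0\|^2\in[0,1]$, and the construction $x_n=\sqrt{c}\,e_{-1}+\sqrt{1-c}\,e_n$ gives realizability. Your explicit use of the realness of $c$ to pass from $i<j$ to all $i\neq j$ spells out a step the paper leaves implicit.
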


The contraction $C: \CK \rightarrow \CK$ is a complete invariant for spreadable sequences (wrt an inner product) in the following sense: Two spreadable sequences $(\iota_{i})_{i=0}^\infty \colon \CK \to \CH$ with contraction $C$ and 
$(\tilde{\iota}_{i})_{i=0}^\infty \colon \tilde{\CK} \to \tilde{\CH}$ with contraction $\tilde{C}$ are unitarily equivalent (that is, there exists an intertwining unitary $U_\infty \colon \CH_\infty \to \tilde{\CH}_\infty$ between the $\iota_n$ and $\tilde{\iota}_n$ and hence between the minimal SCHs)
if and only if $\CK$ and $\tilde{\CK}$ have the same dimension and, for the unitary $U \colon \CK \to \tilde{\CK}$ determined by $\tilde{\iota}_0 U = \iota_0$, we have $\tilde{C} U = U C$. In fact, by Theorem \ref{C} the contraction $C$ determines all the relevant inner products between the $\iota_n(\CK)$. 

\subsection*{Acknowledgements}
Conversations with Edwin Beggs at the Isaac Newton Institute Operator Algebras: subfactors and applications programme in 2017 inspired the analysis of the cohomology of SCHs. We thank both Edwin and the organisers of the programme. We are also grateful to Zachary Bufton 
for joint discussions.

\end{document}